\documentclass[12pt,psamsfonts]{amsart}
\usepackage{amssymb,amsmath,amscd,eucal,epsfig}
\usepackage{tikz}

\textheight = 8.39in
 \textwidth = 6.5in
\oddsidemargin=0.3in
\evensidemargin=0.3in
 \topmargin=0.0in
 \headsep = 0.35in
 \headheight = 0.17in
 \topmargin = 0.38in
 \topskip=0.14in
 \footskip=0.42in

 \def\Dj{\hbox{D\kern-.73em\raise.30ex\hbox{-} \raise-.30ex\hbox{}}}
 \def\dj{\hbox{d\kern-.33em\raise.80ex\hbox{-} \raise-.80ex\hbox{\kern-.40em}}}

\def\id{\operatorname{id}}
\def\im{\operatorname{im}}
\def\ad{\operatorname{ad}}
\def\tr{\operatorname{tr}}
\def\Ric{\operatorname{Ric}}
\def\ric{\operatorname{ric}}        
\def\rk{\operatorname{rk}}          
\def\Span{\operatorname{Span}}      
\def\es{\emptyset}
\def\sq{\subseteq}
\def\<{\langle}                     
\def\>{\rangle}                     
\def\ip{\<\cdot,\cdot\>}            

\def\N{\mathbb N}
\def\Z{\mathbb Z}
\def\R{\mathbb R}
\def\Q{\mathbb Q}
\newtheorem{thm}{Theorem}[section]
\newtheorem{prop}[thm]{Proposition}
\newtheorem{lem}[thm]{Lemma}
\newtheorem{cor}[thm]{Corollary}
\newtheorem{rem}[thm]{Remark}
\newcommand{\ben}{\begin{enumerate}}
\newcommand{\een}{\end{enumerate}}

\renewcommand{\theenumi}{\alph{enumi}}
\renewcommand{\labelenumi}{(\theenumi)}
\renewcommand{\theenumii}{\roman{enumii}}

\theoremstyle{plain}

\theoremstyle{definition}

\numberwithin{equation}{section}

\begin{document}
\setcounter{page}{1}


\title[Labeling the Product of Paths and Cycles]{Distance two labeling of Direct Product of Paths and Cycles}


\author[D.O. Ajayi]{Deborah Olayide Ajayi$^1$}
\address{$^1$Department of Mathematics,
\newline \indent University of Ibadan,
\newline \indent Ibadan,
\newline \indent Nigeria}
\email{olayide.ajayi@mail.ui.edu.ng; adelaideajayi@yahoo.com}

\author[T.C Adefokun]{Tayo Charles Adefokun$^2$ }
\address{$^2$Department of Computer and Mathematical Sciences,
\newline \indent Crawford University,
\newline \indent Nigeria}
\email{tayoadefokun@crawforduniversity.edu.ng; tayo.adefokun@gmail.com}

\keywords{L(1,1)-labeling, D-2 Coloring, Direct Product of Graphs, Cross Product of Graphs, Paths and Cycles. \\
\indent 2010 {\it Mathematics Subject Classification}. Primary: 05C78}

\begin{abstract} Suppose that $[n]=\left\{0,1,2,...,n\right\}$ is a set of non-negative integers and $h,k \in [n]$. The $L(h,k)$-labeling of graph $G$ is the function
$l:V(G)\rightarrow[n]$ such that $\left|l(u)-l(v)\right|\geq h$ if the distance $d(u,v)$ between $u$ and $v$ is one and $\left|l(u)-l(v)\right| \geq k$ if the distance $d(u,v)$ is two. Let $L(V(G))=\left\{l(v): v \in V(G)\right\}$ and let $p$ be the maximum value of $L(V(G)).$ Then $p$ is called $\lambda_h^k-$number of $G$ if $p$ is the least possible member of $[n]$ such that $G$ maintains an $L(h,k)-$labeling. In this paper, we establish $\lambda_1^1-$ numbers of $P _m \times C_n$ graphs for all $m \geq 2$ and $n\geq 3$.
\end{abstract}

\maketitle

\section{Introduction}
Let $l:V(G)\rightarrow [n]=$ $\left\{0,1,2,\cdots, n\right\}$ be  a non negative function on the vertex set $V(G)$ of a graph $G$. Given any two fixed non-negative integers $h,k$, the $L(h,k)$-labeling (or distance two labeling) of $G$ is defined such that for any edge $uv \in E(G)$, $\left|l(u)-l(v)\right| \geq h$ and if the distance $d(u,v)$ is two for  $ u,v \in V(G),$ then $\left|l(u)-l(v)\right|\geq k.$ The aim of $L(h,k)-$labeling is to obtain the smallest non negative integer $\lambda_h^k(G),$ such that there exists an $L(h,k)$-labeling of $G$ with no $l(v) \in L(V(G))$ greater than $\lambda_h^k(G)$, where $L(V(G))$ is the set of all labels on $V(G)$.

In \cite{GY10}, Griggs and Yeh showed that any graph $G$ with maximum degree $\Delta > 1$ has $\lambda_2^1(G)\leq \Delta^2+2\Delta$ and went further to put forward a conjecture that $\lambda_2^1(G) \leq \Delta ^2.$ Chang and Kuo, in \cite{CK4} improved on Griggs and Yeh's bound by showing that $\lambda_2^1(G) \leq \Delta(\Delta+1) $, Kral' and Skrekovski \cite{KS14} went another step showing that $\lambda_2^1(G) \leq \Delta(\Delta+1)-1$ while Goncalves in \cite{G8} proved that $\lambda_2^1 (G) \leq \Delta(\Delta+1)-2$. The interest in the Griggs-Yeh conjecture and in improving on the existing bounds have inspired a lot of work in the direction of $L(h,k)$-labeling, mostly on $h=2$, $k=1$. (See \cite{CK4}\cite{CKKL5}\cite{GM7}\cite{GKM9}\cite{S16}.) (An extensive review of all known results on $L(h,k)-$labeling can be seen in \cite{C2}.) It is obvious that $L(2,1)-$labeling is an $L(1,1)-$labeling, therefore results on $L(2,1)$-labeling provide upper bound for $L(1,1)$-labeling of graphs and $$\lambda_2^1(G)+1 \geq \lambda_1^1(G)+1=\lambda(G^2)$$ where $\lambda(G^2)$ is the chromatic number of the square of $G$.

 Suppose that $G$ and $H$ are graphs. The Cartesian product and the direct product of $G$ and $H$, $G\square H$ and $G\times H$ respectively, have vertex set $V(G) \times V(H)$, while the edge sets are
 $E(G\square H)$= $\left\{((x_1,x_2),(y_1,y_2)): (x_1,y_1) \in E(G) \; \textrm {and} \; x_2=y_2 \; \textrm {or} \; \right. \\ \left.(x_2,y_2) \in E(H)  \; \textrm {and} \; x_1=y_1\right\} $ and
 $E(G \times H)= \left\{((x_1, x_2), (y_1,y_2)):(x_1,y_1)\in E(G)\;\right. \\ \left.\textrm {and} \;  (x_2,y_2) \in E(H)\right\}$ respectively.

The $L(h,k)-$labeling of the Cartesian product $G\square H$ has been extensively investigated with $\lambda_h^k(G\square H)$ obtained for various types of graphs $G$ and $H$, while numerous upper and lower bounds have been suggested (see \cite{GMS6}\cite{CY8}\cite{KY15}\cite{SS17}\cite{WP19}\cite{WGM20}). Most of the work on $L(h,k)$ labeling consider $h=2$ and $k=1$; although Chiang and Yan in \cite{CY8} and Georges and Mauro in \cite{GM7} worked on the $L(1,1)$labeling of Cartesian products of paths and cycles and Sopena and Wu in \cite{SW18} worked on Cartesisn products of cycles. In case of direct product graphs, Jha et al \cite{JKV13}, established $\lambda_2^1(C_m \times C_n)$ for some values of $m$ and $n$.

In this paper, we determine $\lambda_1^1(P_m \times P_n)$ and $\lambda_1^1(P_m \times C_n)$ where $P_m$ and $P_n$ are paths of length $m-1$ and $n-1$ respectively and $C_n$ is a cycle of length $n$ for all $m,n \geq 2$. We also deduce $\lambda_1^1(C_m \times C_n)$ for $m,n\equiv 0\mod5$. Thus, we extend the results in \cite{GM7} and \cite{CY8} to direct product graphs among other results.

\section{Preliminaries}
The following results and definitions are necessary.\\
Let $m$ be a non-negative integer. $P_m=u_0u_1u_2...u_{m-1}$ is a path of length $m-1$, where $u_i\in V(P_m),$ for all $i\in[m-1]$;
$C_m=u_0u_1u_2...u_{m-1}u_0$ is a cycle of length $m$, where $u_i\in V(C_m),$ for all $i\in[m-1]$. Let $v\in V(G)$, we denote by $l(v)$ the label on $v$ and let $U \subseteq V(G)$. Then $L(U)$ is a set of labels on $U$.

Suppose $P_m \times P_n$ is a direct product paths and $G'$ is a component of $P_m \times P_n$. Then \\
$U_j=\left\{u_iv_j\right\}\subset V(G')$, for some $j\in [n-1]$, and for all $i\in [(m-1)(\epsilon)]$ or for all $i \in [(m-1)(o)].$
\\
$V_i=\left\{u_iv_j\right\}\subset V(G')$, for some $i\in [m-1]$, and for all $j\in [(n-1)(\epsilon)]$ or for all $j \in [(n-1)(o)].$
\begin{thm} \label{a}\cite{IK12} Graph $G\times H$ is connected if and only if $G$ and $H$ are connected and at least one of $G$ and $H$ is non-bipartite.
\end{thm}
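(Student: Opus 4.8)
The plan is to prove the two implications separately, using throughout the elementary observation that a pair of walks of \emph{equal length}, one in $G$ and one in $H$, assembles coordinatewise into a single walk in $G\times H$, and conversely that projecting a walk in $G\times H$ yields walks of equal length in the two factors. Throughout I would read the statement for graphs with at least one edge, since $G\times K_1$ is a disjoint union of isolated vertices and the projection argument below already flags that degenerate case.

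For the forward direction, suppose $G\times H$ is connected. First I would note that the coordinate projections $\pi_G\colon V(G\times H)\to V(G)$ and $\pi_H\colon V(G\times H)\to V(H)$ carry edges to edges, so each maps the connected graph $G\times H$ onto a connected subgraph; since the projections are surjective, $G$ and $H$ are both connected. Next, to force one factor to be non-bipartite, argue by contradiction: if $G$ and $H$ are both bipartite, fix proper $2$-colourings $c_G\colon V(G)\to\{0,1\}$ and $c_H\colon V(H)\to\{0,1\}$ and set $f(x,y)=c_G(x)+c_H(y)\bmod 2$. Along any edge of $G\times H$ both coordinates change colour, so $f(x_1,x_2)+f(y_1,y_2)\equiv 1+1\equiv 0$, i.e.\ $f$ is constant on each connected component; but $f$ attains both values once each factor has an edge, so $G\times H$ would be disconnected, a contradiction.

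For the reverse direction, assume $G$ and $H$ are connected and, without loss of generality, $G$ is non-bipartite. The crux is a length-flexibility lemma I would establish first: in a connected non-bipartite graph, for any two vertices $u,v$ there is an $N$ such that a walk of length $n$ from $u$ to $v$ exists for every $n\ge N$. Its proof uses that $G$ has an odd closed walk and a closed walk of length $2$ at $u$; pre-pending copies of these to a fixed $u$--$v$ walk realises every large length, because the numerical semigroup generated by $2$ and an odd number contains all sufficiently large integers. Granting the lemma, take $(a,b),(c,d)\in V(G\times H)$: $G$ supplies an $a$--$c$ walk of every large length $n$, and the connected graph $H$ supplies a $b$--$d$ walk of some length $n_0$ and hence, by back-and-forth padding, of every length $n_0,n_0+2,n_0+4,\dots$; choosing $n$ large of the correct parity matches the two lengths, and the combined walk joins $(a,b)$ to $(c,d)$ in $G\times H$.

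I expect the main obstacle to be the length-flexibility lemma, and within it the bookkeeping of parities: a connected \emph{bipartite} $H$ admits walks of only one parity between a given pair of vertices, so it is essential that the non-bipartite factor $G$ deliver walks of \emph{both} parities for all large lengths — this is exactly what the odd closed walk provides. Once that lemma is in hand, the assembly of the joint walk, and hence the connectedness of $G\times H$, is routine.
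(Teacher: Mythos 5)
Your proof is correct, and it is essentially the classical argument: the paper itself gives no proof of this statement (it is quoted as a known result of Weichsel, the reference \cite{IK12}), and your three ingredients --- edge-preserving projections for connectedness of the factors, the parity $2$-colouring $c_G(x)+c_H(y)\bmod 2$ to rule out two bipartite factors, and the walk-length/parity matching via an odd closed walk for sufficiency --- are exactly the standard route used in that source. There is nothing to reconcile; the only points worth making explicit in a final write-up are the degenerate cases you already flag (a factor with no edges) and the fact that the odd closed walk can be based at any vertex of the connected non-bipartite factor.
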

\begin{rem} \label{b} \begin{itemize} \item[(i)] \rm {Since $P_m$ is bipartite for all $m \geq 2$, then for $P_m \times P_n $, there exist $G_1 \subset P_m \times P_n$ and $G_2 \subset P_m \times P_n$ such that $G_1$ and $G_2$ are components of $P_m \times P_n$}.
\item[(ii)] \rm{From Theorem \ref{a} and the Remark above, it is clear that $P_m \times P_n$ is not a connected graph. Suppose $P_m=u_0u_1u_2...u_{m-1}$ and $P_n=v_0v_1v_2...v_{n-1},$ then \\
$V(G_1)=\left\{u_i,v_j: i \in [(m-1)(\epsilon)], j \in [(n-1)(\epsilon)] \; \textrm {or} \; i \in [(m-1)(o)]; j\in (n-1)[o]\right\}$\\
 $V(G_2)=\left\{u_i,v_j: i \in [(m-1)(\epsilon)], j \in [(n-1)(o)] \; \textrm {or} \; i \in [(m-1)(o)]; j\in (n-1)[\epsilon]\right\}.$}
\item[(iii)] \rm{Suppose $G$ is a graph such that $G=G'\cup G''$, where $G', G''$ are components of G, then, $\lambda_1^1(G)=max\left\{\lambda_1^1(G'),\lambda_1^1(G'')\right\}.$}
\item[(iv)] \rm{For a direct product graph, $P_m \times P_2$, $m \geq 2,$ its components $G_1$ and $G_2$ are paths $P'_m$ and $P''_m$ respectively such that\\ $P'_m= u_0v_0u_1v_1u_2v_0...u_{m-1}v_1(u_{m-1}v_0)$ (if $m$ is even) and\\ $P''_m=u_0v_1u_1v_0u_2v_1...u_{m-1}v_0(u_{m-1}v_1)$ (if $m$ is odd)}.
\end{itemize}
\end{rem}

The following are known results for $L(1,1)$-labeling of paths, cycles and $L(h,k)$-labeling of stars, $k \leq h$.

\begin{lem}\label{e} \cite{BB1} Let $P_m$ be a path of length $m-1$. $\lambda_1^1(P_m)=1,$ for $m=2$ and $\lambda_1^1(P_m)=2$ \rm for all $m \geq 3.$
\end{lem}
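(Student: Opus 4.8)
The plan is to prove matching lower and upper bounds for $\lambda_1^1(P_m)$ in the two regimes $m=2$ and $m\geq 3$; recall that an $L(1,1)$-labeling is exactly a proper coloring of the square graph, so this amounts to computing $\chi(P_m^2)$.

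For the lower bounds I would argue as follows. If $m=2$, then $P_2$ is the single edge $u_0u_1$, and an $L(1,1)$-labeling requires $l(u_0)\neq l(u_1)$, so at least the two labels $0,1$ are needed and $\lambda_1^1(P_2)\geq 1$. If $m\geq 3$, consider the three consecutive vertices $u_0,u_1,u_2$: here $d(u_0,u_1)=d(u_1,u_2)=1$ and $d(u_0,u_2)=2$, so every pair among them is at distance one or two, and the three vertices must receive three distinct labels; hence the label set contains $\{0,1,2\}$ and $\lambda_1^1(P_m)\geq 2$.

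For the upper bounds I would exhibit explicit labelings. When $m=2$, the map $l(u_0)=0$, $l(u_1)=1$ is an $L(1,1)$-labeling (the distance-one condition holds and there are no vertices at distance two), so $\lambda_1^1(P_2)\leq 1$, which together with the lower bound gives $\lambda_1^1(P_2)=1$. When $m\geq 3$, take the periodic labeling $l(u_i)=i\bmod 3$ for $i\in[m-1]$. Two vertices $u_i,u_j$ lie at distance one or two in $P_m$ precisely when $|i-j|\in\{1,2\}$, and in either case $i\not\equiv j\pmod 3$ because neither $1$ nor $2$ is divisible by $3$; hence $l(u_i)\neq l(u_j)$, so $l$ is a valid $L(1,1)$-labeling using only the labels $0,1,2$. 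Therefore $\lambda_1^1(P_m)\leq 2$, and with the lower bound $\lambda_1^1(P_m)=2$ for all $m\geq 3$.

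There is no substantial obstacle in this argument: every distance constraint invoked in the lower bound is genuinely present, and the upper-bound construction is a simple period-three pattern. The only step meriting a moment's attention is verifying that this pattern respects the distance-two condition, which reduces to the observation that $2\not\equiv 0\pmod 3$.
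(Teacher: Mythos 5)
Your argument is correct; the paper itself states this lemma as a known result cited from the literature and gives no proof, and your two-sided argument (pairwise distance constraints on $u_0,u_1,u_2$ for the lower bound, the periodic labeling $l(u_i)=i \bmod 3$ for the upper bound) is exactly the standard proof one would supply. Nothing further is needed.
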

\begin{lem} \label{f} \cite{BB1} Let $C_m$ be cycle of length $m$. Then $\lambda_1^1(C_m)=2$  for $m \equiv 0 \mod 3$ and $\lambda_1^1(C_m)=3$ for $m \; \not\equiv 0 \mod 3.$
\end{lem}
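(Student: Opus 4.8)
The plan is to prove both inequalities directly from the definition, the workhorse being the elementary observation that any three consecutive vertices $u_{i-1},u_i,u_{i+1}$ of $C_m$ are pairwise at distance at most two and hence must receive three pairwise distinct labels under any $L(1,1)$-labeling; in particular this already gives $\lambda_1^1(C_m)\ge 2$ for every $m\ge 3$. (Equivalently, one is computing the chromatic number of $C_m^{2}$, which for $m\ge 6$ is the circulant $C_m(1,2)$ and for $m=3,4,5$ is $K_m$.) I would then treat the cases $m\equiv 0\pmod 3$ and $m\not\equiv 0\pmod 3$ separately, each time pairing a lower bound with an explicit labeling.

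For the lower bound when $m\not\equiv 0\pmod 3$, suppose for contradiction that there is an $L(1,1)$-labeling using only the labels $\{0,1,2\}$. Then each set $\{l(u_i),l(u_{i+1}),l(u_{i+2})\}$ equals $\{0,1,2\}$; comparing the windows based at $i$ and at $i+1$ and cancelling the two common entries forces $l(u_{i+3})=l(u_i)$ for all $i$, so $l$ is periodic with period $3$ around the cycle. Writing $m=3q+r$ with $r\in\{1,2\}$ and $q\ge 1$, periodicity gives $l(u_0)=l(u_{3q})=l(u_{m-r})$, while $u_{m-r}$ lies at distance $r\le 2$ from $u_0$, contradicting validity. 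Hence $\lambda_1^1(C_m)\ge 3$ whenever $m\not\equiv 0\pmod 3$.

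For the upper bounds I would write down explicit labelings. If $m\equiv 0\pmod 3$, the labeling $l(u_i)=i\bmod 3$ is immediately seen to be a valid $L(1,1)$-labeling with largest label $2$, so $\lambda_1^1(C_m)=2$. If $m\not\equiv 0\pmod 3$, write $m=3q+r$: for $r=1$ take $l(u_i)=i\bmod 3$ on $0\le i\le 3q-1$ and set $l(u_{3q})=3$; for $r=2$ and $m\ge 8$ take $l(u_i)=i\bmod 3$ on $0\le i\le 3q-4$ and give the last five vertices $u_{3q-3},\dots,u_{3q+1}$ the labels $3,0,1,2,3$. In each case only the finitely many adjacency and distance-two constraints involving the modified block and the wrap-around seam need checking, and all of them hold, so $\lambda_1^1(C_m)\le 3$. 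The remaining small cases are settled by inspection: $m=4$, where $C_4^{2}=K_4$ forces all four labels $0,1,2,3$, and $m=5$, where $C_5^{2}=K_5$ and four labels are in fact needed; with this well-known exception, the lower bound and the constructions give $\lambda_1^1(C_m)=3$ for $m\not\equiv 0\pmod 3$.

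The one genuine obstacle is the upper-bound construction in the case $m\equiv 2\pmod 3$: a single new label cannot simply be appended, since the two ``leftover'' vertices are themselves adjacent, so a short ad hoc block must be spliced in near the seam and the periodic-to-block and block-to-periodic junctions verified individually; this is also precisely where the exceptional cycle $C_5$ has to be isolated. Everything else reduces to routine checking of the periodic pattern $0,1,2$.
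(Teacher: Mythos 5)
Your argument is sound and complete: the observation that three consecutive vertices of $C_m$ are pairwise within distance two gives $\lambda_1^1(C_m)\geq 2$; the window-comparison argument forces period $3$ and hence rules out a $\{0,1,2\}$-labeling when $m\not\equiv 0 \mod 3$; and your explicit constructions (the periodic pattern $0,1,2$, the single spliced label $3$ when $m\equiv 1$, and the five-vertex block $3,0,1,2,3$ when $m\equiv 2$ and $m\geq 8$) all verify correctly. There is nothing in the paper to compare this against, since the lemma is quoted from the literature without proof. Two remarks. First, you are right to isolate $C_5$: the statement as printed is in fact false for $m=5$, because every two vertices of $C_5$ are within distance two, so all five must receive distinct labels and $\lambda_1^1(C_5)=4$, not $3$; the paper itself silently corrects this later in Lemma \ref{vii}, which adds the clause $m\neq 5$ and the case $\lambda_1^1(C_5)=4$. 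So what you have actually proved is the corrected statement, which is the one the paper really uses. Second, a small slip in wording: for $C_5$ you write that ``four labels are in fact needed,'' but five distinct labels are needed (the largest being $4$); your conclusion that $C_5$ is exceptional is nonetheless correct, and the rest of the proof is unaffected.
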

\begin{lem} \label{g} \cite{CPP3} Let $K_{1, \Delta}$ be a star of order $\Delta +1.$ Then, $\lambda_h^k(K_{1,\Delta})=(\Delta-1)k+h$ if $h \geq k.$
\end{lem}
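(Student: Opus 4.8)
The plan is to establish the two inequalities $\lambda_h^k(K_{1,\Delta})\le (\Delta-1)k+h$ and $\lambda_h^k(K_{1,\Delta})\ge (\Delta-1)k+h$ separately. Throughout I write $w$ for the center of $K_{1,\Delta}$ and $v_1,\dots,v_\Delta$ for its leaves, and recall that $d(w,v_i)=1$ for each $i$ while $d(v_i,v_j)=2$ for all $i\ne j$; so an $L(h,k)$-labeling $l$ must satisfy $|l(w)-l(v_i)|\ge h$ for every $i$ and $|l(v_i)-l(v_j)|\ge k$ for every $i\ne j$.

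For the upper bound I would simply exhibit a labeling: put $l(w)=0$ and $l(v_i)=h+(i-1)k$ for $i=1,\dots,\Delta$. Then $|l(w)-l(v_i)|=h+(i-1)k\ge h$, and for $i<j$ we get $|l(v_i)-l(v_j)|=(j-i)k\ge k$, so this is a valid $L(h,k)$-labeling whose largest label is $h+(\Delta-1)k$; this half uses nothing about $h\ge k$. For the lower bound, let $l$ be an arbitrary $L(h,k)$-labeling with largest used label $p$, and set $c=l(w)$. Every leaf label lies in $[0,c-h]\cup[c+h,p]$; say $a$ of them fall in the lower interval and $b=\Delta-a$ in the upper one. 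Using the elementary fact that $t$ nonnegative integers that are pairwise at least $k$ apart span a range of at least $(t-1)k$, the lower leaves give $c-h\ge (a-1)k$ when $a\ge1$ and the upper ones give $p-(c+h)\ge (b-1)k$ when $b\ge1$, while trivially $c\ge0$ and $p\ge c$. A short case split then finishes it: if $a=0$ then $p\ge c+h+(\Delta-1)k\ge h+(\Delta-1)k$; if $b=0$ then $p\ge c\ge h+(\Delta-1)k$; and if $a,b\ge1$ then $p\ge c+h+(b-1)k\ge \bigl(h+(a-1)k\bigr)+h+(b-1)k=2h+(\Delta-2)k$.

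The one genuine point, and the only place the hypothesis $h\ge k$ is used, is this last case: one must check that $2h+(\Delta-2)k\ge h+(\Delta-1)k$, which is exactly the inequality $h\ge k$. In other words, the hard part is recognizing that spreading the leaves on both sides of the center's label can never beat keeping them all on one side, and precisely under the hypothesis $h\ge k$ this holds; hence in every case $p\ge (\Delta-1)k+h$, matching the construction. (For $h<k$ the balanced split would win instead, so the stated hypothesis is essential rather than cosmetic.)
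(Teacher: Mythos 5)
Your proof is correct and complete, but note that the paper itself offers no proof of this lemma at all: it is quoted as a known result from the cited reference of Calamoneri, Pelc and Petreschi, so there is nothing in the text to compare your argument against. What you supply is a clean, self-contained justification. The upper bound via the explicit labeling $l(w)=0$, $l(v_i)=h+(i-1)k$ is the standard construction, and your lower bound correctly reduces to the observation that the leaf labels must avoid the window $(c-h,c+h)$ around the center's label $c$ and be pairwise $k$-separated, so that $a$ leaves below force $c\ge h+(a-1)k$ and $b$ leaves above force $p\ge c+h+(b-1)k$. The three-way case split is airtight, and you correctly isolate the only use of the hypothesis $h\ge k$ in the mixed case, where $2h+(\Delta-2)k\ge h+(\Delta-1)k$ is exactly $h\ge k$; your closing remark that for $h<k$ a balanced split would do better is also accurate and explains why the hypothesis is not cosmetic. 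In short: the paper buys the statement by citation, while your argument buys it from first principles, and it is a valid proof.
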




Henceforth we refer to direct product graph as \textit {product graph}.

 \section{$L(1,1)$-Labeling of $P_m \times P_n$}
\begin{prop} \label{h} $\lambda^1_1(P_2 \times P_2)=1.$
\end{prop}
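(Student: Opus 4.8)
The plan is to use Remark~\ref{b}(iv), which describes the components of $P_2 \times P_2$ explicitly. Since $m = 2$ is even, the component $G_1$ is the path $P'_2 = u_0v_0\,u_1v_1$, a path on two vertices (a single edge), and similarly $G_2 = u_0v_1\,u_1v_0$ is also a path on two vertices. By Remark~\ref{b}(iii), $\lambda_1^1(P_2 \times P_2) = \max\{\lambda_1^1(G_1), \lambda_1^1(G_2)\}$, so it suffices to compute $\lambda_1^1$ of a path of length $1$.

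First I would invoke Lemma~\ref{e}, which states $\lambda_1^1(P_2) = 1$: on a single edge $xy$ we need $|l(x) - l(y)| \geq 1$, so labels $\{0,1\}$ suffice and are necessary (since a single label $\{0\}$ would force $l(x) = l(y)$, violating the edge condition). Hence $\lambda_1^1(G_1) = \lambda_1^1(G_2) = 1$, and therefore $\lambda_1^1(P_2 \times P_2) = \max\{1,1\} = 1$.

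Alternatively, and perhaps more transparently, one can exhibit the labeling directly: set $l(u_0v_0) = 0$, $l(u_1v_1) = 1$, $l(u_0v_1) = 0$, $l(u_1v_0) = 1$. The edge set of $P_2 \times P_2$ consists of exactly the two edges $u_0v_0\,u_1v_1$ and $u_0v_1\,u_1v_0$ (by the definition of the direct product, since the only edge of $P_2$ is $u_0u_1$), and on each the two endpoints receive labels differing by $1$; there are no pairs of vertices at distance two in either component, so the distance-two condition is vacuous. This shows $\lambda_1^1(P_2 \times P_2) \leq 1$. For the lower bound, $P_2 \times P_2$ contains an edge, so the two endpoints of that edge must receive distinct labels, forcing at least the label $1$ to appear; thus $\lambda_1^1(P_2 \times P_2) \geq 1$.

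There is essentially no obstacle here: the only thing to be careful about is correctly identifying the components via Remark~\ref{b}(iv) and not conflating $P_2 \times P_2$ with a connected graph (it is not, by Theorem~\ref{a}, since $P_2$ is bipartite). Once the component structure is in hand, the result is immediate from Lemma~\ref{e} together with Remark~\ref{b}(iii).
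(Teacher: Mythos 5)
Your proposal is correct and follows essentially the same route as the paper: identify the two components of $P_2 \times P_2$ as single-edge paths via Remark \ref{b}(iv), apply Lemma \ref{e} to each, and combine with Remark \ref{b}(iii). The explicit labeling you add is a fine (if redundant) sanity check.
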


\begin{proof}Clearly, $G$ consists of connected components $P'_2$ and $P''_2$. By Lemma \ref{e}, $\lambda_1^1(P'_2)=\lambda_1^1(P''_2)=1.$
\end{proof}
We extend the graph in Theorem \ref{h} to $m \geq 3.$
\begin{prop} \label{i} For $m \geq 3$, $\lambda_1^1(P_m \times P_2)=2$.
\end{prop}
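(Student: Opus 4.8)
The plan is to use Remark \ref{b}(iv), which tells us exactly what the two components $G_1$ and $G_2$ of $P_m \times P_2$ look like: each is a path on $2m$ vertices (namely $P'_m$ and $P''_m$, of the form $u_0v_0u_1v_1u_2v_0\cdots$). Since $m \geq 3$, each of these component paths has length at least $5 \geq 2$, so in particular it is a path on $\geq 3$ vertices. By Lemma \ref{e}, $\lambda_1^1(P_k) = 2$ for every $k \geq 3$, so $\lambda_1^1(P'_m) = \lambda_1^1(P''_m) = 2$. Finally, by Remark \ref{b}(iii), $\lambda_1^1(P_m \times P_2) = \max\{\lambda_1^1(G_1), \lambda_1^1(G_2)\} = \max\{2,2\} = 2$, which is the claim.

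The only point needing a word of care is the identification of the components as paths on $2m$ vertices, i.e. that Remark \ref{b}(iv) is being read correctly: the displayed description $P'_m = u_0v_0u_1v_1u_2v_0\cdots u_{m-1}v_1$ (or $u_{m-1}v_0$) lists $2m$ vertices and is genuinely a path — consecutive listed vertices differ in \emph{both} coordinates (a $u$-index step together with a $v$-index step), which is exactly an edge of the direct product $P_m \times P_2$, and no further edges arise because $P_2$ has only one edge. So each component is indeed a simple path, and since $m \geq 3$ it has at least $6 \geq 3$ vertices; that is all Lemma \ref{e} needs.

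I do not expect any real obstacle here — the statement is a direct corollary of the three parts of Remark \ref{b} together with Lemma \ref{e}. If anything, the mildly delicate step is purely bookkeeping: confirming that for $m \geq 3$ the component paths have $\geq 3$ vertices (true since $2m \geq 6$) so that the value $2$ rather than $1$ from Lemma \ref{e} applies, and noting that both components give the same value so the $\max$ in Remark \ref{b}(iii) is trivial.
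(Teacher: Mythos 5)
Your proof is correct and follows exactly the paper's argument: decompose $P_m \times P_2$ into its two path components, apply Lemma \ref{e} to each, and combine via Remark \ref{b}(iii). One harmless slip: each component is a path on $m$ vertices (not $2m$ --- the $2m$ vertices of $P_m\times P_2$ are split evenly between the two components), but since $m\geq 3$ the conclusion that Lemma \ref{e} gives the value $2$ is unaffected.
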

\begin{proof}$P_m \times P_2$ consists of two connected components $P'_m$ and $P''_m$. By Lemma \ref{e}, $\lambda_1^1(P'_m)=\lambda_1^1(P''_m)=2$ and the result follows from Remark \ref{b}(iii).
\end{proof}

The next results establish $\lambda_1^1(P_m \times P_n), \; m,n \geq 3$.
\begin{lem}\label{j} Let $u_iv_j \in P_m \times P_n, \; n,m \geq 3,$
Suppose $d_{u_i}=d_{v_j}=2$ then $d_{u_iv_j}=4.$
\end{lem}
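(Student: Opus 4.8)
The plan is to read off the degree of $u_iv_j$ directly from the definition of the direct product. Recall from the introduction that $u_av_b$ is adjacent to $u_iv_j$ in $P_m\times P_n$ exactly when $u_au_i\in E(P_m)$ and $v_bv_j\in E(P_n)$; hence the neighbourhood of $u_iv_j$ is the Cartesian product $N_{P_m}(u_i)\times N_{P_n}(v_j)$ of the coordinate neighbourhoods, and consequently $d_{u_iv_j}=d_{u_i}\cdot d_{v_j}$. So the whole statement reduces to the arithmetic identity $2\cdot 2=4$, once we check that the relevant neighbours genuinely exist and are distinct.

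To make this precise I would first translate the hypotheses: $d_{u_i}=2$ forces $u_i$ to be an interior vertex of the path $P_m$, so $1\le i\le m-2$ and $N_{P_m}(u_i)=\{u_{i-1},u_{i+1}\}$, and likewise $d_{v_j}=2$ forces $1\le j\le n-2$ and $N_{P_n}(v_j)=\{v_{j-1},v_{j+1}\}$. Then the neighbours of $u_iv_j$ are exactly the four vertices $u_{i-1}v_{j-1},\ u_{i-1}v_{j+1},\ u_{i+1}v_{j-1},\ u_{i+1}v_{j+1}$. The index bounds just obtained guarantee that $i\pm1\in[m-1]$ and $j\pm1\in[n-1]$, so all four are genuine vertices of $P_m\times P_n$; and they are pairwise distinct because $u_{i-1}\ne u_{i+1}$ and $v_{j-1}\ne v_{j+1}$. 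Counting, $d_{u_iv_j}=4$.

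There is essentially no obstacle: the only point requiring care is the distinctness-and-existence check in the previous paragraph, which is immediate from $1\le i\le m-2$ and $1\le j\le n-2$; in particular the component of $P_m\times P_n$ containing $u_iv_j$ plays no role. This lemma should then serve as a building block for describing the vertices within distance two of $u_iv_j$, and hence the cliques of $(P_m\times P_n)^2$, which is what ultimately pins down the lower bounds for $\lambda_1^1(P_m\times P_n)$.
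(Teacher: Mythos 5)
Your proof is correct and follows essentially the same route as the paper: both arguments read the neighbourhood of $u_iv_j$ off the definition of the direct product as the four vertices $u_{i\pm1}v_{j\pm1}$ and conclude $d_{u_iv_j}=4$. Your version is slightly more careful in noting that these four are \emph{all} the neighbours, that they exist because $1\le i\le m-2$ and $1\le j\le n-2$, and that they are pairwise distinct, but the substance is identical.
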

\begin{proof}Let $u_{i-1}u_iu_{i+1}=P'_3, \; P'_3 \subseteq P_m,\; m \geq 3$ and let $v_{j-1}v_jv_{j+1}=P''_3, \; P''_3 \subseteq P_n,\; n \geq 3$. By the definition of direct product of graphs, \ \\
$V(P'_m \times P''_n)$=
$\left\{u_{i-1}v_{j-1},u_{i-1}v_j,u_{i-1}v_{j+1},u_iv_{j-1},u_iv_j,u_iv_{j+1},u_{i+1}v_{j-1},u_{1+1}v_j,u_{i+1}v_{j+1}\right\}$ \\
 $\subseteq V(P_m \times P_n)$. Since $d_{u_i}=d_{v_j}=2,$ then by the definition of direct product of graphs,  $u_iv_j \in V(P'_3 \times P''_3)$ is adjacent to all the members of
 $\left\{u_{i-1}v_{j-1},u_{i+1}v_{j-1},u_{i+1}v_{j+1},u_{i-1}v_{j+1}\right\}.$  Thus, $d_{u_iv_j}=4$.
\end{proof}

\begin{prop}\label{l}  Suppose $m,n \geq 3$. Then $\lambda_1^1(P_m \times P_n)=4$ for all $m,n \geq 3.$
\end{prop}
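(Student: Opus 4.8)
The plan is to prove the two inequalities $\lambda_1^1(P_m\times P_n)\le 4$ and $\lambda_1^1(P_m\times P_n)\ge 4$ separately. For the lower bound, I would invoke Lemma~\ref{j}: choose an interior vertex $u_iv_j$ with $d_{u_i}=d_{v_j}=2$, which exists whenever $m,n\ge 3$. By Lemma~\ref{j} this vertex has degree $4$, being adjacent to the four ``diagonal'' vertices $u_{i-1}v_{j-1},u_{i+1}v_{j-1},u_{i+1}v_{j+1},u_{i-1}v_{j+1}$. Moreover any two of these four neighbours are at distance two from each other in $P_m\times P_n$ (they share the common neighbour $u_iv_j$, and one checks they are non-adjacent). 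Hence the closed neighbourhood of $u_iv_j$ is a set of five vertices that are pairwise at distance $\le 2$, so they must all receive distinct labels under an $L(1,1)$-labeling; this forces at least $5$ labels, i.e. $\lambda_1^1(P_m\times P_n)\ge 4$.

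For the upper bound I would exhibit an explicit labeling of $V(P_m\times P_n)$ using only the labels $\{0,1,2,3,4\}$. The natural candidate is a ``diagonal-stripe'' pattern: set $l(u_iv_j)\equiv (i+2j)\bmod 5$ (or some similar linear function of $i$ and $j$ modulo $5$). Since every edge of $P_m\times P_n$ joins $u_iv_j$ to $u_{i\pm1}v_{j\pm1}$, along an edge the value $i+2j$ changes by one of $\pm1\pm2\in\{-3,-1,1,3\}$, none of which is $\equiv 0\bmod 5$, so adjacent vertices get different labels. For distance-two pairs, the two vertices differ by $(\Delta i,\Delta j)$ with $\Delta i,\Delta j\in\{-2,0,2\}$ not both zero, giving a change in $i+2j$ lying in $\{\pm2,\pm4,\pm6\}$, again never $\equiv0\bmod 5$; so distance-two vertices also get distinct labels. (One must check that these are indeed all the distance-two configurations — two steps along product-graph edges — which follows from the adjacency description together with Lemma~\ref{j}.) This shows the pattern is a valid $L(1,1)$-labeling with largest label $4$, hence $\lambda_1^1(P_m\times P_n)\le 4$.

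Combining the two bounds gives $\lambda_1^1(P_m\times P_n)=4$ for all $m,n\ge 3$, as claimed. The main obstacle I anticipate is the bookkeeping for the distance-two analysis: one has to argue carefully that every vertex at distance exactly two from $u_iv_j$ is reached by composing two edge-steps of the direct product and therefore has coordinate offsets in $\{-2,0,2\}$ (and in particular that the boundary of the grid does not create any shorter or anomalous paths), rather than merely treating the generic interior case covered by Lemma~\ref{j}. Once the set of distance-$\le 2$ coordinate offsets is pinned down, verifying that $i+2j\bmod 5$ separates all of them is a short finite check.
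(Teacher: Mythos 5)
Your proposal is correct and follows essentially the same route as the paper: the lower bound comes from the induced star $K_{1,4}$ around an interior vertex guaranteed by Lemma~\ref{j} (the paper cites Lemma~\ref{g} to get $\lambda_1^1(K_{1,4})=4$), and the upper bound comes from an explicit diagonal mod-$5$ labeling, with your linear formula $(i+2j)\bmod 5$ playing the same role as the paper's $\left\lfloor \frac{i+3j}{2}\right\rfloor \bmod 5$. Your verification that every distance-one or distance-two pair has coordinate offsets forcing a nonzero change of $i+2j$ modulo $5$ is sound, so no further changes are needed.
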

\begin{proof} Let $G_1$ be a connected component of $P_m \times P_n$. By Lemma \ref{j}, there exists a star $K_{1,4} \subseteq G_1$. By Lemma \ref{g}, $\lambda_1^1(K_{1,4})=4$ and thus, $\lambda_1^1(P_m \times P_n)\geq 4$. Let $u_iv_j \in V(P_m \times P_n)$. For all $u_iv_j \in V(P_m \times P_n)$, $l(u_iv_j)= \left\lfloor \frac{i+3j}{2}\right\rfloor\; \mod \;5$. Thus $\lambda_1^1(P_m \times P_n)\leq 4$ and then the equality follows.
\end{proof}
\begin{rem} \label{n} \rm{By using $l(u_iv_j)= \left\lfloor \frac{i+3j}{2}\right\rfloor \mod 5$ as in the proof of Proposition \ref{l}, given both connected components of $P_m \times P_n$, for all $i\in [m(\epsilon)], then l(u_iv_{10})=l(u_iv_0).$ Furthermore, for all $u_iv_1\in U_1$, $i \in \left\{3,5,7\right\}$ $l(u_iv_1)\notin L(u_{i-2}v_9, u_iv_9, u_{i+2}v_9),$   $\left\{u_{i-2}v_9, u_iv_9, u_{i+2}v_9\right\}\subset U_9$.  We also notice that $l(u_1v_1)$ $\notin L(u_1v_9,u_3v_9,u_9v_9)$, while $l(u_9v_1) \notin L(u_1v_9,u_7v_9,u_9v_9)$. Also, for all $u_1v_j \in V_1, j\in\left\{3,5,7\right\}$, $l(u_1v_j) \notin L(u_9v_{j-2},u_9v_j,u_9v_{j+2})$, $\left\{u_9v_{j-2},u_9v_j,u_9v_{j+2}\right\}\subset V_9$ and $l(u_1v_1) \notin L(u_9v_1,u_9v_3,u_9v_9)$, while $l(u_1v_9) \notin L(u_9v_1,u_9v_7,u_9v_9)$.}
\end{rem} \ \\
The implication of Remark \ref{n} is expressed in the following results.
\begin{cor} Let $C_m$ be a cycle of length $m$, then, $\lambda^1_1(C_{10} \times C_{10})=4.$
\end{cor}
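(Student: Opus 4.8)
The plan is to prove separately that $\lambda_1^1(C_{10}\times C_{10})\ge 4$ and $\lambda_1^1(C_{10}\times C_{10})\le 4$, in each case transporting the analysis already carried out for products of paths. Since $C_{10}$ is bipartite, $C_{10}\times C_{10}$ is disconnected by Theorem \ref{a}, but this causes no difficulty: any two vertices at distance at most $2$ lie in one component, so by Remark \ref{b}(iii) it suffices to exhibit a single labeling of all of $V(C_{10}\times C_{10})$ that respects every distance-two constraint.

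For the lower bound I would proceed as in Lemma \ref{j}. Each vertex $u_iv_j$ of $C_{10}\times C_{10}$ together with its four distinct neighbours $u_{i+1}v_{j+1}$, $u_{i+1}v_{j-1}$, $u_{i-1}v_{j+1}$, $u_{i-1}v_{j-1}$ (indices taken modulo $10$) forms five vertices that are pairwise at distance at most $2$: the centre is adjacent to each of the four, and any two of the four have the centre as a common neighbour. (The four are in fact pairwise non-adjacent, since $C_{10}$ has no loop and $u_{i+1}\not\sim u_{i-1}$ in $C_{10}$, so this is an induced $K_{1,4}$, whence $\lambda_1^1(K_{1,4})=4$ by Lemma \ref{g}.) Any $L(1,1)$-labeling must give these five vertices distinct labels, so $\lambda_1^1(C_{10}\times C_{10})\ge 4$.

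For the upper bound I would re-use the labeling $l(u_iv_j)=\left\lfloor\frac{i+3j}{2}\right\rfloor\bmod 5$ from the proof of Proposition \ref{l}. First it must be checked that $l$ is well defined on $V(C_{10}\times C_{10})$, that is, unchanged when $i$ or $j$ is increased by $10$; this is immediate from $\left\lfloor\frac{(i+10)+3j}{2}\right\rfloor=\left\lfloor\frac{i+3j}{2}\right\rfloor+5$ and $\left\lfloor\frac{i+3(j+10)}{2}\right\rfloor=\left\lfloor\frac{i+3j}{2}\right\rfloor+15$, since $5$ and $15$ are divisible by $5$, and it is exactly the identity $l(u_iv_{10})=l(u_iv_0)$ noted in Remark \ref{n}. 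It then remains to verify that no two vertices at distance at most $2$ in $C_{10}\times C_{10}$ receive the same $l$-value. Every such pair either lies inside a sub-product $P_m\times P_n$ of two paths, where it is coloured properly by Proposition \ref{l}, or else owes its proximity to a cyclic identification; the latter pairs are precisely those straddling the seam joining $u_9$ to $u_0$, or the seam joining $v_9$ to $v_0$, or sitting at the corner where these two seams meet, and the non-equalities required for them are exactly those tabulated in Remark \ref{n} --- the comparisons of $U_1$ with $U_9$, of $V_1$ with $V_9$, and of the corner vertices $u_1v_1$, $u_9v_1$, $u_1v_9$, $u_9v_9$ with their seam partners. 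Since $l$ uses only the labels $\{0,1,2,3,4\}$, this yields $\lambda_1^1(C_{10}\times C_{10})\le 4$, and with the lower bound, $\lambda_1^1(C_{10}\times C_{10})=4$.

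The step I expect to be the real obstacle is confirming that the seam analysis is exhaustive: one must be sure that Remark \ref{n} accounts for every distance-two pair produced by the cyclic identifications in both coordinates and in both connected components, including the delicate pairs near the intersection of the two seams. The periodicity of $l$ modulo $10$ in each index is what lets each seam-crossing constraint be matched to one already verified in Proposition \ref{l}, so that nothing escapes control; but turning that observation into a cleanly checked case analysis, rather than a bare appeal to Remark \ref{n}, is the part that warrants attention.
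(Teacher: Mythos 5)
Your proposal is correct and follows essentially the same route as the paper: the lower bound comes from the induced $K_{1,4}$ exactly as in Lemma \ref{j} and Proposition \ref{l}, and the upper bound reuses the labeling $l(u_iv_j)=\left\lfloor\frac{i+3j}{2}\right\rfloor\bmod 5$, whose periodicity modulo $10$ in each coordinate and whose seam constraints are precisely what Remark \ref{n} records. Your extra observation that every displacement $(a,b)$ realizing distance at most two has $a+3b$ even, so the label difference is exactly $\frac{a+3b}{2}\in\{\pm1,\pm2,\pm3,\pm4\}\not\equiv 0\pmod 5$, in fact disposes of the seam case analysis you were worried about more cleanly than the paper does.
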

\begin{cor} \label{pp1} For all $m,n \equiv 0 \mod 5$, $\lambda_1^1(C_m \times C_n)=4.$
\end{cor}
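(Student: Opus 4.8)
The plan is to prove the two inequalities $\lambda_1^1(C_m\times C_n)\ge 4$ and $\lambda_1^1(C_m\times C_n)\le 4$ separately, reusing the two ingredients of Proposition~\ref{l}: a $K_{1,4}$ subgraph for the lower bound and an explicit modular labeling for the upper bound, the latter now arranged to be compatible with the cyclic indices.

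\textbf{Lower bound.} The graph $C_m\times C_n$ is $4$-regular: $u_iv_j$ is adjacent precisely to $u_{i\pm1}v_{j\pm1}$, and when $5\mid m$ and $5\mid n$ (so $m,n\ge5$) these four vertices are pairwise distinct; they are also pairwise non-adjacent, since an edge between, say, $u_{i+1}v_{j+1}$ and $u_{i+1}v_{j-1}$ would require the loop $u_{i+1}u_{i+1}$ in $C_m$. Hence $u_iv_j$ together with its neighbourhood forms a copy of $K_{1,4}$ (inside whichever component contains it), so by Lemma~\ref{g} with $h=k=1$, $\Delta=4$, together with the monotonicity of $\lambda_1^1$ under subgraphs, $\lambda_1^1(C_m\times C_n)\ge\lambda_1^1(K_{1,4})=4$.

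\textbf{Upper bound.} I would label $l(u_iv_j)\equiv i+3j\pmod 5$, with values in $\{0,1,2,3,4\}$. Since $5\mid m$, $5\mid n$, and the coefficients $1,3$ are what they are, this value does not depend on the chosen representatives of $i$ modulo $m$ and $j$ modulo $n$ — this is precisely the compatibility checked case-by-case in Remark~\ref{n} for $C_{10}\times C_{10}$, and here it is automatic for every multiple of $5$. It remains to verify that $l$ is an $L(1,1)$-labeling, i.e.\ a proper colouring of $(C_m\times C_n)^2$. As in Lemma~\ref{j}, the vertices at distance at most two from $u_iv_j$ in the direct product are, besides $u_iv_j$ itself, the four neighbours $u_{i\pm1}v_{j\pm1}$ and the eight second-neighbours $u_{i\pm2}v_j$, $u_iv_{j\pm2}$, $u_{i\pm2}v_{j\pm2}$. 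Because $l(u_{i+a}v_{j+b})-l(u_iv_j)\equiv a+3b\pmod 5$, it suffices to check $a+3b\not\equiv0\pmod 5$ for each of these twelve displacements $(a,b)$, which is a short finite computation (the twelve values realize only the residues $1,2,3,4$, never $0$). This gives $\lambda_1^1(C_m\times C_n)\le 4$, and with the lower bound we obtain equality.

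\textbf{Main obstacle.} The only delicate point is the last verification: one must be sure the list of distance-$\le2$ vertices in the direct product is complete — in particular that the axial second-neighbours $u_{i\pm2}v_j$ and $u_iv_{j\pm2}$, obtained from two diagonal steps in opposite senses, are counted — and that for small parameters such as $m=n=5$ no extra coincidences arise from the wrap-around among these thirteen vertices, which they do not since $\pm1,\pm2\not\equiv0\pmod 5$. It is also worth noting that the floor labeling $l(u_iv_j)=\lfloor(i+3j)/2\rfloor\bmod 5$ used for $P_m\times P_n$ does not transfer verbatim, because $\lfloor(x+m)/2\rfloor-\lfloor x/2\rfloor$ fails to be constant for odd $m$; replacing it by the genuinely linear $i+3j\bmod 5$ is what makes the argument uniform over all $m,n\equiv0\pmod 5$.
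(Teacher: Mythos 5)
Your proof is correct, and it is worth noting that it is actually tighter than what the paper itself offers: the paper derives this corollary purely as "the implication of Remark~\ref{n}", i.e.\ from the observation that the floor labeling $\lfloor(i+3j)/2\rfloor\bmod 5$ of Proposition~\ref{l} is periodic with period $10$ in each coordinate, which directly yields $C_{10}\times C_{10}$ (and, by tiling, $C_{10m'}\times C_{10n'}$) but does not by itself handle odd multiples of $5$ such as $C_5\times C_5$ or $C_{15}\times C_{25}$. Your replacement of the floor labeling by the genuinely linear $l(u_iv_j)\equiv i+3j\pmod 5$ is exactly the right fix: it is well defined on $\Z_m\times\Z_n$ whenever $5\mid m$ and $5\mid n$, and your finite check of the twelve displacements $(\pm1,\pm1)$, $(\pm2,0)$, $(0,\pm2)$, $(\pm2,\pm2)$ (all giving residues in $\{1,2,3,4\}$) correctly verifies the distance-two condition, including for the smallest case $m=n=5$ where no harmful wrap-around coincidences occur. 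The lower bound via $K_{1,4}$ and Lemma~\ref{g} matches the paper's argument in Proposition~\ref{l}. In short, you take the same two-ingredient strategy as the paper but substitute a cleaner labeling; what this buys is a uniform, fully verified argument for every pair $m,n\equiv 0\pmod 5$, where the paper's own justification is only explicit for multiples of $10$.
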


\section{$L(1,1)$-Labeling of $P_m \times C_m$}
\begin{lem}\label{q} Let $G=P_m \times P_n$, where $n\geq 4$. Suppose that $\alpha_k \in [4]$, such that for some $v_i \in V(G),$ $l(v_i)= \alpha_k$, $v_j \in V(G)$ is the closest vertex in $V(G)$ to $v_i, \; i \neq j$ such that $l(v_j)= \alpha_k.$ Then $3\leq d(v_i,v_j)\leq 4.$
\end{lem}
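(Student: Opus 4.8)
The lower bound $d(v_i,v_j)\ge 3$ needs nothing about the graph: since $i\neq j$ we have $d(v_i,v_j)\ge 1$, and if $d(v_i,v_j)\in\{1,2\}$ then the definition of an $L(1,1)$-labeling gives $|l(v_i)-l(v_j)|\ge 1$, contradicting $l(v_i)=l(v_j)=\alpha_k$. So the real content is the bound $d(v_i,v_j)\le 4$, and the natural way to get it is to work with the explicit optimal labeling $l(u_av_b)=\big\lfloor\tfrac{a+3b}{2}\big\rfloor\bmod 5$ exhibited in Proposition~\ref{l}. The first step is a distance formula for $P_m\times P_n$: since a single edge changes each of the two coordinates by $\pm 1$, an easy induction shows that two vertices $u_av_b$ and $u_cv_d$ of the same component (i.e.\ with $a+b\equiv c+d\pmod 2$) satisfy $d(u_av_b,u_cv_d)=\max\{|a-c|,|b-d|\}$, the value being attained by a walk that moves the ``long'' coordinate monotonically while letting the other coordinate bounce to adjust the length and parity; for this one needs only $n\ge 2$.

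Next I would read off the color classes of $l$ locally. Writing $u_{a+s}v_{b+t}$ for a second vertex of the same component (so $s\equiv t\pmod 2$), a direct check with the formula --- splitting into the cases $a+b$ even and $a+b$ odd so that the floor is harmless and $\tfrac{s+3t}{2}$ is an integer --- shows that $l(u_{a+s}v_{b+t})=\alpha_k$ if and only if $s+3t\equiv 0\pmod{10}$. Among the nonzero solutions of $s+3t\equiv 0\pmod{10}$, those of Chebyshev norm $3$ are exactly $\pm(3,-1)$ and $\pm(1,3)$, those of norm $4$ are exactly $\pm(4,2)$ and $\pm(2,-4)$, and every other solution has norm $\ge 5$. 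Combining this with the distance formula, it suffices to prove: for every vertex $u_av_b$, at least one of the four ``norm-$3$'' shifts $u_{a\pm 3}v_{b\mp 1}$, $u_{a\pm 1}v_{b\pm 3}$ (signs taken correspondingly) stays inside the grid $[0,m-1]\times[0,n-1]$, in which case $d(v_i,v_j)=3$; and if none of them does, then at least one of the ``norm-$4$'' shifts $u_{a\pm 4}v_{b\pm 2}$, $u_{a\pm 2}v_{b\mp 4}$ does, giving $d(v_i,v_j)=4$.

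What remains is an elementary check organized by the position of $u_av_b$ in the grid. If $a\in[3,m-4]$ and $b\in[3,n-4]$ all four norm-$3$ shifts survive; if $u_av_b$ lies on one side of the grid but not near a corner, a one-line inequality shows that at least one of them survives; in both situations $d(v_i,v_j)=3$. The delicate part --- and the step I expect to be the real obstacle --- is the corner analysis: at a corner the four norm-$3$ shifts can all leave the grid, and one must verify that, thanks to $n\ge 4$, an appropriate norm-$4$ shift still lands in the grid \emph{and} that its Chebyshev distance to $u_av_b$ is genuinely realized by a walk confined to that corner region rather than merely quoted from the generic formula. A handful of the very smallest grids must be dispatched by direct inspection; everything else falls out of this bookkeeping, and one obtains $3\le d(v_i,v_j)\le 4$ throughout.
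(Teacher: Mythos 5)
Your proof of the lower bound is fine, but your upper-bound argument attacks the wrong statement. The lemma is not about the particular labeling $l(u_av_b)=\left\lfloor (a+3b)/2\right\rfloor \bmod 5$ from Proposition \ref{l}; it is a structural claim about an \emph{arbitrary} $L(1,1)$-labeling of $G$ whose labels all lie in $[4]$. This matters because the lemma is invoked later (in Lemma \ref{1}, Theorem \ref{a1}, Lemma \ref{a2} and Theorem \ref{a3}) against \emph{hypothetical} span-$4$ labelings of components of $P_m\times C_n$ in order to derive contradictions and hence the lower bounds $\lambda_1^1\geq 5$; a version established only for one concrete labeling of $P_m\times P_n$ cannot be applied to those hypothetical labelings, so your argument, even if completed, would not support any of the downstream results. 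The paper's proof is instead a short pigeonhole argument that never mentions a formula: $v_i$ lies in a copy of $K_{1,4}$ of the form $\left\{u_qv_r,u_{q+2}v_r,u_{q+1}v_{r+1},u_qv_{r+2},u_{q+2}v_{r+2}\right\}$, whose five vertices are pairwise within distance two and therefore must receive all five labels of $[4]$; an adjacent such star also receives all five labels and lies entirely within distance $4$ of $v_i$, so some vertex other than $v_i$ at distance at most $4$ carries $\alpha_k$.

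Separately, even within your own framing the proof is not complete: the corner and boundary analysis, which you yourself flag as ``the real obstacle,'' is deferred rather than carried out, and ``a handful of the very smallest grids'' are left to unstated inspection. Your supporting computations are correct as far as they go: the distance formula $d(u_av_b,u_cv_d)=\max\left\{|a-c|,|b-d|\right\}$ does hold within a component for $m,n\geq 2$ (the bouncing argument works even at a corner, since each path factor has an edge to bounce on, so the realizability worry you raise is unfounded), and the congruence $s+3t\equiv 0 \pmod{10}$ does describe the color classes of the explicit labeling. Note also that in small grids some labels occur only once in a component (for instance the label of $u_2v_0$ in a component of $P_3\times P_4$), so for such $v_i$ the vertex $v_j$ does not exist and the claim is vacuous; a complete case analysis along your lines would have to account for this.
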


\begin{proof} That $3\leq d(v_i,v_j)$ follows directly from the definition of $L(1,1)$-labeling. Next, we show that $d(v_i,v_j)\leq 4.$ Let $S_n$ be a star of order $n+1$. Clearly, $diam(S_n)=2$. Now, suppose that for two stars $S'_4 \subset G$ and $S''_4 \subset G$, there exits some vertex $u_i$ such that $u_i\in V(S'_n)$ and also $u_i\in V(S''_n)$, making $S'_n$ and $S''_n$ to be neighbors. Then, $diam(H)=4$, where $S'_4 \cup S''_4=H\subset G$. Now, suppose $d(v_i,v_j)>4$. Let $ v_i \in V(S'_4)$ such that $l(v_i)=\alpha_k$. Also, let $L(S'_4)=[4]$. Then, $\alpha_k \neq l(v_k)$ for all $v \in V(S''_4)$ since $d(u_i,v_j) \geq 4$. Thus, there exits some $\alpha_j \notin [4]$ such that $\alpha_j \in L(S''_4)$.
Then, $\lambda_1^1(H)\geq 5,$ and consequently, $\lambda_1^1(G) \geq 5$. This is a contradiction.
\end{proof}

\begin{lem}\label{r} Let $v_i,v_j \in V(G)$ be two center vertices of stars $S'_4, S''_4 \subset G$ respectively, and that $d(v_i,v_j)=4$ if $\alpha_i=l(v_i)$ and $\alpha_j=l(v_j)$, $\alpha_i,\alpha_j \in [4]$, then $\alpha_i \neq \alpha_j$.
\end{lem}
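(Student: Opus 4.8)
The plan is to argue by contradiction: suppose $v_i$ and $v_j$ are centers of the stars $S'_4, S''_4 \subseteq G$ with $d(v_i,v_j)=4$, and suppose that $\alpha_i = \alpha_j$. The key structural fact to exploit is that, by Lemma~\ref{j}, a vertex $u_a v_b$ is the center of a $K_{1,4}$ in a component of $P_m\times P_n$ precisely when $d_{u_a}=d_{v_b}=2$, i.e. when $a\in\{1,\dots,m-2\}$ and $b\in\{1,\dots,n-2\}$; its four neighbors are $u_{a\pm1}v_{b\pm1}$. So the hypothesis pins down the coordinates of $v_i=u_{a}v_{b}$ and $v_j=u_{c}v_{d}$ with $1\le a,c\le m-2$ and $1\le b,d\le n-2$, and the distance condition $d(v_i,v_j)=4$ constrains $(|a-c|,|b-d|)$: in the direct product, moving one step changes both coordinates by exactly one, so the graph distance between $u_av_b$ and $u_cv_d$ (within one component) is governed by $\max(|a-c|,|b-d|)$ together with a parity condition, and $d=4$ forces essentially $|a-c|,|b-d|\le 4$ with at least one of them equal to $4$.

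Next I would examine the star $S''_4$ centered at $v_j$, whose leaves are the four vertices $u_{c\pm1}v_{d\pm1}$. Because $d(v_i,v_j)=4$, the two stars $S'_4$ and $S''_4$ are ``neighboring'' in the sense of the proof of Lemma~\ref{q}: there is a vertex (or a short path) realizing the distance $4$, and in fact one can locate a common vertex $w$ that lies at distance $2$ from each center — $w$ is adjacent to a leaf of $S'_4$ and to a leaf of $S''_4$, or $w$ is itself a shared leaf. The point is that $v_i$, this intermediary $w$, and $v_j$ all lie in a subgraph $H = S'_4\cup S''_4$ (or a slightly larger ball) of diameter $4$, and within $H$ every pair of vertices is at distance $\le 4$, hence at distance $\le 2$ from at least one of the two centers. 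Now apply the $L(1,1)$ constraint: if $\alpha_i=\alpha_j$, then consider a vertex $w\in V(H)$ with $d(w,v_i)\le 2$ and $d(w,v_j)\le 2$. Such $w$ cannot use the label $\alpha_i=\alpha_j$ (it is within distance $2$ of $v_i$). But more is true — I want to show the five vertices consisting of $v_i$, $v_j$, and three mutually-close vertices of $H$ force five distinct labels, contradicting $\lambda_1^1(G)=4$ from Proposition~\ref{l}. Concretely, since $d(v_i,v_j)=4$ but the leaves adjacent to $v_i$ are at distance $\le 4$ from the leaves adjacent to $v_j$, one can exhibit a $K_{1,4}$ or a path of the right kind inside $H$ whose five labels must all differ once $v_i$ and $v_j$ are forced to share a label; equivalently, identifying $\alpha_i$ with $\alpha_j$ effectively ``wastes'' a color across a diameter-$4$ piece that already needs all of $[4]$, pushing the required label up to $5$.

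I would finish by spelling out the coordinate cases for $(|a-c|,|b-d|)\in\{(4,0),(4,1),(4,2),(4,3),(4,4),(0,4),(1,4),(2,4),(3,4)\}$ (up to the obvious symmetry swapping the two factors and up to reflection), checking in each case that a concrete five-vertex configuration inside $S'_4\cup S''_4$ together with one connecting vertex has the property that all pairwise distances are $\le 2$ except the pair $\{v_i,v_j\}$; then $\alpha_i=\alpha_j$ forces a fifth label and contradicts Proposition~\ref{l}. The main obstacle I anticipate is the distance-$4$ case analysis in the direct product: because adjacency there moves \emph{both} coordinates simultaneously, the metric is not simply $\ell^\infty$ on the grid — there are parity obstructions and the two stars can be positioned ``diagonally'' in several inequivalent ways, so care is needed to confirm that in every admissible position of $S''_4$ relative to $S'_4$ one really does find the required tightly-clustered five vertices. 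Handling the boundary of $P_m\times P_n$ (where some of the vertices $u_{a\pm1}v_{b\pm1}$ might be absent) is a minor additional wrinkle, but the hypothesis that both $v_i$ and $v_j$ are \emph{centers} of $K_{1,4}$'s already guarantees enough room, so I expect that to be routine.
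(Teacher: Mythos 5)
Your overall strategy --- argue by contradiction and exploit the fact that the five vertices of a $K_{1,4}$ in $G$ are pairwise within distance two, so that their labels exhaust $[4]$ --- is the same as the paper's. But the concrete configuration you propose to finish with does not deliver the contradiction. You ask for five vertices, namely $v_i$, $v_j$ and three others, with all pairwise distances at most $2$ except for the pair $\{v_i,v_j\}$, and claim that $\alpha_i=\alpha_j$ then ``forces a fifth label.'' Count again: under $\alpha_i=\alpha_j$ such a configuration needs only four distinct labels, and even five distinct labels would still be consistent with $\lambda_1^1(G)=4$, since $[4]=\{0,1,2,3,4\}$ already contains five labels. A fifth label is not a contradiction; you would need a sixth. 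So the closing step as written proves nothing, even though your earlier remark about ``a $K_{1,4}$ whose five labels must all differ'' points in the right direction.

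The paper closes the argument by a different placement of the five-vertex clique: it exhibits a third star $S'''_4$ sitting \emph{between} the two given centers --- with $v_i=u_{q+1}v_{r-1}$ and $v_j=u_{q+1}v_{r+3}$, the star on $\left\{u_qv_r,u_{q+2}v_r,u_{q+1}v_{r+1},u_qv_{r+2},u_{q+2}v_{r+2}\right\}$ --- chosen so that every one of its five vertices lies within distance two of $v_i$ or of $v_j$. Those five vertices are pairwise within distance two of one another, so their labels are five distinct elements of $[4]$, i.e.\ all of $[4]$; in particular some vertex of $S'''_4$ must carry $\alpha_i$. But every vertex of $S'''_4$ is within distance two of a vertex already labelled $\alpha_i=\alpha_j$, so none may carry it --- that is the contradiction. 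In short, the clique that exhausts $[4]$ must be disjoint from $\{v_i,v_j\}$ and entirely ``shadowed'' by them, not a clique containing them. Your instinct that one should also check the other relative positions of two centers at distance $4$ (displacements $(4,0)$, $(4,2)$, $(4,4)$ up to symmetry, not only the collinear case the paper writes down) is sound, and is a genuine omission in the paper itself; but until the counting step is repaired the case analysis has nothing to verify.
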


\begin{proof}Suppose on the contrary that $v_i,v_j$ are respective centers of $S'_4, S''_4$ such that $d(v_i,v_j)=4$ and $\alpha_i=\alpha_j$. There exists a star $S'''_4\subset G$ with $V(S'''_4)={u_qv_r,u_{q+2}v_r,u_{q+1}v_{r+1},u_qv_{r+2},u_{q+2}v_{r+2}}$, where $0 \leq q,\; q+2\leq m$ and $r \leq 2, r+2 \leq n-3$, such that $v_i=u_{q+1}v_{r-1}$ and $v_j=u_{q+1}v_{r+3}$. Therefore $v_i$ is adjacent to $u_qv_r$ and $u_{q+2}v_r$ and $d(v_i,u_{q+1}v_{r+1})=2$. Likewise, $v_j$ is adjacent to both $u_qv_{r+2},u_{q+2}v_{r+2}$ and $d(v_j,u_{q+1}v_{r+1})=2$. Thus there exists no vertex $v_l \in V(S''')$ such that $l(v_l)=\alpha_i \in [4]$. This contradicts the fact that $\lambda_1^1(G) \leq 4$, for all $m,n \geq 2.$
\end{proof}
\begin{lem}\label{s} Let $G'\subset G$ with \ \\ $V(G')=\{u_qv_r,u_{q+2}v_r,u_{q+1}v_{r+1},u_qv_{r+2},u_{q+2}v_{r+2},  u_{q+1}v_{r+3},u_qv_{r+4},u_{q+2}v_{r+4}\}$, $q,r \geq 0$. Suppose that $l(u_qv_r), l(u_{q+2}v_{r})$ are $\alpha_0, \alpha_1$ respectively, then $l(u_qv_{r+4}),l(u_{q+2}v_{r+4})$ are both neither $\alpha_0$ nor $\alpha_1$.
\end{lem}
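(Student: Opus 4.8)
The plan is to read $G'$ as the union of two copies of the forbidden star $K_{1,4}$ glued along two common leaves, and then to count how the five labels of $[4]$ are forced to spread across the two stars.

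First I would fix notation and the adjacency picture. Write $c_1=u_{q+1}v_{r+1}$ and $c_2=u_{q+1}v_{r+3}$. By the definition of the direct product, $c_1$ is adjacent to the four corners $u_qv_r,\,u_{q+2}v_r,\,u_qv_{r+2},\,u_{q+2}v_{r+2}$, while $c_2$ is adjacent to $u_qv_{r+2},\,u_{q+2}v_{r+2},\,u_qv_{r+4},\,u_{q+2}v_{r+4}$. Hence $c_1$ is the centre of a star $S'_4\subset G'$ and $c_2$ is the centre of a star $S''_4\subset G'$, and the two stars share exactly the leaves $u_qv_{r+2},u_{q+2}v_{r+2}$. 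Leaves of a common centre are mutually at distance two, so each closed neighbourhood $N[c_1]$ and $N[c_2]$ is a clique on five vertices in $G^2$; by Proposition \ref{l} the labels on each of these five-vertex sets are pairwise distinct and therefore exhaust $[4]$.

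Next I would carry out the label bookkeeping. Set $\{\beta_0,\beta_1\}=\{l(u_qv_{r+2}),l(u_{q+2}v_{r+2})\}$ for the two shared leaves and put $T=[4]\setminus\{\beta_0,\beta_1\}$, a three-element set. Since $S'_4$ uses all five labels, the remaining three vertices $c_1,u_qv_r,u_{q+2}v_r$ carry precisely the labels of $T$, so $\{\alpha_0,\alpha_1,l(c_1)\}=T$ and hence $\{\alpha_0,\alpha_1\}=T\setminus\{l(c_1)\}$. Applying the same reasoning to $S''_4$ gives $\{l(u_qv_{r+4}),l(u_{q+2}v_{r+4}),l(c_2)\}=T$, so $\{l(u_qv_{r+4}),l(u_{q+2}v_{r+4})\}=T\setminus\{l(c_2)\}$. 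Because $c_1$ and $c_2$ are joined through either shared leaf they lie at distance two, whence the $L(1,1)$ condition forces $l(c_1)\neq l(c_2)$. To reach the stated conclusion I would then compare the two-element set $T\setminus\{l(c_2)\}$ against $\{\alpha_0,\alpha_1\}=T\setminus\{l(c_1)\}$, aiming to show the former avoids both labels of the latter.

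The main obstacle lives exactly at this comparison, and I expect it to be the decisive point of the whole lemma. Both $\{\alpha_0,\alpha_1\}$ and $\{l(u_qv_{r+4}),l(u_{q+2}v_{r+4})\}$ are two-element subsets of the same three-element set $T$, obtained by deleting $l(c_1)$ and $l(c_2)$ respectively; since $l(c_1)\neq l(c_2)$ the two pairs are distinct, which already yields $\{l(u_qv_{r+4}),l(u_{q+2}v_{r+4})\}\neq\{\alpha_0,\alpha_1\}$. The difficulty is that two distinct two-element subsets of a three-element set always meet, and their common element is the unique label of $T$ different from both $l(c_1)$ and $l(c_2)$; a priori this element lies in $\{\alpha_0,\alpha_1\}$ and simultaneously in $\{l(u_qv_{r+4}),l(u_{q+2}v_{r+4})\}$. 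Thus the purely local data of $G'$, together with the single constraint $l(c_1)\neq l(c_2)$, is not by itself enough to expel \emph{both} far labels from $\{\alpha_0,\alpha_1\}$; establishing the statement exactly as worded hinges on ruling out that shared label, which would require pinning $l(c_2)$ down relative to the nearer corners (for instance by invoking the distance-type argument of Lemma \ref{r} across the strip, or an orientation constraint on how consecutive centres are labelled). Isolating and justifying such a constraint is the part of the proof I would have to develop most carefully, and it is where I expect the genuine difficulty to lie.
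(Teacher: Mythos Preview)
Your bookkeeping is correct and in fact already proves the lemma; the ``obstacle'' you identify is an artefact of over-reading the conclusion. The paper's own proof makes clear that the intended assertion is only that the \emph{pair} $\{l(u_qv_{r+4}),l(u_{q+2}v_{r+4})\}$ is not equal to $\{\alpha_0,\alpha_1\}$, not that each of the two far labels individually avoids both $\alpha_0$ and $\alpha_1$. Indeed, the paper argues by assuming $l(u_qv_{r+4})=\alpha_0$ and $l(u_{q+2}v_{r+4})=\alpha_1$ (or vice versa), which forces $l(c_2)=\alpha_2=l(c_1)$, contradicting $d(c_1,c_2)=2$. That is exactly the contrapositive of your line ``since $l(c_1)\neq l(c_2)$ the two pairs are distinct''. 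So your argument and the paper's coincide, and you are done at that point.

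The stronger statement you were aiming for is in fact false: the paper's own labeling $l(u_iv_j)=\lfloor (i+3j)/2\rfloor \bmod 5$ at $q=r=0$ gives $\alpha_0=0$, $\alpha_1=1$, while $l(u_0v_4)=1=\alpha_1$. So one of the far labels can and does coincide with one of $\alpha_0,\alpha_1$; only the coincidence of the full unordered pair is excluded. This weaker conclusion is also all that is used downstream (in the $P_m\times C_4$ argument, where the cyclic identification forces the far pair to equal $\{\alpha_0,\alpha_1\}$ and hence yields the desired contradiction).
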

\begin{proof} The vertex set $\left\{u_qv_r,u_{q+2}v_r,u_{q+1}v_{r+1},u_qv_{r+2},u_{q+2}v_{r+2}\right\} \subset V'(G')$ induces a star $S_4 \subset G.$ Since $\lambda_1^1(S_4)=4,$ we have $l(u_{q+1}v_{r+1})=\alpha_2,l(u_qv_{r+2})=\alpha_3,l(u_{q+2}v_{r+2})=\alpha_4$. Set $\left\{u_qv_{r+2},u_{q+2}v_{r+2},u_{q+1}v_{r+3},u_qv_{r+4},u_{q+2}v_{r+4}\right\} \subset V(G')$ induces another star $S'_4 \subset G'$. Clearly, $S_4$ and $S'_4$ are adjacent and $S_4 \cup S'_4=G'$ Now, suppose $l(u_qv_{r+4})=\alpha_0,l(u_{q+2}v_{r+4})=\alpha_1$, or vice versa without the loss of generality. Since $l(u_qv_{r+2})=\alpha_3,$ and $l(u_{q+2}v_{r+2})=\alpha_3$ from the labeling on $S_4,$ the only label left in $[4]$ for $u_{q+1}v_{r+3}$ is $\alpha_2$. This however is a contradiction since $d(u_{q+1}v_{r+1},u_{q+1}v_{r+3})=2$.
\end{proof}
\begin{rem} \label{t} \begin{itemize} \item[(i)] \rm{By theorem \ref{a}, $P_m \times C_n$ is connected if $n$ is odd and not connected if $n$ is even. This is because when $n$ is odd, cycle $C_n$ is non bipartite and when $n$ is even, $C_n$ is bipartite. Now, Let $P_m \times C_n=G=G_1 \cup G_2$, where $n$ is even. Then\\ $V(G_1)=\left\{(u_i,v_j):i \in[(m-1)(\epsilon)],j\in [n(\epsilon)]\; {\rm or}\ i\in [(m-1)(o)], j\in [n(o)]\right\}$ and  \\ $V(G_2)=\left\{(u_i,v_j):i \in[(m-1)(\epsilon)],j\in [n(o)]\; {\rm or}\ i\in [(m-1)(o)], j\in [n(\epsilon)]\right\}$.}
\item[(ii)] \rm{$G_1 \;{\rm and }\ G_2$ above are isomorphic since $C_n$ is a cycle and they are both components of $G$.}
\item[(iii)] \rm{Suppose $G=P_m \times C_n$, $n$ odd. Then $G$ is equivalent to $G'$, where $G'$ is one of the two components of $P_m \times C_{2n}$.}
\item[(iv)] \rm{$G'$ above is equivalent to the connected component of $P_m \times P_{2n+1}$ such that $u_iv_0$ coincides with $u_iv_{2n}$, for all $i \in [(m-1)(\epsilon)]$ or for all $i\in [(m-1)(o)].$}
\end{itemize}
\end{rem}
\begin{lem} \label{vii} \cite{BBB1}
$\lambda_1^1(C_m)= \left\{
\begin{array}{ll}
            2 &  \mbox{if} \; m\equiv 0 \mod 3    \\
            3 &  m\not\equiv 0 \mod 3; m\neq 5\\
            4 &  m = 5.
\end{array}
\right.$

\end{lem}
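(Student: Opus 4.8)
The plan is to pass to the square graph $C_m^2$ and handle the short cycles separately. Since $\lambda_1^1(G)+1=\chi(G^2)$ (as recorded in the introduction), it suffices to compute $\chi(C_m^2)$. When $m\le 5$ any two distinct vertices of $C_m$ are at distance $1$ or $2$, so $C_m^2=K_m$ and $\lambda_1^1(C_m)=m-1$; this settles $m=3,4,5$ with the values $2,3,4$. When $m\ge 6$, two distinct vertices of $C_m$ are at distance $1$ or $2$ precisely when they lie together in some block $u_{i-1}u_iu_{i+1}$ of three consecutive vertices (indices taken mod $m$); hence a labeling of $C_m$ is an $L(1,1)$-labeling if and only if every such cyclic block receives three pairwise distinct labels.

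Next I would settle the lower bounds. Three consecutive vertices always induce a triangle in $C_m^2$, so $\lambda_1^1(C_m)\ge 2$ for all $m$. Suppose now $m\ge 6$, $3\nmid m$, and that some $L(1,1)$-labeling uses only the three labels $0,1,2$. Then every block $\{u_{i-1},u_i,u_{i+1}\}$ realizes the full set $\{0,1,2\}$, so $l(u_{i+1})$ is forced to be the label different from $l(u_{i-1})$ and $l(u_i)$; iterating once around the cycle, $l$ must coincide, after renaming, with the periodic string $0,1,2,0,1,2,\dots$, and this is consistent at the seam only if $3\mid m$ --- a contradiction. Hence $\lambda_1^1(C_m)\ge 3$ whenever $m\ge 6$ with $3\nmid m$; together with the case $m=4$ this gives the lower bound in the middle case, while $m=5$ gives $\lambda_1^1(C_5)\ge 4$.

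It remains to produce labelings attaining these bounds. If $3\mid m$, the labeling $l(u_i)=i\bmod 3$ makes every block of three consecutive vertices equal to $\{0,1,2\}$, so $\lambda_1^1(C_m)\le 2$. If $m\ge 7$ with $3\nmid m$, write $m=3q+r$ with $q\ge 2$ and $r\in\{1,2\}$, and use $q-1$ consecutive copies of the string $(0,1,2)$ followed by the tail $(0,1,2,3)$ when $r=1$ and by the tail $(3,0,1,2,3)$ when $r=2$; the blocks lying inside the periodic part are cyclic shifts of $(0,1,2)$, and a direct check of the few blocks straddling the seam shows the labeling is valid, so $\lambda_1^1(C_m)\le 3$. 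Combined with the lower bounds this gives the three stated values. The step I expect to be the main obstacle is not any construction but pinning down $m=5$: here the blocks of three consecutive vertices of $C_5$ together cover every pair of vertices, so $C_5^2=K_5$ and five labels are unavoidable; this is precisely the value where the commonly quoted formula $\lambda_1^1(C_m)=3$ for $m\not\equiv 0\pmod 3$ (cf.\ Lemma \ref{f}) must be corrected, and the present lemma records exactly that correction.
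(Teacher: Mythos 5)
Your proof is correct and complete. Note that the paper does not prove this lemma at all --- it is quoted from the reference [BBBB] (the code-assignment paper of Bertossi et al.) --- so there is no in-paper argument to compare against; what you have supplied is a self-contained verification of the cited result. Your route via $\chi(C_m^2)$ is sound: the reduction of the $L(1,1)$ condition to ``every window of three consecutive vertices is rainbow'' is exactly right for $m\ge 6$, the observation that $C_m^2=K_m$ for $m\in\{3,4,5\}$ disposes of the small cases (and in particular isolates $m=5$ as the exceptional value $4$), the periodicity argument forcing $3\mid m$ when only three labels are used is airtight, and the explicit strings $(0,1,2)^{q-1}(0,1,2,3)$ and $(0,1,2)^{q-1}(3,0,1,2,3)$ do have the correct lengths $3q+1$ and $3q+2$ and pass the seam checks. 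Your closing remark is also apt: this lemma is precisely the correction of the earlier-quoted Lemma \ref{f}, which omits the $m=5$ exception, and your $C_5^2=K_5$ computation is the cleanest way to see why that exception is forced.
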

\begin{thm}  $\lambda_1^1(P_2 \times C_m)= \left\{
\begin{array}{ll}2 &   \mbox {if} \; m \equiv 0\; \mod \; 3\\
                 3 &   \mbox {otherwise}.
\end{array}
\right. $
\end{thm}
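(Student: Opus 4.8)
The plan is to reduce the problem to the known labeling numbers of cycles via the structural remarks about $P_2 \times C_m$. First I would invoke Remark~\ref{b}(iv) and Remark~\ref{t}: for $P_2 \times C_m$ the two factors are a path of length one and a cycle, so the product is a union of components each of which is itself a cycle (or, when $m$ is odd, a single cycle). Concretely, by Remark~\ref{b}(iv) applied with the roles of $P_m$ and $P_2$ interchanged, $P_2 \times C_m$ decomposes along the two bipartition classes of $P_2$ into paths-in-disguise, which because the second factor is a cycle close up into cycles; one should check that in each case the resulting component is a cycle $C_{m}$ (if $m$ is even, two disjoint copies; if $m$ is odd, the vertices wrap around twice so one gets a single $C_{2m}$). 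Thus $P_2\times C_m$ is, up to isomorphism, either $C_m \cup C_m$ or $C_{2m}$.

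Next I would apply Remark~\ref{b}(iii), which says $\lambda_1^1$ of a disjoint union is the maximum over components, together with Lemma~\ref{f} (equivalently Lemma~\ref{vii}) giving $\lambda_1^1(C_k)=2$ iff $k\equiv 0 \bmod 3$ and $\lambda_1^1(C_k)=3$ otherwise, except $\lambda_1^1(C_5)=4$. In the even case, each component is $C_m$, so $\lambda_1^1(P_2\times C_m)=\lambda_1^1(C_m)$, which is $2$ when $3\mid m$ and $3$ otherwise; note $m=5$ is impossible here since $m$ is even, so the exceptional value $4$ never arises. In the odd case the single component is $C_{2m}$, and $3\mid 2m \iff 3\mid m$, so again the answer is $2$ when $3\mid m$ and $3$ otherwise; and $2m=5$ is impossible, so the exceptional value again does not occur. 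Combining the two parities gives exactly the stated formula.

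The main obstacle — really the only nontrivial point — is verifying the structural claim that the components of $P_2\times C_m$ are cycles of the asserted lengths, in particular that for odd $m$ one gets a single $C_{2m}$ rather than two copies of $C_m$; this is exactly the parity phenomenon behind Theorem~\ref{a} and Remark~\ref{t}(iii)–(iv), and it must be stated carefully so that the divisibility bookkeeping $3\mid 2m \iff 3\mid m$ is applied to the correct cycle length. Everything after that is a direct citation of Lemma~\ref{f}/Lemma~\ref{vii} and Remark~\ref{b}(iii), with the small observation that the sole exceptional case $C_5$ in Lemma~\ref{vii} cannot occur because the relevant cycle length ($m$ even, or $2m$) is never equal to $5$.
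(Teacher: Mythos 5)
Your proposal is correct and follows essentially the same route as the paper: decompose $P_2\times C_m$ into two copies of $C_m$ when $m$ is even and a single $C_{2m}$ when $m$ is odd (the paper's Remark 4.4(iii)), then quote the known $\lambda_1^1$ values for cycles and take the maximum over components. Your explicit checks that $3\mid 2m \iff 3\mid m$ and that the exceptional length $5$ cannot occur are small points the paper leaves implicit, but the argument is the same.
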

\begin{proof} By  Remark \ref{t}(iii), if $m$ is odd, then $P_2 \times C_m \equiv C_{2m}.$ If $m$ is even, then $P_2 \times C_m$ is a union of $m$-cycles,
  $C'_m$ and $C''_m$ are $m-cycles$ which are its components.
  By Lemma \ref{vii}, for $m$ odd, $\lambda_1^1(P_2 \times C_m)=\lambda^1_1(C_{2m})=q$, where $q=2$ for $2m \equiv 0\;\mod \; 3$ and $q=3$ if otherwise. Also $\lambda_1^1(P_2 \times C_2)= \lambda_1^1(C_{n})=p$, where $p=2$ if $n\equiv 0 \mod 3$ and $p=3$  otherwise.
\end{proof}
\begin{thm} \label{viii} For any $m\in \mathbb N$, $m\geq 3,$ $\lambda_1^1(P_m \times C_3)=5$.
\end{thm}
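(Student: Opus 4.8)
The plan is to establish the two inequalities $\lambda_1^1(P_m \times C_3) \ge 5$ and $\lambda_1^1(P_m \times C_3) \le 5$ separately. For the lower bound, the key structural observation is that $P_m \times C_3$ (which is connected, since $C_3$ is non-bipartite, by Theorem~\ref{a}) contains a vertex of large degree whose closed neighborhood forces six distinct labels. Concretely, take $m \ge 3$ and look at an interior vertex $u_i v_j$ with $1 \le i \le m-2$. In $C_3$ every vertex has degree $2$, and $u_i$ has degree $2$ in $P_m$, so $u_i v_j$ has degree $4$; but the crucial point is that the two neighbors $u_{i-1}v_{j-1}$ and $u_{i-1}v_{j+1}$ of $u_i v_j$ are themselves adjacent in $P_m \times C_3$, because $u_{i-1}u_{i-1}$ is not an edge but $v_{j-1}v_{j+1}$ \emph{is} an edge of $C_3$ (the two non-$j$ vertices of the triangle are adjacent). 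Likewise $u_{i+1}v_{j-1}$ and $u_{i+1}v_{j+1}$ are adjacent. So the subgraph induced on $\{u_iv_j\} \cup N(u_iv_j)$ is not a star $K_{1,4}$ but contains extra edges, and in fact one checks that $u_i v_j$ together with its four neighbors and the short-distance relations among them forces all five of these vertices to receive pairwise distinct labels, and then a sixth vertex at distance two (for instance $u_{i+2}v_j$ or a vertex $u_{i-1}v_j$ coming from the second neighbor-level) is constrained to avoid all five labels already used on the relevant pairwise-distance-$\le 2$ set. I would carry this out by exhibiting an explicit induced subgraph $H \subseteq P_m \times C_3$ on six vertices whose square $H^2$ is $K_6$, which immediately gives $\lambda_1^1(P_m \times C_3) \ge 5$; the natural candidate is the "prism-like" piece on $\{u_i v_0, u_i v_1, u_i v_2, u_{i+1}v_0, u_{i+1}v_1, u_{i+1}v_2\}$ together with a pendant, and I expect that verifying $H^2 = K_6$ is the main technical obstacle — one has to track the distance-two pairs carefully through the direct-product definition.

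For the upper bound I would produce an explicit $L(1,1)$-labeling using at most the labels $\{0,1,2,3,4,5\}$. Following the pattern of Proposition~\ref{l}, where $l(u_iv_j) = \lfloor (i+3j)/2 \rfloor \bmod 5$ worked for $P_m \times P_n$, I would look for a linear-type formula of the form $l(u_i v_j) \equiv a i + b j \pmod 6$ (or a $\lfloor \cdot \rfloor$ variant), choosing the coefficients so that (1) vertices at distance $1$ get different labels and (2) vertices at distance $2$ get different labels, now exploiting that $v_j$ ranges over $\Z/3$. The distance-$1$ condition translates into $a \pm b \not\equiv 0$, and the distance-$2$ conditions (there are only a few types: $2a$, $a\pm b$ combined, $2b \equiv -b$, etc., because $C_3$ is so small) translate into a short finite list of non-congruences mod $6$; picking e.g. $a = 1$, $b = 2$ and checking the list should work, and if a pure linear formula narrowly fails one falls back on a periodic labeling of a fundamental $P_6 \times C_3$ block that is then tiled along the path direction (noting $\gcd$-compatibility along $C_3$ is automatic since the $C_3$-coordinate only appears through its residue).

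Putting the two halves together gives $\lambda_1^1(P_m \times C_3) = 5$ for all $m \ge 3$, which is the claimed statement. I would also remark why the value jumps to $5$ here compared with $\lambda_1^1(P_m \times P_n) = 4$ in Proposition~\ref{l}: it is exactly the extra triangle edges in the $C_3$ factor that upgrade the forced-clique in the square from $K_5$ to $K_6$. The only place I anticipate needing genuine care, rather than routine checking, is the lower-bound clique identification — getting an honest six-vertex set all of whose pairwise distances in $P_m \times C_3$ are at most $2$ — since the direct product is sparse and one's first guesses for such a set tend to have a pair at distance $3$.
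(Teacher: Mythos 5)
Your overall strategy (a six\hyphenation{}-vertex mutual-distance-$\le 2$ set for the lower bound, an explicit periodic labeling for the upper bound) is sound and, if executed correctly, would be cleaner than the paper's case analysis. But as written both halves contain errors. For the lower bound, your structural claim is false: in the \emph{direct} product, $(x_1,x_2)(y_1,y_2)$ is an edge only if \emph{both} coordinate pairs are edges, so $u_{i-1}v_{j-1}$ and $u_{i-1}v_{j+1}$ are \emph{not} adjacent (no loop at $u_{i-1}$), contrary to your "one factor suffices" reasoning. More importantly, your candidate set $\{u_iv_0,u_iv_1,u_iv_2,u_{i+1}v_0,u_{i+1}v_1,u_{i+1}v_2\}$ is not a clique in the square: $u_iv_0$ and $u_{i+1}v_0$ have no common neighbour (a common neighbour would need first coordinate adjacent to both $u_i$ and $u_{i+1}$ in $P_m$, which is impossible), so $d(u_iv_0,u_{i+1}v_0)=3$. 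This is exactly the distance-3 trap you yourself flagged. The fix is to take columns $i$ and $i+2$: the set $\{u_iv_j, u_{i+2}v_j : j\in\{0,1,2\}\}$ (which exists whenever $m\ge 3$) does have all pairwise distances equal to $2$, since any $u_iv_j$ and $u_{i+2}v_{j'}$ share the neighbour $u_{i+1}v_l$ with $l\notin\{j,j'\}$, and two vertices in the same column share a neighbour in the adjacent column. That corrected clique gives $\lambda_1^1\ge 5$ in one line, which is a genuinely shorter route than the paper's contradiction argument inside a component of $P_m\times P_7$.

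For the upper bound, the linear formula cannot be rescued by any choice of coefficients: since every pair $u_iv_j$, $u_{i+2}v_{j'}$ is at distance $2$ for \emph{all} $j,j'$, you need $2a+b\delta\not\equiv 0 \pmod 6$ for every $\delta\in\{0,\pm1,\pm2\}$, and one checks that for each admissible $b$ the set $\{0,\pm b,\pm 2b\}$ already covers every even residue mod $6$, while $2a$ is always even. In particular your suggested $a=1,b=2$ fails on the pair $u_iv_j$, $u_{i+2}v_{j+2}$. Your fallback of tiling a fixed block along the path direction is the right move and is essentially what the paper does: give columns $i\equiv 0,1\pmod 4$ the label set $\{0,1,2\}$ and columns $i\equiv 2,3\pmod 4$ the set $\{3,4,5\}$, assigning label $j$ (respectively $j+3$) to $u_iv_j$; adjacent columns with the same set agree vertexwise (their equal-label pairs are at distance $3$), and columns at distance $2$ always get disjoint sets. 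With the clique corrected and the explicit period-$4$ labeling substituted for the linear formula, your proof goes through.
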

\begin{proof} By Remarks \ref{t}(iii) and (iv), and $P_m \times C_3$ is congruent to a connected component $G'$ of $P_m \times P_7$ with $u_iv_0 \equiv u_iv_6,$ $u_iv_0,u_iv_6 \in V(G')$. Thus, $L(u_iv_0)=L(u_iv_6)$ for all $i\in [(m-i)(\epsilon)].$  Now, let $G''$ be a subgraph of $G'$ induced by the vertex subset \\ $\left\{u_iv_0,u_{i+2}v_0,u_{i+1}v_1,u_{i}v_2,u_{i+2}v_2,u_{i+1}v_3,u_iv_4,u_{i+2}v_4,u_{i+1}v_5,u_iv_6,u_{i+1}v_6\right\} \subseteq V(G')$, for any $i\in [(m-1)(\epsilon)].$ Suppose $\lambda^1_1(G')=4$ and $\alpha_0,\alpha_1,\alpha_2,\alpha_3,\alpha_4 \in [4].$ Let $l(u_iv_0)=\alpha_0$ and $l(u_{i+2}v_0)=\alpha_1$. Then, $l(u_iv_6)=\alpha_0$ and $l(u_{i+2}v_6)=\alpha_1$. Now, suppose $l(u_{i+1}v_1)=\alpha_2$. Since $d(u_{i+1}v_1,u_{i+1}v_5)=2,$ then for some $\alpha_k \in [4]$, $\alpha_k=l(u_{i+1}v_5)\neq \alpha_2$. In fact, $\alpha_k \notin \left\{\alpha_0,\alpha_1,\alpha_2\right\}$. Set $\alpha_k=\alpha_3.$ The vertex subset $\left\{u_iv_0,u_{i+2}v_0,u_{i+1}v_1,u_{i}v_2,u_{i+2}v_2\right\} \subset V(G'')$ induces a star $S_4 \subset G'$ with center $u_{i+1}v_1$. Since $\lambda_1^1(S_4)=4$, if $l(u_iv_2)=\alpha_3,$ then $l(u_{i+2}v_2)=\alpha_4$. Let $A$ and $B$ be vertex subsets of $V(G'),$ such that $A=\left\{u_iv_4,u_{i+2}v_4\right\}$ and $B=\left\{u_iv_2,u_{i+2}v_2,u_{i+1}v_5,u_1v_6,u_{i+2}v_6\right\}$. Clearly, $d(u,v) \leq 2$ for all $u \in A$ and $v \in B$. Then, $l(u_iv_4), l(u_{i+2}v_4)$ $\notin \left\{\alpha_0,\alpha_1,\alpha_2,\alpha_3,\right\}$. Therefore, since $\lambda_1^1(S_4)=4$, $l(u_iv_4)=\alpha_3=l(u_{i+2}v_4).$ But $d(u_iv_4,u_{i+2}v_4)=2$. This a contradiction and hence, $\lambda_1^1(P_m \times C_3) \geq 5.$ \\Claim: Let $\alpha_k \ L(V_i),$ then $\alpha_k \notin V_{i+2}$, for $V_i$,$V_{i+2} \in V(G')$.  \\Reason: For all $v \in V_i$, $u \in V_{i+2},$ $d(u,v) \leq 2.$ \\Now, let $U_i=\left\{u_iv_0,u_iv_2,u_1v_4\right\}$, $U_{i+1}=\left\{u_{i+1}v_1,u_{i+1}v_3,u_{i+1}v_5\right\}$, $U_i,U_{i+1} \subset V(G'').$ $l(u_iv_j)$ labels $u_{i+1}$ for all $v_j,u_k$ in $U_iU_{i+1}$ respectively where $\left|k-j\right|=3$ since $d(u_iv_j,u_iv_k)=3$. Therefore without loss of generality, we say $L(U_i)=L(U_{i+1})=\left\{\alpha_0,\alpha_1,\alpha_2\right\} \subset [5].$ Likewise, let $U_{i+2}=\left\{u_{i+2}v_0,u_{i+2}v_2,u_{i+2}v_4\right\}$ and $U_{i+3}=\left\{u_{i+3}v_1,u_{i+3}v_3,u_{i+3}v_5\right\}$, $U_{i+2},U_{i+3} \subset V(G'').$ $l(u_{i+2}v_l)$ labels $u_{i+3}v_p$ for all $v_l, v_p$ in $U_{i+2},U_{i+3}$ respectively, where $\left|l-p\right|=3$. Thus $L(U_{i+2})=L(U_{i+3})=\left\{\alpha_3,\alpha_4,\alpha_5\right\} \subset [5]$. Based on the last scheme, we have $L(U_a)=L(U_{a+4})$ for any $a \in \left[i,i+3\right]$, where $i \in [(m-1)(\epsilon)].$ Thus there exists a $5-L(1,1)-$labeling of $P_m \times C_3$ and thus $\lambda_1^1(P_m \times C_3) \leq 5$ and then the equality holds.
\end{proof}
\begin{cor} If $m \geq 3$, then, $\lambda_1^1(P_m \times C_6)=5$.
\end{cor}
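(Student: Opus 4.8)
The plan is to reduce $\lambda_1^1(P_m \times C_6)$ to the already-established value $\lambda_1^1(P_m \times C_3) = 5$ from Theorem \ref{viii}, exploiting the structural observations in Remark \ref{t}. The key point is that $C_6$ is bipartite (since $6$ is even), so by Remark \ref{t}(i) the product $P_m \times C_6$ is disconnected, splitting as $G_1 \cup G_2$ with $G_1 \cong G_2$ by Remark \ref{t}(ii). By Remark \ref{b}(iii), $\lambda_1^1(P_m \times C_6) = \max\{\lambda_1^1(G_1), \lambda_1^1(G_2)\} = \lambda_1^1(G_1)$. So everything comes down to identifying one component $G_1$ with a graph whose $\lambda_1^1$ we already know.

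First I would make precise the claim that a component $G_1$ of $P_m \times C_6$ is isomorphic to $P_m \times C_3$. Intuitively, the component containing $u_0 v_0$ picks up, in the $C_6$-coordinate, only the even-indexed vertices $v_0, v_2, v_4$ (when the $P_m$-coordinate is even) together with the odd-indexed $v_1, v_3, v_5$ (when the $P_m$-coordinate is odd); since in $C_6$ the three even vertices $v_0, v_2, v_4$ form a triangle at mutual distance $2$, and likewise $v_1, v_3, v_5$, one checks that the adjacency relation inherited by $G_1$ is exactly that of $P_m \times C_3$ (a vertex $u_i v_{2j}$ is joined to $u_{i\pm 1} v_{2j\pm 2}$, with the cyclic identification $v_6 = v_0$, i.e. the triangle structure). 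The cleanest way to write this is to invoke Remark \ref{t}(iii): for odd $n$, $P_m \times C_n$ is equivalent to a component of $P_m \times C_{2n}$; taking $n = 3$ gives $P_m \times C_3 \equiv$ (a component of) $P_m \times C_6 = G_1$.

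Then I would conclude: $\lambda_1^1(P_m \times C_6) = \lambda_1^1(G_1) = \lambda_1^1(P_m \times C_3) = 5$ for $m \geq 3$, the last equality being Theorem \ref{viii}.

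The main obstacle is purely bookkeeping rather than conceptual: one must verify carefully that the isomorphism $G_1 \cong P_m \times C_3$ respects \emph{distances up to two}, not merely adjacency, since $L(1,1)$-labeling constrains vertices at distance one or two. Because $G_1$ is an induced subgraph of $P_m \times C_6$ on a vertex set closed under the adjacency relation (it is a connected component), distances computed inside $G_1$ agree with those inside the relabeled $P_m \times C_3$, so no genuine difficulty arises — but the write-up should state explicitly that the bijection $v_0 \mapsto w_0$, $v_2 \mapsto w_1$, $v_4 \mapsto w_2$ on the cycle coordinate (with $P_m$ coordinate fixed) is a graph isomorphism, and hence preserves $\lambda_1^1$. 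An alternative, if one prefers to avoid the explicit isomorphism, is to run the same labeling scheme $L(U_a) = L(U_{a+4})$ constructed at the end of the proof of Theorem \ref{viii} directly on $P_m \times C_6$ and separately re-derive the lower bound $\geq 5$ from the star argument of Lemma \ref{j} and Lemma \ref{g} together with Lemmas \ref{r} and \ref{s}; but reusing Theorem \ref{viii} via Remark \ref{t}(iii) is far shorter.
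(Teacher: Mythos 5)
Your proposal is correct and follows essentially the same route as the paper: the paper's proof of this corollary is precisely the one-line reduction via Remark \ref{t}(iii) (taking $n=3$, so that $P_m \times C_3$ is a component of $P_m \times C_6$, and the two components are isomorphic) combined with Theorem \ref{viii}. Your additional care about the explicit component isomorphism and distance preservation is a reasonable elaboration of the same argument, not a different one.
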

\begin{proof} Follows from Remark \ref{t} (iii) and Theorem \ref{viii}.
\end{proof}
\begin{thm} If $m \geq 3$, then $\lambda_1^1(P_m \times C_4)=5$.
\end{thm}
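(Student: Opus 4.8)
The plan is to exploit that $C_4$ is bipartite, so that $P_m\times C_4$ breaks into two isomorphic components of a very simple shape, and then to determine $\lambda_1^1$ by a short counting lower bound together with one explicit labeling. By Theorem \ref{a} and Remark \ref{t}(i),(ii) write $P_m\times C_4=G_1\cup G_2$ with $G_1\cong G_2$, so by Remark \ref{b}(iii) it suffices to compute $\lambda_1^1(G_1)$. First I would unwind the definition of the direct product to describe $G_1$ concretely: setting $W_i=\{u_iv_j\in V(G_1)\}$ for $0\le i\le m-1$, each $W_i$ consists of exactly two vertices ($\{u_iv_0,u_iv_2\}$ for $i$ even and $\{u_iv_1,u_iv_3\}$ for $i$ odd), and since in $C_4$ both $v_0$ and $v_2$ are adjacent to both $v_1$ and $v_3$, the graph $G_1$ contains all four edges between $W_i$ and $W_{i+1}$ and no other edges; in particular each $W_i$ is independent and there is no edge between $W_i$ and $W_{i+2}$. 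Thus $G_1$ is the path $P_m$ with every vertex doubled and every edge replaced by a copy of $K_{2,2}$.

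For the lower bound, assuming $m\ge 3$, I would check that the six vertices of $W_0\cup W_1\cup W_2$ are pairwise at distance at most $2$ in $G_1$: two vertices of the same $W_i$ with $i\in\{0,1,2\}$ are joined by a length-two path through any vertex of an adjacent column (which exists because $m\ge 3$), two vertices of consecutive columns are adjacent, and a vertex of $W_0$ and a vertex of $W_2$ are joined by a length-two path through $W_1$. Hence any $L(1,1)$-labeling must give these six vertices six distinct labels, so $\lambda_1^1(G_1)\ge 5$ and therefore $\lambda_1^1(P_m\times C_4)\ge 5$. For the matching upper bound I would check that $l(u_iv_j)=2(i\bmod 3)+\lfloor j/2\rfloor$, defined on all of $V(P_m\times C_4)$ and taking values in $[5]$, is an $L(1,1)$-labeling, using the fact that two vertices of $P_m\times C_4$ at distance $1$ satisfy $|i-i'|=1$, while two at distance $2$ satisfy $|i-i'|\in\{0,2\}$, with $j'\equiv j+2\pmod 4$ in the case $i=i'$. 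When $|i-i'|=1$ the summands $2(i\bmod 3)$ differ by a nonzero even number, which beats the difference of the $\lfloor j/2\rfloor$ terms (at most $1$); when $i=i'$ and $j'\equiv j+2\pmod 4$, the term $\lfloor j/2\rfloor$ changes by exactly $1$; and when $|i-i'|=2$ the summands $2(i\bmod 3)$ again differ by a nonzero even number. In every case the two labels differ, so $\lambda_1^1(P_m\times C_4)\le 5$, and equality follows.

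The only genuinely delicate point is the structural observation that $C_4=K_{2,2}$ collapses the component $G_1$ to the doubled path above, whose square, restricted to any three consecutive columns, is a $K_6$; once this is seen, both bounds are routine. The things to watch are the precise list of distance-two pairs of $P_m\times C_4$ (on which the verification of the proposed labeling rests) and the role of the hypothesis $m\ge 3$, which is exactly what guarantees that each of the first three columns has an adjacent column.
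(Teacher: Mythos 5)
Your proof is correct. The skeleton matches the paper's: both split $P_m\times C_4$ into two isomorphic components via Remark \ref{t}, prove the lower bound $5$, and obtain the upper bound from a column-labeling of period $3$ in the $P_m$-direction with two labels per column. Where you genuinely diverge is the lower bound. The paper views the component as a quotient of a component of $P_m\times P_5$ with $u_rv_0\equiv u_rv_4$ and then invokes Lemma \ref{s} to force a sixth label; your argument instead identifies the component as a doubled path (each edge of $P_m$ blown up to a $K_{2,2}$) and observes that the six vertices of three consecutive columns $W_0\cup W_1\cup W_2$ are pairwise at distance at most two, so they form a clique in the square and need six distinct labels outright. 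This is more elementary and self-contained --- it avoids the bookkeeping of Lemma \ref{s} and the wraparound identification entirely --- and it makes transparent exactly where $m\ge 3$ is used. For the upper bound, the paper's scheme ($L(V_i)=L(V_{i+3k})$ with disjoint label pairs on three consecutive columns) is the same idea as yours, but your closed formula $l(u_iv_j)=2(i\bmod 3)+\lfloor j/2\rfloor$ makes the verification concrete: the distance classification ($|i-i'|=1$ for adjacency; $|i-i'|\in\{0,2\}$ at distance two, with $j'\equiv j+2\pmod 4$ when $i=i'$) is right, and in each case the even term $2(i\bmod 3)$ changes by $\pm 2$ or $\pm 4$ while $\lfloor j/2\rfloor$ changes by at most $1$, or else the $\lfloor j/2\rfloor$ term alone changes by exactly $1$, so no two labels collide. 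Both bounds hold and the equality follows.
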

\begin{proof} From Remarks \ref{t} (ii) and (iii),  $P_m \times C_4=G_1 \cup G_2$, where $G_1,G_2$ are isomorphic connected components of $P_m \times C_4$. Let $u_iv_0,u_iv_4 \in V(G_1)$, say, for all $i \in [(m-1)(\epsilon)]$, such that $u_iv_0 \equiv u_iv_4$ then by Remark \ref{t}(iv) , $G_1$ is equivalent to a connected component of $P_m \times P_5$. Now, let $G'_1 \subseteq G_1 $ be a subgraph of $G_1$ with \ \\ $V(G'_1)$=$\left\{u_rv_0,u_{r+2}v_0,u_{r+1}v_1,u_rv_2,u_{r+2}v_2,u_{r+1}v_3,u_rv_4,u_{r+2}v_4\right\}$, where $r \leq m-4$. Obviously, $u_rv_0 \equiv u_rv_4 $ and $u_{r+2}v_0 \equiv v_4.$ Thus, $l(u-rv_0)$=$l(u_rv_4)=\alpha_i$ and $l(u_{r+2}v_0)= l(u_{r+2}v_4)=\alpha_j$, $\alpha_i,\alpha_j \in [4]$. By Lemma \ref{s}, there exists a vertex $v \in V(G'_1)$ such that $l(v) \notin [4]$. Thus $\lambda_1^1(G'_1) \geq 5$ and therefore, $\lambda_1^1(G_1) \geq 5$ and finally, $\lambda_1^1(P_m \times C_4) \geq 5.$ Now, for any pair $v_a,v_b \in V(G,_1)$, $d(v_a,v_b)\leq 2.$ Thus $L(V_i) \cap L(V_{i+1})= \emptyset$ and $L(V_i) \cap L(V_{i+2})= \emptyset$. However, $L(V_i)$ labels $L(V_{}i+3)$ since $d(v_a,v_c)=3$ for all $v_a \in V_i$ and $v_c \in V_{i+3}$. Thus, $L(V_i)=L(V_{i+3k})$, $L(V_{i+1})=L(V_{i+4k})$ and $L(V_{i+2})=L(V_{i+5k})$ for all $k \in {\mathbb N}$. since $\left|V(G'_1)\right|=6$, then $\lambda_1^1(P_m \times C_4) \leq 5$ and therefore, the equality follows.
\end{proof}
\begin{thm} If $m \geq 3$, then $\lambda_1^1(P_m \times C_5)=4$.
\end{thm}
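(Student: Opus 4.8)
The plan is to prove $\lambda_1^1(P_m \times C_5) = 4$ in two parts: the lower bound $\lambda_1^1(P_m \times C_5) \geq 4$ and the matching upper bound $\lambda_1^1(P_m \times C_5) \leq 4$. For the lower bound, I would invoke the structural results already established. By Lemma \ref{j}, whenever $m \geq 3$ there is an interior vertex $u_i$ with $d_{u_i} = 2$, and since $C_5$ is $2$-regular, the vertex $u_i v_j$ sits at the center of a star $K_{1,4}$ inside $P_m \times C_5$. By Lemma \ref{g}, $\lambda_1^1(K_{1,4}) = 4$, so $\lambda_1^1(P_m \times C_5) \geq 4$. (This uses $m \geq 3$; note the statement restricts to $m \geq 3$.)

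For the upper bound I would exhibit an explicit $L(1,1)$-labeling using labels from $[4] = \{0,1,2,3,4\}$. The natural candidate, echoing the formula $l(u_iv_j) = \lfloor (i+3j)/2 \rfloor \bmod 5$ from Proposition \ref{l}, is to set $l(u_i v_j) = \left\lfloor \tfrac{i+3j}{2} \right\rfloor \bmod 5$ for $u_i v_j \in V(P_m \times C_5)$, where now $j$ is read modulo $5$. The key point making $C_5$ work where a general $C_n$ would not is that $3 \cdot 5 = 15$, and one must check that wrapping $v_5$ back to $v_0$ is consistent: shifting $j$ by $5$ changes $i + 3j$ by $15$, and within a fixed component of $P_m \times C_5$ — actually $P_m\times C_5$ is connected since $C_5$ is non-bipartite (Theorem \ref{a}) — one needs to verify that $\lfloor (i + 3j + 15)/2 \rfloor \equiv \lfloor (i+3j)/2 \rfloor \pmod 5$ along the relevant parity class, i.e. that $15/2$ behaves well modulo $5$ after the floor. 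Equivalently, I may prefer to describe the labeling directly on the connected graph via Remark \ref{t}(iii)-(iv), identifying $P_m \times C_5$ with a component of $P_m \times P_{11}$ in which $u_i v_0 \equiv u_i v_{10}$, and then restrict the Proposition \ref{l} labeling, using the observation in Remark \ref{n} that $l(u_i v_{10}) = l(u_i v_0)$ and that the labels on the "seam" vertices $U_1$ and $U_9$ (respectively $V_1$ and $V_9$) avoid conflicts. That is exactly the content Remark \ref{n} was set up to provide.

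The verification then splits into: (i) any two vertices adjacent in $P_m \times C_5$ receive distinct labels, and (ii) any two vertices at distance $2$ receive distinct labels. Both reduce to a local computation on the $3 \times 5$ (or $3 \times 3$) patch around a vertex, since in a direct product of paths/cycles the closed second neighborhood of $u_i v_j$ is contained in $\{u_{i-2},\dots,u_{i+2}\} \times \{v_{j-2},\dots,v_{j+2}\}$; for the non-seam vertices this is identical to the check already done implicitly in Proposition \ref{l}, and for the seam vertices it is precisely the list of non-inclusions recorded in Remark \ref{n}. I would organize the check as: fix $u_i v_j$, compute $l$ on its four diagonal neighbors $u_{i\pm1} v_{j\pm1}$ and on its distance-two vertices $u_{i\pm2} v_j$, $u_i v_{j\pm2}$, $u_{i\pm2}v_{j\pm2}$, and confirm all five local labels are distinct residues mod $5$ — which they are, because in the patch the quantity $\lfloor (i+3j)/2 \rfloor$ runs through a complete residue system mod $5$ by the choice of coefficient $3$ (coprime to $5$) and the wrap-around contributes $0 \bmod 5$.

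The main obstacle I anticipate is handling the floor function together with the mod-$5$ wrap-around cleanly: unlike the Cartesian-product case, here the two coordinates move together, and one must be careful that $\lfloor (i+3j)/2\rfloor \bmod 5$ really is well-defined on $C_5$ (the shift $j \mapsto j+5$ must not change the label), and that parity of $i+3j$ is consistent around the cycle within the connected graph. Once that bookkeeping is pinned down — most transparently by passing through the $P_m \times P_{11}$ model of Remark \ref{t}(iv) and citing Remark \ref{n} for the seam — the distance-one and distance-two conditions follow from the same bounded local analysis used for $P_m \times P_n$, and combining with the lower bound gives $\lambda_1^1(P_m \times C_5) = 4$ for all $m \geq 3$.
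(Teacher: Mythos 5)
Your proposal is correct and follows essentially the same route as the paper: the lower bound comes from the $K_{1,4}$ star (Lemmas \ref{j} and \ref{g}), and the upper bound comes from identifying $P_m\times C_5$ with a connected component of $P_m\times C_{10}$ and inheriting the $\left\lfloor (i+3j)/2\right\rfloor \bmod 5$ labeling of Proposition \ref{l} via Remark \ref{n} --- which is exactly the content of the paper's chain $\lambda_1^1(P_m\times C_5)\le\lambda_1^1(P_m\times C_{10})\le\lambda_1^1(C_{10m'}\times C_{10n'})=4$ through Corollary \ref{pp1}. One caution: the first variant you float, applying the formula directly on $C_5$ with $j$ read modulo $5$, genuinely fails, since $\left\lfloor (i+3j+15)/2\right\rfloor-\left\lfloor (i+3j)/2\right\rfloor$ equals $7$ or $8$, neither of which is $0\bmod 5$; only your fallback through the $P_m\times P_{11}$ (equivalently $C_{10}$) model, where the seam shift is by $30$ and hence by $15\equiv 0\bmod 5$ after the floor, is valid, so that step should be stated as the actual construction rather than as an equivalent alternative.
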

\begin{proof} Clearly, $P_m \times C_5 \equiv G_1,$ where $G_1$ is a connected component of $P_m \times C_{10}$. Therefore, $\lambda_1^1(P_m \times C_5) \leq$ $\lambda_1^1(P_m \times C_{10}) \leq$ $\lambda_1^1(C_{10m'} \times C_{10n'})=4$, for all $m',n'\in \mathbb N$. Now, since there exists a star $S_4 \subset P_m \times C_5$, then $\lambda_1^1(P_m \times C_5) \geq 5$.
\end{proof}
 The last theorem clearly yields the next corrolary.
\begin{cor} For all $m \geq 3, n' \in \mathbb N,$ $\lambda_1^1(P_m \times C_{5n'})=4$.
\end{cor}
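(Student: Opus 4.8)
The final statement to prove is the corollary $\lambda_1^1(P_m \times C_{5n'}) = 4$ for all $m \geq 3$ and $n' \in \mathbb{N}$.

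\medskip

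The plan is to reduce the general case to the already-established result $\lambda_1^1(P_m \times C_5) = 4$ (the immediately preceding theorem) by exploiting the decomposition/covering relationship between $C_{5n'}$ and $C_5$. First I would note the lower bound: for any $m \geq 3$, $P_m \times C_{5n'}$ contains a copy of the star $K_{1,4}$ — this follows exactly as in Lemma~\ref{j} and Proposition~\ref{l}, since we can find a vertex $u_i v_j$ with both $u_i$ of degree $2$ in $P_m$ (using $m \geq 3$) and $v_j$ of degree $2$ in $C_{5n'}$ (automatic, since every cycle vertex has degree $2$, and $5n' \geq 5 \geq 3$). By Lemma~\ref{g}, $\lambda_1^1(K_{1,4}) = 4$, so $\lambda_1^1(P_m \times C_{5n'}) \geq 4$.

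\medskip

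For the upper bound, the key observation is that there is a graph homomorphism $C_{5n'} \to C_5$ given by $v_j \mapsto v_{j \bmod 5}$ (valid precisely because $5 \mid 5n'$, so adjacent vertices map to adjacent vertices), which induces a homomorphism $P_m \times C_{5n'} \to P_m \times C_5$ that is the identity on the $P_m$ coordinate. The subtlety is that an $L(1,1)$-labeling is \emph{not} preserved under arbitrary homomorphisms — a homomorphism can collapse vertices at distance $2$, but it can also create spurious distance-$2$ relations that need not have existed. So instead I would argue directly: take the optimal $L(1,1)$-labeling $\ell_0$ of $P_m \times C_5$ produced in the preceding theorem (which ultimately comes from the formula $\ell(u_i v_j) = \lfloor (i + 3j)/2 \rfloor \bmod 5$ of Proposition~\ref{l} restricted to the relevant component, via Remarks~\ref{n} and~\ref{t}), and define $\ell(u_i v_j) := \ell_0(u_i \, v_{j \bmod 5})$ on $P_m \times C_{5n'}$. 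Then I would verify that $\ell$ is a genuine $L(1,1)$-labeling by checking that any two vertices of $P_m \times C_{5n'}$ at distance $1$ or $2$ map to vertices of $P_m \times C_5$ that are \emph{also} at distance $1$ or $2$ (hence receive distinct labels). This local check only involves vertices whose second coordinates differ by at most $2$ in the cycle $C_{5n'}$; since $5n' \geq 5 > 2 \cdot 2$, no wrap-around collision occurs, and the neighborhood structure in $P_m \times C_{5n'}$ around any vertex is isometrically (for radius $\leq 2$) the same as in $P_m \times C_5$ — equivalently, both are locally the same as in $P_m \times P_\infty$. Therefore distinctness of labels transfers, and $\ell$ uses only labels in $[4]$, giving $\lambda_1^1(P_m \times C_{5n'}) \leq 4$.

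\medskip

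Combining the two bounds yields $\lambda_1^1(P_m \times C_{5n'}) = 4$. The main obstacle is the upper-bound verification: one must be careful that pulling back the labeling along $v_j \mapsto v_{j \bmod 5}$ does not merge two vertices of $P_m \times C_{5n'}$ that are genuinely at distance $\leq 2$ into a single vertex of $P_m \times C_5$ with only one label — but this cannot happen because such a merge would require the second coordinates to be congruent mod $5$ while differing by at most $2$ in $C_{5n'}$, which is impossible when $5n' \geq 5$. Once that point is dispatched, everything else is bookkeeping already carried out in the cited results.
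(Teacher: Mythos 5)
Your proof is correct, and it takes a genuinely different route from the paper, which gives no argument for this corollary beyond the sentence ``the last theorem clearly yields the next corollary.'' The paper's implicit mechanism is to go \emph{upward}: $P_m\times C_5$ is identified with a component of $P_m\times C_{10}$, which in turn sits inside $C_{10m'}\times C_{10n'}$, and the $4$-labeling of that large torus (the formula $\lfloor (i+3j)/2\rfloor\bmod 5$ of Proposition~\ref{l} together with the periodicity observations of Remark~\ref{n}) is restricted to the subgraph; the general $C_{5n'}$ case is meant to follow the same way. You instead go \emph{downward}: you pull the $4$-labeling of $P_m\times C_5$ back along the quotient homomorphism $C_{5n'}\to C_5$, $v_j\mapsto v_{j\bmod 5}$, and you check the one point where such pullbacks can fail, namely that the quotient never identifies two vertices at distance at most $2$ in $P_m\times C_{5n'}$ (it cannot, since identified vertices have second coordinates congruent mod $5$ and hence at cyclic distance at least $5$). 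This buys you a self-contained argument that needs only the single preceding theorem as input, rather than the paper's chain of remarks about components of $P_m\times C_{2n}$ and of $C_{10m'}\times C_{10n'}$. One small overstatement: the radius-$2$ balls in $P_m\times C_{5n'}$ and $P_m\times C_5$ are \emph{not} isometric for $n'\ge 2$ (in $C_5$ the vertices $v_{j-2}$ and $v_{j+2}$ are adjacent, whereas in $C_{5n'}$ they are far apart), but your argument never actually needs isometry --- only the one-directional implication that distance $\le 2$ upstairs forces distinct images at distance $\le 2$ downstairs, and the extra adjacencies downstairs only impose constraints that $\ell_0$ already satisfies. Your lower bound via the star $K_{1,4}$ and Lemma~\ref{g} is the same as the paper's.
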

\begin{lem} \label{1} Suppose $G'$ is a connected component of $P_3 \times P_n$, $n \geq 9$, such that $u_iv_j,u_iv_k \in V(G').$ If $d(u_iv_j,u_iv_k)=8,$ then $l(u_iv_j) \neq l(u_iv_k).$
\end{lem}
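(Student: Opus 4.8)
The statement concerns a connected component $G'$ of $P_3 \times P_n$ with $n \geq 9$, two vertices $u_iv_j, u_iv_k$ in the \emph{same} row $V_i$ (same first coordinate), at distance exactly $8$ inside $G'$, and claims they cannot receive the same label under an optimal $L(1,1)$-labeling (which, by Proposition \ref{l}, uses labels in $[4]$). The plan is to argue by contradiction: assume $l(u_iv_j) = l(u_iv_k) = \alpha$ and derive that some vertex on the path between them is forced out of $[4]$, contradicting $\lambda_1^1(P_m \times P_n) \le 4$.

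First I would set up the local geometry. Since $P_3$ has middle vertex $u_1$ of degree $2$, the rows behave differently depending on whether $i = 1$ or $i \in \{0,2\}$; in a connected component the relevant path from $u_iv_j$ to $u_iv_k$ moves by steps of the form $u_i v_\ell \to u_{i'} v_{\ell\pm1} \to u_i v_{\ell\pm2}$, so a distance-$8$ same-row pair means $|j - k| = 8$ after identifying the shortest route, and the intermediate vertices $u_i v_j, u_{i\pm1}v_{j+1}, u_i v_{j+2}, \dots, u_i v_{j+8}$ all lie in $G'$. This is exactly the kind of ladder-of-stars configuration handled in Lemmas \ref{r} and \ref{s}: each triple $\{u_i v_{j+2t}, u_{i+1}v_{j+2t+1}, u_i v_{j+2t+2}, \dots\}$ together with the off-row neighbours forms a $K_{1,4}$ (when $i=1$, using Lemma \ref{j}) or a smaller star. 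I would then iterate Lemma \ref{s}: knowing the two labels at the $v_{j}$-end of such a block, Lemma \ref{s} tells us the two labels at the $v_{j+4}$-end avoid them; applying it twice (from $v_j$ to $v_{j+4}$ to $v_{j+8}$) constrains the label set at $v_{j+8}$, and combined with the distance-$2$ constraints coming from the center vertices $u_{i+1}v_{j+1}, u_{i+1}v_{j+3}, \dots$ (as in Lemmas \ref{r}, \ref{t}, and the argument in Theorem \ref{viii}), the value $\alpha$ at $u_iv_{j+8} = u_iv_k$ becomes unavailable in $[4]$.

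The cleanest route is probably to piggyback on the period-$5$ structure already exploited in Remark \ref{n} and Corollary \ref{pp1}: the explicit labeling $l(u_iv_j) = \lfloor (i+3j)/2 \rfloor \bmod 5$ has the property that along a fixed row the label repeats with period related to $5$, not $8$, so a same-label pair at row-distance $8$ would force a collision at graph-distance $2$ somewhere in between. I would make this precise by tracking $l$ along the subpath $u_iv_j, u_{i+1}v_{j+1}, u_iv_{j+2}, u_{i+1}v_{j+3}, u_iv_{j+4}, u_{i+1}v_{j+5}, u_iv_{j+6}, u_{i+1}v_{j+7}, u_iv_{j+8}$ of length $8$, noting that consecutive entries differ (adjacency), entries two apart differ (distance $2$), and entries three apart may coincide — so the label sequence is governed by a proper $3$-colouring-like pattern on $9$ consecutive positions; the only way position $0$ and position $8$ agree while respecting all distance-$\le 2$ constraints forces the intermediate off-row vertices $u_{i\pm1}v_\ell$ to exhaust $[4]$ prematurely, yielding the needed fifth label.

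The main obstacle I anticipate is the case analysis on the shape of the shortest $u_iv_j$–$u_iv_k$ path: in a component of $P_3 \times P_n$ a same-row pair can be joined by routes that bounce between rows $0,1,2$ in several ways, and one must check that \emph{every} such route of length $8$ produces the forced star-ladder so that Lemma \ref{s} applies at each step (in particular handling the boundary rows $u_0, u_2$ where the relevant vertices have degree $2$ rather than participating in a full $K_{1,4}$). I would organize this by first reducing to the "straight" route $u_i, u_{i+1}, u_i, u_{i+1}, \dots$ using that $G'$ is a component and that any detour only adds distance-$\le 2$ constraints (hence only makes the label set more restrictive), and then running the two-fold application of Lemma \ref{s} plus one distance-$2$ observation to close the contradiction.
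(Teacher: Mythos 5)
Your overall strategy (contradiction against $\lambda_1^1\le 4$) is right, but the engine you propose does not turn over. The central step, ``apply Lemma \ref{s} twice, from column $v_j$ to $v_{j+4}$ to $v_{j+8}$,'' does not compose: Lemma \ref{s} forbids the pair of labels on $\{u_0v_r,u_2v_r\}$ from reappearing on $\{u_0v_{r+4},u_2v_{r+4}\}$, and a second application forbids the $v_{r+4}$-labels from reappearing at $v_{r+8}$ --- but that leaves the original $v_r$-labels entirely free to return at $v_{r+8}$, which is exactly what you need to exclude. Indeed they can return: in the component of $P_3\times P_{11}$ consisting of the $u_iv_j$ with $i+j$ odd, the assignment $l(u_1v_0)=2$, $(l(u_0v_1),l(u_2v_1))=(0,1)$, $l(u_1v_2)=4$, $(l(u_0v_3),l(u_2v_3))=(2,3)$, $l(u_1v_4)=1$, $(l(u_0v_5),l(u_2v_5))=(0,4)$, $l(u_1v_6)=3$, $(l(u_0v_7),l(u_2v_7))=(1,2)$, $l(u_1v_8)=4$, $(l(u_0v_9),l(u_2v_9))=(0,3)$, $l(u_1v_{10})=1$ is a valid span-$4$ $L(1,1)$-labeling in which $l(u_0v_1)=l(u_0v_9)=0$ while $d(u_0v_1,u_0v_9)=8$. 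So for the outer rows $i\in\{0,2\}$ no argument can force the stated conclusion, and the boundary-row case you flag as a worry is precisely where the approach breaks down. The statement is only provable for the middle row $i=1$, where $u_1v_j$ and $u_1v_k$ are centres of induced stars $K_{1,4}$; that is the only case the paper's proof actually treats (it opens with $l(u_1v_j)$, $l(u_1v_k)$) and the only case invoked in the later applications.

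The paper's mechanism is also different from yours, and you will need it: it combines Lemma \ref{q} (each label must recur within distance $4$ of any of its occurrences) with Lemma \ref{r} (two star centres at distance $4$ cannot share a label), so that from $u_1v_j$ the label $\alpha=l(u_1v_j)$ is forced to reappear at distance exactly $3$, at $u_0v_{j+3}$ or $u_2v_{j+3}$, and from there it is at distance $5$ from $u_1v_{j+8}$, so tracking the forced recurrences of $\alpha$ through the intervening columns rules out $l(u_1v_{j+8})=\alpha$. Your alternative suggestion --- reading off the period of the particular labeling $l(u_iv_j)=\left\lfloor (i+3j)/2\right\rfloor \bmod 5$ --- cannot work even in principle, since the lemma quantifies over every optimal labeling, not over that one witness. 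To repair the write-up: drop the Lemma \ref{s} iteration and the period-$5$ heuristic, restrict the claim to the centre row $i=1$, and run the spacing argument with Lemmas \ref{q} and \ref{r}.
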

\begin{proof} Suppose $\alpha_j,\alpha_k\in [4]$ and $\alpha_j=l(u_1v_j)$, $\alpha_k=l(u_1v_k)$, while $d(u_1v_j,u_1v_k)=8.$ The next vertex, by Lemmas \ref{q} and \ref{r}, that $\alpha_j$ labels is either $u_2v_{j+3}$ and $u_2v_{j+3}$. Now, since $d(u_0v_{j+3},u_1v_k)=5$, then by Lemma \ref{q}, $\alpha_j \neq l(u_1v_k)$. Thus, $\alpha_k \neq \alpha_j$.
\end{proof}
\begin{thm} \label{a1} For $m \geq 3$, $\lambda_1^1(P_m \times C_7)=5$.
\end{thm}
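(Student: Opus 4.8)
The plan is to prove the two inequalities $\lambda_1^1(P_m\times C_7)\ge 5$ and $\lambda_1^1(P_m\times C_7)\le 5$ separately. For the lower bound, since $m\ge 3$ and $C_7$ is non-bipartite, the subgraph of $P_m\times C_7$ induced by three consecutive columns $\{u_iv_j:i\in\{0,1,2\}\}$ is exactly $P_3\times C_7$, and any $L(1,1)$-labeling of $P_m\times C_7$ restricts to one of this induced subgraph. By Remark~\ref{t}(iii)--(iv), $P_3\times C_7$ is in turn isomorphic to a connected component $G'$ of $P_3\times P_{15}$ in which $u_iv_0$ is identified with $u_iv_{14}$ along one parity class of columns, so that $l(u_iv_0)=l(u_iv_{14})$ there; since $15\ge 9$, the conclusion of Lemma~\ref{1} is available on $G'$. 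I would then argue by contradiction, assuming an $L(1,1)$-labeling of $P_m\times C_7$ into $[4]=\{0,1,2,3,4\}$ exists. Because $\lambda_1^1$ of a direct product of paths is $4$ (Proposition~\ref{l}), the structural Lemmas~\ref{q}, \ref{r} and \ref{s} all apply to $G'$.

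The heart of the argument is a label chase on $G'$ in the spirit of the proof of Theorem~\ref{viii}. By Lemma~\ref{s}, the restriction of the labeling to any column of $G'$ is forced to be a proper coloring of the \emph{square} of the $7$-cycle carried by that column (the distance-two pairs and the Lemma~\ref{s} distance-four pairs together give all offsets $\pm1,\pm2$ in the natural reordering $v_0,v_2,v_4,v_6,v_1,v_3,v_5$), so the column colorings are rigid. By Lemma~\ref{1}, together with the seam identification $u_iv_0\equiv u_iv_{14}$, the column pairs of the form $\{u_iv_a,u_iv_{a+8}\}$ — which after folding are ``diametral'' pairs inside the $7$-cycle — must receive distinct labels, and this kills several of the color classes that such a square-of-$C_7$ coloring would otherwise allow. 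Feeding what survives into the distance-two interaction between the columns $V_0$ and $V_2$ (note $u_0v_a$ is at distance two from $u_2v_a$ and from $u_2v_{a\pm2}$, whereas adjacent columns only interact along edges) one should be able to track the singleton and doubled color classes of $V_0$ and $V_2$ all the way around the $7$-cycle and reach an inconsistency — the same ``wrap-around'' bookkeeping (there of the form $L(U_a)=L(U_{a+4})$) that produced the contradiction in the $C_3$ case. This gives $\lambda_1^1(P_m\times C_7)\ge 5$.

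For the upper bound I would exhibit an explicit $L(1,1)$-labeling of $P_m\times C_7$ into $\{0,1,\dots,5\}$. One route is a pattern periodic along $P_m$: three column-templates $P^{(0)},P^{(1)},P^{(2)}\colon\mathbb{Z}_7\to\{0,\dots,5\}$ assigned to columns according to $i\bmod 3$ (in the style of the $L(U_a)=L(U_{a+4})$ and $L(V_i)=L(V_{i+3k})$ schemes used for $C_3$ and $C_4$); the only conditions to verify are that each template properly colors the ``distance-two'' $7$-cycle, that templates on adjacent columns avoid the offsets $\pm3$, and that templates two columns apart avoid the offsets $0,\pm1$ — a finite check. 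Alternatively one may take the optimal labeling $l(u_iv_j)=\lfloor(i+3j)/2\rfloor\bmod 5$ of $P_m\times P_{15}$ from Proposition~\ref{l} and patch it in a short band of columns near the seam $v_{14}\equiv v_0$ using the sixth label $5$, so that the patched labeling descends to the quotient $P_m\times C_7$. Combining the two bounds yields $\lambda_1^1(P_m\times C_7)=5$.

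The main obstacle is the lower bound. Unlike the cliques that drive $\lambda_1^1\ge 4$, the graph $(P_m\times C_7)^2$ contains a $K_5$ (the closed neighbourhood of an interior vertex) but no apparent $K_6$, so the failure of a $5$-label labeling is not witnessed by a clique and must be forced by the global interplay of Lemmas~\ref{q}, \ref{r}, \ref{s}, \ref{1} and the cyclic identification — i.e. by a careful and somewhat lengthy case analysis, exactly as in the treatment of $P_m\times C_3$.
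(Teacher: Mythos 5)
Your overall architecture matches the paper's: fold $P_m\times C_7$ into a component of $P_m\times P_{15}$ with $u_iv_0\equiv u_iv_{14}$, use Lemmas~\ref{q}, \ref{r}, \ref{s}, \ref{1} to force a contradiction for the lower bound, and exhibit a $6$-label scheme for the upper bound. But both halves have genuine gaps. For the lower bound the decisive step is only asserted (``one should be able to track \dots and reach an inconsistency''), whereas the paper actually carries out a specific chase, pinning down $l(u_{i+1}v_3)$, $l(u_{i+1}v_7)$ and $l(u_{i+1}v_{11})$ in turn before invoking Lemma~\ref{1}. Worse, the one concrete mechanism you do offer is unsound: after the identification $v_0\equiv v_{14}$, a pair $\{u_iv_a,u_iv_{a+8}\}$ is at distance $6$, not $8$ (go the short way around the cycle), so the hypothesis of Lemma~\ref{1} fails for it. And if those pairs really were all forced to receive distinct labels, then together with the offsets $\pm2$ (distance two) and $\pm4$ (Lemma~\ref{r}) \emph{every} pair inside a set $V_i$ would be forced to differ, i.e.\ each $V_i$ would need $7$ labels and you would have ``proved'' $\lambda_1^1\ge 6$ --- which contradicts the very theorem you are proving. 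That is a sign the sketch, as written, cannot be completed in the form proposed.

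For the upper bound, your primary construction provably does not exist. If columns $i$ and $i+3$ carry the same template then, for $m\ge 5$, every unordered pair of distinct templates occurs both on adjacent columns and on columns two apart, so your own conditions require $P^{(a)}(j)\ne P^{(b)}(j+\delta)$ for all $a\ne b$, all $j$ and all $\delta\in\{0,\pm2\}$ (original $C_7$ indexing), together with $P^{(a)}(j)\ne P^{(a)}(j\pm2)$. Hence the six values $P^{(0)}(j),P^{(1)}(j),P^{(2)}(j),P^{(0)}(j+2),P^{(1)}(j+2),P^{(2)}(j+2)$ are pairwise distinct, so the triple $T_j=\{P^{(a)}(j):a\}$ satisfies $T_{j+2}=\{0,\dots,5\}\setminus T_j$; iterating around $\mathbb{Z}_7$, where $2$ is a generator and $7$ is odd, yields $T_j=\{0,\dots,5\}\setminus T_j$, a contradiction. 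Your ``finite check'' would therefore come up empty. The fallback --- patching the $\left\lfloor (i+3j)/2\right\rfloor \bmod 5$ labeling near the seam with the extra label $5$ --- is not carried out and cannot be assessed. As it stands, neither inequality is established.
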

\begin{proof} Suppose $\lambda_1^1(P_m \times C_7)=4$. Clearly from an earlier remark, $P_m \times C_7 \equiv G'$ where $G'$is a connected component of $P_m \times C_{14}$. Also, $G' \equiv G'',$ where $G''$ is the connected component of $P_m \times P_{15}$, with $u_iv_0 \equiv u_iv_{14}$ for all $i\in [(m-1)(\epsilon)]$. Suppose $\bar{G}$ is a subgraph of $G''$ induced by the vertex set $U_i, U_{i+1}$ and $U_{i+2}$ such  that $u_iv_0 \in U_i$, and $u_{i+2}v_0 \in U_{i+2}$. Let $\left\{\alpha_i\right\}_{i=0}^4 =[4]$ and suppose $\alpha_0,\alpha_1,\alpha_2,\alpha_3,\alpha_4,$ labels $u_iv_0, u_{i+2}v_0$ $u_{i+1}v_1,u_{i+2}v_0$ $u_{i+2}v_2$. Then $l(u_0v_{14})= \alpha_0$ and $l(u_2v_{14})= \alpha_1$. Since $d(u_{i+1}v_1,u_{i+2}v_{13})=2$, then $l(u_{i+1}v_{13}) \in\left\{\alpha_3, \alpha_4\right\}.$ Without loss of generality, let $l(u_iv_{13})= \alpha_3$. Then $L(u_0v_{12},u_2v_{12})=\left\{\alpha_2,\alpha_4\right\}$. Now, $d(u_{i+j}v_k,u_{i+1}v_7)=5$ for all $j \in \left\{0,2\right\}$, $k \in \left\{2,12\right\}$. Thus, by Lemma \ref{q}, $l(u_{i+1}v_7) \in A=\left\{\alpha_2, \alpha_3, \alpha_4\right\}$. Also, by the reason of distance, $l(u_{i+1}v_3) \in A$. Thus, $l(u_{i+1}v_3)$ is either $\alpha_0$ or $\alpha_1$. Again without loss of generality, suppose $l(u_{i+1}v_3)= \alpha_0$. By Lemma \ref{r}, $l(u_{i+1}v_7) \neq \alpha_0$. Thus, $l(u_{i+1}v_7) = \alpha_1. $ Since $l(u_{i+1}v_7)= \alpha_1,$ then $l(u_{i+1}v_{11}) \neq \alpha_1.$ Therefore, $l(u_{i+1}v_{11}) \notin \left\{\alpha_1 \cup A\right\}$ and hence, $l(u_{i+1}v_{11})= \alpha_0.$ But contradicts Lemma \ref{1} since $d(u_{i+1}v_3, u_{i+1}v_{11})=8$ and it is assumed that $\lambda_1^1(P_m \times C_7)=4$. Thus, $\lambda_1^1(P_m \times C_7) \geq 5.$ Conversely, for each $i \in [m-1]$, $\left|V_i\right|=7$, where $V_i \subset V(G')$. Therefore, suppose $\left|L(V_i)\right|=6$, then there exists a pair $v_1,v_2 \in V_i$ such that $l(v_i)=l(v_2)=\alpha_k$ for some $\alpha_k \in [5]$. Now, set $u_1=u_iv_j$ and $u_2=u_iv_{j+4}$ such that $d(u_iv_j,v_{j+4})$ $d(u_1,u_2)=4$. Let $\bar{V_1}=V_i \backslash \left\{u_iv_j\right\}$. Set $\alpha_j=l(u_kv_l)=l(u_{k+1}v_{l+3})$ for all u $u_kv_j \in \bar{V_1}$. Now, there exists $u_3=u_{k+3}v_{j+3} \in V_{i+1}$ such that $u_3$ is not yet labeled. Let $u_4=u_{k+1}v_{j-1}$ and set $l(u_{k+1}v_{j-1})=l(u_{k+1}v_{j+3})$. Obviously, $d(u_3,u_4)=4$ and $u_3,u_4 \in V_{i+1}.$ Repeat the above scheme between $V_{i+1}$ and $V_{i+2}$, $V_{i+2}$ and $V_{i+3},...,$ $V_{m-2},V_{m-1}.$ Thus $ \lambda_1^1(P_m \times C_7) \leq 5$ and then the equality follows.
\end{proof}
The proof of the next results follow the last theorem and some remarks made earlier.
\begin{cor}For $m \geq 3$, $\lambda_1^1(P_m \times C_{14})=5.$
\end{cor}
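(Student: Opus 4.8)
The plan is to reduce the claim directly to Theorem \ref{a1} via the structural observations collected in Remark \ref{t}. Since $14$ is even, the cycle $C_{14}$ is bipartite, so by Theorem \ref{a} the product $P_m \times C_{14}$ is disconnected. By Remark \ref{t}(i)--(ii) it splits as $P_m \times C_{14} = G_1 \cup G_2$, where $G_1$ and $G_2$ are components that are isomorphic to each other (because $C_{14}$ is a cycle). Hence by Remark \ref{b}(iii) it suffices to compute $\lambda_1^1(G_1)$.

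Next I would identify $G_1$ with a more familiar object. Applying Remark \ref{t}(iii) with $n = 7$ (so that $2n = 14$), each of the two components of $P_m \times C_{14}$ is equivalent to $P_m \times C_7$. Therefore $G_1$ is equivalent to $P_m \times C_7$, and $L(1,1)$-labelings transfer across this equivalence since it is a graph isomorphism and $\lambda_1^1$ is an isomorphism invariant; in particular $\lambda_1^1(G_1) = \lambda_1^1(P_m \times C_7)$.

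Finally, invoke Theorem \ref{a1}: for $m \geq 3$ we have $\lambda_1^1(P_m \times C_7) = 5$. Combining the three steps,
\[
\lambda_1^1(P_m \times C_{14}) = \max\{\lambda_1^1(G_1), \lambda_1^1(G_2)\} = \lambda_1^1(P_m \times C_7) = 5,
\]
which is the assertion.

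There is no real obstacle here: the statement is a genuine corollary, and the only point requiring a word of care is making explicit that the ``equivalence'' in Remark \ref{t}(ii)--(iii) is an isomorphism of graphs, so that the $\lambda_1^1$-value is preserved; once that is noted, the argument is the three-line chain above.
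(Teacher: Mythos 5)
Your argument is correct and is exactly the route the paper intends: the paper justifies this corollary only by the sentence ``the proof of the next results follow the last theorem and some remarks made earlier,'' i.e.\ by Remark \ref{t}(ii)--(iii) identifying each component of $P_m \times C_{14}$ with $P_m \times C_7$ and then applying Theorem \ref{a1} together with the max-over-components rule. You have simply written out the same chain of reductions in full detail.
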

\begin{thm}Let $m \geq 3$. Then $\lambda_1^1(P_m \times C_8)=5$.
\end{thm}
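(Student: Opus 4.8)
\emph{Proof proposal.} I would first carry out the usual reduction. As $C_8$ is bipartite, Theorem \ref{a} and Remark \ref{t}(i)--(ii) give $P_m\times C_8=G_1\cup G_2$ with $G_1\cong G_2$, so by Remark \ref{b}(iii) it suffices to compute $\lambda_1^1(G_1)$; as in the earlier theorems I would identify $G_1$ with the connected component of $P_m\times P_9$ in which $u_iv_0$ is glued to $u_iv_8$, and single out its first three columns $u_0,u_1,u_2$.

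For the lower bound the plan is to assume an $L(1,1)$-labeling of $G_1$ with labels in $[4]=\{0,1,2,3,4\}$ and reach a contradiction. Reading the $v$-index modulo $8$, set $c_k=u_1v_{2k+1}$ and $P_k=\{u_0v_{2k},u_2v_{2k}\}$ for $k\in\Z_4$, so that $N(c_k)=P_k\cup P_{k+1}$. As in Lemma \ref{j}, $\{c_k\}\cup N(c_k)$ induces a $K_5$ in the square of $G_1$, hence receives all five labels; therefore $L(P_k)$ and $L(P_{k+1})$ are disjoint $2$-element sets and $l(c_k)$ is the unique label missing from $L(P_k)\cup L(P_{k+1})$. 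Since $d(c_k,c_{k+1})=2$ one also has $l(c_k)\ne l(c_{k+1})$ for every $k\in\Z_4$. The heart of the matter is then the elementary claim that \emph{no} four $2$-element subsets $A_0,A_1,A_2,A_3$ of a fixed $5$-element set can satisfy $A_k\cap A_{k+1}=\es$ for all $k\in\Z_4$ while the element $m_k$ missing from $A_k\cup A_{k+1}$ obeys $m_k\ne m_{k+1}$: writing the ground set as $A_0\sqcup A_1\sqcup\{m_0\}$, disjointness forces $A_2$ to equal either $A_0$ (whence $m_1=m_0$) or $\{m_0,x\}$ with $x\in A_0$, and in the second case disjointness of $A_3$ from both $A_2$ and $A_0$ forces $A_3=A_1$ and hence $m_1=m_2$. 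Applying this with $A_k=L(P_k)$ and $m_k=l(c_k)$ delivers the contradiction, so $\lambda_1^1(P_m\times C_8)\ge 5$.

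For the matching upper bound I would exhibit the explicit labeling of $P_m\times C_8$ given by $l(u_iv_j)=2\,(i\bmod 3)+\epsilon(j)$, where $\epsilon(j)=0$ if $j\equiv 0$ or $1\pmod 4$ and $\epsilon(j)=1$ otherwise; its values lie in $[5]=\{0,\dots,5\}$. The verification amounts to noting that the vertices at distance at most $2$ from $u_iv_j$ are exactly the $u_{i\pm1}v_{j\pm1}$, $u_iv_{j\pm2}$, $u_{i\pm2}v_j$ and $u_{i\pm2}v_{j\pm2}$: a same-column pair $u_iv_j,u_iv_{j\pm2}$ differs in its $\epsilon$-part, while each of the other three types pairs labels from the disjoint two-element blocks $\{2t,2t+1\}$ attached to columns whose indices differ by $1$ or $2$, and since consecutive residues modulo $3$ are distinct these labels differ. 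Hence $\lambda_1^1(P_m\times C_8)\le 5$, and with the previous paragraph the equality follows.

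The step I expect to be the real obstacle is the lower bound. Unlike for $P_m\times C_4$ or $P_m\times C_5$, a single propagation step of the type in Lemma \ref{s} or Lemma \ref{1} does not close up around the $8$-cycle, because one loop around $C_8$ shifts the natural five-periodic pattern by a nonzero amount; it is precisely the cyclic bookkeeping over the four pairs $P_0,\dots,P_3$ that seems needed, and making both that argument and the distance-$2$ classification above completely rigorous is where the effort will go.
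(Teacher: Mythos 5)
Your proof is correct, but your lower bound takes a genuinely different route from the paper's. The paper obtains $\lambda_1^1(P_m\times C_8)\geq 5$ in one line from Lemma \ref{1}: unrolling the $8$-cycle, a labeling with labels in $[4]$ would force $l(u_iv_0)\neq l(u_iv_8)$ since these vertices lie at distance $8$ in the unrolled component, yet they are identified in $P_m\times C_8$; that argument therefore inherits whatever is fragile in the chain Lemma \ref{q} $\to$ Lemma \ref{r} $\to$ Lemma \ref{1}. You instead argue directly around the cycle inside the three rows $u_0,u_1,u_2$: each star $\{c_k\}\cup P_k\cup P_{k+1}$ has diameter $2$ in the square and so must exhaust all five labels, and your finite check that no four $2$-subsets $A_0,\dots,A_3$ of a $5$-set can satisfy $A_k\cap A_{k+1}=\emptyset$ cyclically while all consecutive ``missing elements'' $m_k$ differ is a clean, self-contained contradiction (I verified the two cases $A_2=A_0$ and $A_2=\{m_0,x\}$; both force $m_k=m_{k+1}$ somewhere). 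This buys rigor and independence from the earlier lemmas, at the cost of being tailored to $n=8$, whereas the paper's Lemma \ref{1} is reused elsewhere (for $C_7$ and in Lemma \ref{a2}). For the upper bound the two arguments coincide in substance: the paper's instruction to ``repeat the labeling of $P_m\times C_4$'' assigns a disjoint pair of labels to each row with period $3$ in $i$, and your formula $l(u_iv_j)=2(i\bmod 3)+\epsilon(j)$ is exactly that scheme made explicit, with a complete verification via the classification of the distance-two neighbourhood.
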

\begin{proof} That $\lambda_1^1(P_m \times C_8) \geq 5$ follows from Lemma \ref{1} and $\lambda_1^1(P_m \times C_8)\leq 5$ follows from repeating the $L(1,1)-$labeling of $P_m \times C_4$.
\end{proof}

{\tiny{
\begin{center}
\pgfdeclarelayer{nodelayer}
\pgfdeclarelayer{edgelayer}
\pgfsetlayers{nodelayer,edgelayer}
\begin{tikzpicture}
	\begin{pgfonlayer}{nodelayer}
	
	\node [minimum size=0cm,]  at (-7.5,1.3) {(a) \; $P_4 \times C_{10}$};

		\node [minimum size=0cm,draw,circle] (0) at (-10,2) {$ 0$};
		\node [minimum size=0cm,draw,circle] (1) at (-9,2) {$ 4$};
		\node [minimum size=0cm,draw,circle] (2) at (-8,2) {$ 1$};
		\node [minimum size=0cm,draw,circle] (3) at (-7,2) {$ 3$};
		\node [minimum size=0cm,draw,circle] (4) at (-6,2) {$ 2$};
		\node [minimum size=0cm,draw,circle] (5) at (-5,2) {$ 0$};
		\node [minimum size=0cm,draw,circle] (6) at (-9.5,2.5) {$ 2$};
		\node [minimum size=0cm,draw,circle] (7) at (-8.5,2.5) {$ 0$};
		\node [minimum size=0cm,draw,circle] (8) at (-7.5,2.5) {$ 4$};
		\node [minimum size=0cm,draw,circle] (9) at (-6.5,2.5) {$ 1$};
		\node [minimum size=0cm,draw,circle] (10) at (-5.5,2.5) {$ 3$};
		\node [minimum size=0cm,draw,circle] (11) at (-10,3) {$ 1$};
		\node [minimum size=0cm,draw,circle] (12) at (-9,3) {$ 3$};
		\node [minimum size=0cm,draw,circle] (13) at (-8,3) {$ 2$};
		\node [minimum size=0cm,draw,circle] (14) at (-7,3) {$ 0$};
		\node [minimum size=0cm,draw,circle] (15) at (-6,3) {$ 4$};
		\node [minimum size=0cm,draw,circle] (16) at (-5,3) {$ 1$};
		\node [minimum size=0cm,draw,circle] (17) at (-9.5, 3.5) {$ 4$};
		\node [minimum size=0cm,draw,circle] (18) at (-8.5, 3.5) {$ 1$};
		\node [minimum size=0cm,draw,circle] (19) at (-7.5, 3.5) {$ 3$};
		\node [minimum size=0cm,draw,circle] (20) at (-6.5, 3.5) {$ 2$};
		\node [minimum size=0cm,draw,circle] (21) at (-5.5, 3.5) {$ 0$};
		
		\node [minimum size=0cm,]  at (0,1.3) {(b) \; $P_4 \times C_{12}$};
		\node [minimum size=0cm,draw,circle] (22) at (-3,2) {$ 0$};
		\node [minimum size=0cm,draw,circle] (23) at (-2,2) {$ 2$};
		\node [minimum size=0cm,draw,circle] (24) at (-1,2) {$ 0$};
		\node [minimum size=0cm,draw,circle] (25) at (0,2) {$ 2$};
		\node [minimum size=0cm,draw,circle] (26) at (1,2) {$ 0$};
		\node [minimum size=0cm,draw,circle] (27) at (2,2) {$ 2$};
		\node [minimum size=0cm,draw,circle] (28) at (3,2) {$ 0$};
		\node [minimum size=0cm,draw,circle] (29) at (-2.5,2.5) {$ 4$};
		\node [minimum size=0cm,draw,circle] (30) at (-1.5,2.5) {$ 1$};
		\node [minimum size=0cm,draw,circle] (31) at (-0.5,2.5) {$ 3$};
		\node [minimum size=0cm,draw,circle] (32) at (0.5,2.5) {$ 4$};
		\node [minimum size=0cm,draw,circle] (33) at (1.5,2.5) {$ 1$};
		\node [minimum size=0cm,draw,circle] (34) at (2.5,2.5) {$ 3$};
		\node [minimum size=0cm,draw,circle] (35) at (-3,3) {$ 1$};
		\node [minimum size=0cm,draw,circle] (36) at (-2,3) {$ 3$};
		\node [minimum size=0cm,draw,circle] (37) at (-1,3) {$ 4$};
		\node [minimum size=0cm,draw,circle] (38) at (0,3) {$ 1$};
		\node [minimum size=0cm,draw,circle] (39) at (1,3) {$ 3$};
		\node [minimum size=0cm,draw,circle] (40) at (2,3) {$ 4$};
		\node [minimum size=0cm,draw,circle] (41) at (3,3) {$ 1$};
		\node [minimum size=0cm,draw,circle] (42) at (-2.5,3.5) {$ 2$};
		\node [minimum size=0cm,draw,circle] (43) at (-1.5,3.5) {$ 0$};
		\node [minimum size=0cm,draw,circle] (44) at (-0.5,3.5) {$ 2$};
		\node [minimum size=0cm,draw,circle] (45) at (0.5,3.5) {$ 0$};
		\node [minimum size=0cm,draw,circle] (46) at (1.5,3.5) {$ 2$};
		\node [minimum size=0cm,draw,circle] (47) at (2.5,3.5) {$ 0$};
	\end{pgfonlayer}
	\begin{pgfonlayer} {edgelayer}
		\draw [thin=1.00] (0) to (6);
		\draw [thin=1.00] (1) to (6);
		\draw [thin=1.00] (1) to (7);
		\draw [thin=1.00] (2) to (7);
		\draw [thin=1.00] (2) to (8);
		\draw [thin=1.00] (3) to (8);
		\draw [thin=1.00] (3) to (9);
		\draw [thin=1.00] (4) to (9);
		\draw [thin=1.00] (4) to (10);
		\draw [thin=1.00] (5) to (10);
		\draw [thin=1.00] (6) to (11);
		\draw [thin=1.00] (6) to (12);
		\draw [thin=1.00] (7) to (12);
		\draw [thin=1.00] (7) to (13);
		\draw [thin=1.00] (8) to (13);
		\draw [thin=1.00] (8) to (14);
		\draw [thin=1.00] (9) to (14);
		\draw [thin=1.00] (9) to (15);
		\draw [thin=1.00] (10) to (15);
		\draw [thin=1.00] (10) to (16);
		\draw [thin=1.00] (11) to (17);
		\draw [thin=1.00] (12) to (17);
		\draw [thin=1.00] (12) to (18);
		\draw [thin=1.00] (13) to (18);
		\draw [thin=1.00] (13) to (19);
		\draw [thin=1.00] (14) to (19);
		\draw [thin=1.00] (14) to (20);
		\draw [thin=1.00] (15) to (20);
		\draw [thin=1.00] (15) to (21);
		\draw [thin=1.00] (16) to (21);
		
    \draw [thin=1.00] (22) to (29);
		\draw [thin=1.00] (23) to (29);
		\draw [thin=1.00] (23) to (30);
		\draw [thin=1.00] (24) to (30);
		\draw [thin=1.00] (24) to (31);
		\draw [thin=1.00] (25) to (31);
		\draw [thin=1.00] (25) to (32);
		\draw [thin=1.00] (26) to (32);
		\draw [thin=1.00] (26) to (33);
		
		\draw [thin=1.00] (27) to (33);
		\draw [thin=1.00] (27) to (34);
		\draw [thin=1.00] (28) to (34);
		
		\draw [thin=1.00] (29) to (35);
		\draw [thin=1.00] (29) to (36);
		\draw [thin=1.00] (30) to (36);
		\draw [thin=1.00] (30) to (37);
		\draw [thin=1.00] (31) to (37);
		\draw [thin=1.00] (31) to (38);
		\draw [thin=1.00] (32) to (38);
		\draw [thin=1.00] (32) to (39);
		\draw [thin=1.00] (33) to (39);
		\draw [thin=1.00] (33) to (40);
		\draw [thin=1.00] (34) to (40);
		\draw [thin=1.00] (34) to (41);
	
		\draw [thin=1.00] (35) to (42);
		\draw [thin=1.00] (36) to (42);		
		\draw [thin=1.00] (36) to (43);
		\draw [thin=1.00] (37) to (43);
		\draw [thin=1.00] (37) to (44);
		\draw [thin=1.00] (38) to (44);
		\draw [thin=1.00] (38) to (45);
		\draw [thin=1.00] (39) to (45);
		\draw [thin=1.00] (39) to (46);
		\draw [thin=1.00] (40) to (46);
		\draw [thin=1.00] (40) to (47);
		\draw [thin=1.00] (41) to (47);

	\end{pgfonlayer}
\end{tikzpicture}
\end{center}

\begin{center}
\pgfdeclarelayer{nodelayer}
\pgfdeclarelayer{edgelayer}
\pgfsetlayers{nodelayer,edgelayer}
\begin{tikzpicture}
	\begin{pgfonlayer} {nodelayer}
	
		\node [minimum size=0cm,]  at (0,1.3) {(c) $P_4 \times C_{16}$};
		\node [minimum size=0cm,draw,circle] (a) at (-4,2) {$ 0$};
		\node [minimum size=0cm,draw,circle] (22) at (-3,2) {$ 4$};
		\node [minimum size=0cm,draw,circle] (23) at (-2,2) {$ 1$};
		\node [minimum size=0cm,draw,circle] (24) at (-1,2) {$ 4$};
		\node [minimum size=0cm,draw,circle] (25) at (0,2) {$ 2$};
		\node [minimum size=0cm,draw,circle] (26) at (1,2) {$ 4$};
		\node [minimum size=0cm,draw,circle] (27) at (2,2) {$ 3$};
		\node [minimum size=0cm,draw,circle] (28) at (3,2) {$ 4$};
		\node [minimum size=0cm,draw,circle] (b) at (4,2) {$ 0$};
		
		\node [minimum size=0cm,draw,circle] (c) at (-3.5,2.5) {$ 2$};
		\node [minimum size=0cm,draw,circle] (29) at (-2.5,2.5) {$ 0$};
		\node [minimum size=0cm,draw,circle] (30) at (-1.5,2.5) {$ 3$};
		\node [minimum size=0cm,draw,circle] (31) at (-0.5,2.5) {$ 1$};
		\node [minimum size=0cm,draw,circle] (32) at (0.5,2.5) {$ 0$};
		\node [minimum size=0cm,draw,circle] (33) at (1.5,2.5) {$ 2$};
		\node [minimum size=0cm,draw,circle] (34) at (2.5,2.5) {$ 1$};
		\node [minimum size=0cm,draw,circle] (d) at (3.5,2.5) {$ 3$};

		\node [minimum size=0cm,draw,circle] (e) at (-4,3) {$ 1$};
		\node [minimum size=0cm,draw,circle] (35) at (-3,3) {$ 3$};
		\node [minimum size=0cm,draw,circle] (36) at (-2,3) {$ 2$};
		\node [minimum size=0cm,draw,circle] (37) at (-1,3) {$ 0$};
		\node [minimum size=0cm,draw,circle] (38) at (0,3) {$ 3$};
		\node [minimum size=0cm,draw,circle] (39) at (1,3) {$ 1$};
		\node [minimum size=0cm,draw,circle] (40) at (2,3) {$ 0$};
		\node [minimum size=0cm,draw,circle] (41) at (3,3) {$ 2$};
		\node [minimum size=0cm,draw,circle] (f) at (4,3) {$ 1$};
		
		\node [minimum size=0cm,draw,circle] (g) at (-3.5,3.5) {$ 4$};
		\node [minimum size=0cm,draw,circle] (42) at (-2.5,3.5) {$ 1$};
		\node [minimum size=0cm,draw,circle] (43) at (-1.5,3.5) {$ 4$};
		\node [minimum size=0cm,draw,circle] (44) at (-0.5,3.5) {$ 2$};
		\node [minimum size=0cm,draw,circle] (45) at (0.5,3.5) {$ 2$};
		\node [minimum size=0cm,draw,circle] (46) at (1.5,3.5) {$ 3$};
		\node [minimum size=0cm,draw,circle] (47) at (2.5,3.5) {$ 4$};
		\node [minimum size=0cm,draw,circle] (h) at (3.5,3.5) {$ 0$};
	\end{pgfonlayer}
	\begin{pgfonlayer} {edgelayer}

   \draw [thin=1.00] (a) to (c);
   \draw [thin=1.00] (22) to (c);
	  \draw [thin=1.00] (22) to (29);
		\draw [thin=1.00] (23) to (29);
		\draw [thin=1.00] (23) to (30);
		\draw [thin=1.00] (24) to (30);
		\draw [thin=1.00] (24) to (31);
		\draw [thin=1.00] (25) to (31);
		\draw [thin=1.00] (25) to (32);
		\draw [thin=1.00] (26) to (32);
		\draw [thin=1.00] (26) to (33);
		\draw [thin=1.00] (27) to (33);
		\draw [thin=1.00] (27) to (34);
		\draw [thin=1.00] (28) to (34);
		\draw [thin=1.00] (28) to (d);
   \draw [thin=1.00] (b) to (d);
   \draw [thin=1.00] (c) to (e);
   \draw [thin=1.00] (c) to (35);
		\draw [thin=1.00] (29) to (35);
		\draw [thin=1.00] (29) to (36);
		\draw [thin=1.00] (30) to (36);
		\draw [thin=1.00] (30) to (37);
		\draw [thin=1.00] (31) to (37);
		\draw [thin=1.00] (31) to (38);
		\draw [thin=1.00] (32) to (38);
		\draw [thin=1.00] (32) to (39);
		\draw [thin=1.00] (33) to (39);
		\draw [thin=1.00] (33) to (40);
	\draw [thin=1.00] (d) to (41);
   \draw [thin=1.00] (d) to (f);
   \draw [thin=1.00] (e) to (g);
   \draw [thin=1.00] (35) to (g);
		\draw [thin=1.00] (34) to (40);
		\draw [thin=1.00] (34) to (41);
	  \draw [thin=1.00] (35) to (42);
		\draw [thin=1.00] (36) to (42);		
		\draw [thin=1.00] (36) to (43);
		\draw [thin=1.00] (37) to (43);
		\draw [thin=1.00] (37) to (44);
		\draw [thin=1.00] (38) to (44);
		\draw [thin=1.00] (38) to (45);
		\draw [thin=1.00] (39) to (45);
		\draw [thin=1.00] (39) to (46);
		\draw [thin=1.00] (40) to (46);
		\draw [thin=1.00] (40) to (47);
		\draw [thin=1.00] (41) to (47);
	\draw [thin=1.00] (41) to (h);
   \draw [thin=1.00] (f) to (h);

		\end{pgfonlayer}
\end{tikzpicture}
\end{center}

\begin{center}
\pgfdeclarelayer{nodelayer}
\pgfdeclarelayer{edgelayer}
\pgfsetlayers{nodelayer,edgelayer}
\begin{tikzpicture}
	
\begin{pgfonlayer}{nodelayer}
	  \node [minimum size=0cm,]  at (-0.5,0.3) { \; Fig. 1 \;  $4-L(1,1)-$Labeling of $P_4 \times C_n, \; n=10,12,16,18$};
		\node [minimum size=0cm,]  at (-0.5,1.3) {(d) \; $P_4 \times C_{18}$};
		\node [minimum size=0cm,draw,circle] (i) at (-5,2) {$ 0$};
		\node [minimum size=0cm,draw,circle] (a) at (-4,2) {$ 4$};
		\node [minimum size=0cm,draw,circle] (22) at (-3,2) {$ 1$};
		\node [minimum size=0cm,draw,circle] (23) at (-2,2) {$ 0$};
		\node [minimum size=0cm,draw,circle] (24) at (-1,2) {$ 1$};
		\node [minimum size=0cm,draw,circle] (25) at (0,2) {$ 0$};
		\node [minimum size=0cm,draw,circle] (26) at (1,2) {$ 1$};
		\node [minimum size=0cm,draw,circle] (27) at (2,2) {$ 0$};
		\node [minimum size=0cm,draw,circle] (28) at (3,2) {$ 4$};
		\node [minimum size=0cm,draw,circle] (b) at (4,2) {$ 0$};

		\node [minimum size=0cm,draw,circle] (j) at (-4.5,2.5) {$ 2$};
		\node [minimum size=0cm,draw,circle] (c) at (-3.5,2.5) {$ 0$};
		\node [minimum size=0cm,draw,circle] (29) at (-2.5,2.5) {$ 3$};
		\node [minimum size=0cm,draw,circle] (30) at (-1.5,2.5) {$ 2$};
		\node [minimum size=0cm,draw,circle] (31) at (-0.5,2.5) {$ 4$};
		\node [minimum size=0cm,draw,circle] (32) at (0.5,2.5) {$ 3$};
		\node [minimum size=0cm,draw,circle] (33) at (1.5,2.5) {$ 2$};
		\node [minimum size=0cm,draw,circle] (34) at (2.5,2.5) {$ 1$};
		\node [minimum size=0cm,draw,circle] (d) at (3.5,2.5) {$ 3$};
		
		\node [minimum size=0cm,draw,circle] (k) at (-5,3) {$ 1$};
		\node [minimum size=0cm,draw,circle] (e) at (-4,3) {$ 3$};
		\node [minimum size=0cm,draw,circle] (35) at (-3,3) {$ 2$};
		\node [minimum size=0cm,draw,circle] (36) at (-2,3) {$ 4$};
		\node [minimum size=0cm,draw,circle] (37) at (-1,3) {$ 3$};
		\node [minimum size=0cm,draw,circle] (38) at (0,3) {$ 2$};
		\node [minimum size=0cm,draw,circle] (39) at (1,3) {$ 4$};
		\node [minimum size=0cm,draw,circle] (40) at (2,3) {$ 3$};
		\node [minimum size=0cm,draw,circle] (41) at (3,3) {$ 2$};
		\node [minimum size=0cm,draw,circle] (f) at (4,3) {$ 1$};
		
		\node [minimum size=0cm,draw,circle] (l) at (-4.5,3.5) {$ 4$};
		\node [minimum size=0cm,draw,circle] (g) at (-3.5,3.5) {$ 0$};
		\node [minimum size=0cm,draw,circle] (42) at (-2.5,3.5) {$ 1$};
		\node [minimum size=0cm,draw,circle] (43) at (-1.5,3.5) {$ 0$};
		\node [minimum size=0cm,draw,circle] (44) at (-0.5,3.5) {$ 1$};
		\node [minimum size=0cm,draw,circle] (45) at (0.5,3.5) {$ 0$};
		\node [minimum size=0cm,draw,circle] (46) at (1.5,3.5) {$ 1$};
		\node [minimum size=0cm,draw,circle] (47) at (2.5,3.5) {$ 4$};
		\node [minimum size=0cm,draw,circle] (h) at (3.5,3.5) {$ 0$};
	\end{pgfonlayer}
	\begin{pgfonlayer}{edgelayer}

   \draw [thin=1.00] (i) to (j);
   \draw [thin=1.00] (a) to (j);
   \draw [thin=1.00] (a) to (c);
   \draw [thin=1.00] (22) to (c);
	  \draw [thin=1.00] (22) to (29);
		\draw [thin=1.00] (23) to (29);
		\draw [thin=1.00] (23) to (30);
		\draw [thin=1.00] (24) to (30);
		\draw [thin=1.00] (24) to (31);
		\draw [thin=1.00] (25) to (31);
		\draw [thin=1.00] (25) to (32);
		\draw [thin=1.00] (26) to (32);
		\draw [thin=1.00] (26) to (33);
		\draw [thin=1.00] (27) to (33);
		\draw [thin=1.00] (27) to (34);
		\draw [thin=1.00] (28) to (34);
		\draw [thin=1.00] (28) to (d);
   \draw [thin=1.00] (b) to (d);
   \draw [thin=1.00] (j) to (k);
   \draw [thin=1.00] (j) to (e);
   \draw [thin=1.00] (c) to (e);
   \draw [thin=1.00] (c) to (35);
		\draw [thin=1.00] (29) to (35);
		\draw [thin=1.00] (29) to (36);
		\draw [thin=1.00] (30) to (36);
		\draw [thin=1.00] (30) to (37);
		\draw [thin=1.00] (31) to (37);
		\draw [thin=1.00] (31) to (38);
		\draw [thin=1.00] (32) to (38);
		\draw [thin=1.00] (32) to (39);
		\draw [thin=1.00] (33) to (39);
		\draw [thin=1.00] (33) to (40);
	\draw [thin=1.00] (d) to (41);
   \draw [thin=1.00] (d) to (f);
   \draw [thin=1.00] (k) to (l);
   \draw [thin=1.00] (e) to (l);
   \draw [thin=1.00] (e) to (g);
   \draw [thin=1.00] (35) to (g);
		\draw [thin=1.00] (34) to (40);
		\draw [thin=1.00] (34) to (41);
	  \draw [thin=1.00] (35) to (42);
		\draw [thin=1.00] (36) to (42);		
		\draw [thin=1.00] (36) to (43);
		\draw [thin=1.00] (37) to (43);
		\draw [thin=1.00] (37) to (44);
		\draw [thin=1.00] (38) to (44);
		\draw [thin=1.00] (38) to (45);
		\draw [thin=1.00] (39) to (45);
		\draw [thin=1.00] (39) to (46);
		\draw [thin=1.00] (40) to (46);
		\draw [thin=1.00] (40) to (47);
		\draw [thin=1.00] (41) to (47);
	\draw [thin=1.00] (41) to (h);
   \draw [thin=1.00] (f) to (h);

		\end{pgfonlayer} 
\end{tikzpicture}
\end{center}
}}

\begin{thm}Given that $n \geq 9$, $n \neq 14$, then $\lambda_1^1(P_4 \times C_n)=4.$
\end{thm}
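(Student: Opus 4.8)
The plan is to prove the two inequalities separately, the lower bound being the easy one. Since $P_4=u_0u_1u_2u_3$ has the internal vertices $u_1,u_2$ of degree two and every vertex of $C_n$ has degree two, Lemma~\ref{j} produces a $K_{1,4}\subseteq P_4\times C_n$ (for instance $u_1v_j$ is adjacent to the four vertices $u_0v_{j-1},u_0v_{j+1},u_2v_{j-1},u_2v_{j+1}$), so Lemma~\ref{g} gives $\lambda_1^1(P_4\times C_n)\ge\lambda_1^1(K_{1,4})=4$. Everything else is devoted to exhibiting, for each $n\ge 9$ with $n\neq 14$, an $L(1,1)$-labeling of $P_4\times C_n$ using only the labels $0,1,2,3,4$.

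First I would reduce to even $n$ and to a single component. If $n$ is odd, Remark~\ref{t}(iii) identifies $P_4\times C_n$ with a component of $P_4\times C_{2n}$, and since $n\ge 9$ gives $2n\ge 18$ (so $2n$ is even and $2n\neq 14$), a $4$-labeling of $P_4\times C_{2n}$ restricts to the desired one. If $n$ is even, then $P_4\times C_n=G_1\cup G_2$ with $G_1\cong G_2$ by Remark~\ref{t}(i),(ii), so by Remark~\ref{b}(iii) it suffices to $4$-label $G_1$, which by Remark~\ref{t}(iv) is the component of $P_4\times P_{2n+1}$ in which the column $V_0$ is glued to the column $V_n$. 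Thus the whole problem becomes: for every even $n\ge 10$ with $n\neq 14$, put a valid $\{0,\dots,4\}$-labeling on this diagonal strip coming from the $P_4$ factor that is periodic with period $n$ in the cycle direction; note that ``valid'' is here a local condition, because every vertex at distance $1$ or $2$ from $u_iv_j$ lies in the five columns $V_{j-2},\dots,V_{j+2}$. The subcase $n\equiv 0\pmod 5$ is already settled by the Corollary that $\lambda_1^1(P_m\times C_{5n'})=4$ (ultimately because the labeling $l(u_iv_j)=\lfloor(i+3j)/2\rfloor\bmod 5$ of Proposition~\ref{l} is $10$-periodic in $j$, so it descends whenever $10\mid n$).

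For the remaining even values I would argue by explicit construction together with a gluing step. Figure~1 records valid $\{0,\dots,4\}$-labelings of $P_4\times C_n$ for $n=10,12,16,18$, each built from a few ``cap'' columns plus a repeating strip; the lemma to isolate and prove is that a valid $\{0,\dots,4\}$-labeling of $P_4\times C_\ell$ can be cut along two consecutive columns and a fixed six-column strip spliced in, yielding a valid $\{0,\dots,4\}$-labeling of $P_4\times C_{\ell+6}$ (this is exactly the $10\to 16$ and $12\to 18$ passage visible in Figure~1). Iterating ``$+6$'' from the base cases $n\in\{10,12,16,18\}$ and from the multiples of $5$ then reaches every even $n\ge 10$ with $n\neq 14$ --- and, consistently with the earlier $\lambda_1^1(P_m\times C_{14})=5$, the value $14$ is never produced --- which, via the odd-to-even reduction, settles all $n\ge 9$, $n\neq 14$. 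The routine parts are the finite verification that the four patterns in Figure~1 are genuine $L(1,1)$-labelings (one inspects the five-column windows and the wrap-around seam) and the arithmetic that the $+6$ iteration hits exactly the claimed set of $n$. I expect the main obstacle to be the gluing lemma itself: one must exhibit a single six-column insert whose two boundary column-pairs carry label sets simultaneously compatible, at both new seams, with the columns flanking an arbitrary cut in each base labeling, so that splicing it in creates no distance-$1$ or distance-$2$ repetition and restores periodicity. Because the insert has bounded width and the constraint is local, it can be pinned down by reading it off the strips in Figure~1; once it is in hand the induction closes, and together with the lower bound this yields $\lambda_1^1(P_4\times C_n)=4$ for all $n\ge 9$ with $n\neq 14$.
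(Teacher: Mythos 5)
Your lower bound, the reduction to a single component of $P_4\times C_{2n}$ (for $n$ odd) or of $P_4\times C_n$ (for $n$ even), and the appeal to the multiples-of-$5$ corollary all match the paper, and your arithmetic is in fact better designed than the paper's: iterating ``$+6$'' from the bases $10$, $12$ and $20$ really does reach every even $n\ge 10$ except $14$, whereas the paper's own scheme (appending the $C_{10}$ block of Figure~1(a) to the $C_{12},C_{16},C_{18}$ blocks) leaves the residue class $n\equiv 4\pmod{10}$ --- e.g.\ $n=24$, hence also $n=17,27,\dots$ on the odd side --- uncovered. The problem is that everything hinges on the ``$+6$ gluing lemma,'' which you identify as the main obstacle and then do not prove, and the evidence you cite for it is not there. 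Reading the column sequences off Figure~1, the $C_{16}$ labeling is \emph{not} the $C_{10}$ labeling with a six-column strip spliced in: the two patterns agree on their first five columns, $(0,1),(2,4),(4,3),(0,1),(1,2)$, and then diverge (column $5$ is $(4,3)$ in the $10$-pattern but $(3,4)$ in the $16$-pattern, and since the two vertices of a column have different degrees in $P_4\times C_n$ this is a genuinely different labeling, not a relabeling); the same happens for $12\to 18$. Moreover the lemma cannot hold in the universal form you state: a single fixed six-column insert compatible with an \emph{arbitrary} cut would in particular be compatible with copies of itself, giving a valid $4$-labeling of $P_4\times C_6$, contradicting the paper's $\lambda_1^1(P_m\times C_6)=5$. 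So the insert must be tailored to the specific columns flanking a specific cut, and to iterate ``$+6$'' you then need an inductive invariant (every labeling produced by the construction still contains a window where the insert fits) that you have not formulated, let alone verified. This is the substantive content of the upper bound, and it is missing.

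Secondarily, the ``routine finite verification'' of the four base patterns is not routine and, as printed, fails: in Figure~1(c) the two vertices labelled $2$ at positions $(-0.5,3.5)$ and $(0.5,3.5)$ share the neighbour at $(0,3)$, and in Figure~1(d) the two vertices labelled $0$ at $(-3.5,2.5)$ and $(-3.5,3.5)$ share two neighbours; both are distance-two conflicts, so even the base cases would need to be repaired before any splicing argument (yours or the paper's) can start. For comparison, the paper's proof is the same ``base cases plus concatenation'' idea with a $+10$ step instead of your $+6$; it too asserts rather than verifies the seam compatibility, and in addition has the mod-$10$ coverage gap noted above, so neither argument is complete as it stands.
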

\begin{proof}
From  (b),(c),(d) if Fig. 1, we notice that $\lambda_1^1(P_4 \times C_{n'})=4$, for all $m'\in \left\{12,16,18\right\}$. Now, by combining each of (b),(c),(d) with (a), we see that $\lambda_1^1(P_m \times C_{n'+10})=4$, for each $n'\in \left\{12,16,18\right\}$. Therefore, $\lambda_1^1(P_4 \times C_{km'+p})=4$ $\forall k\geq 0$ and $p \in \left\{0,10\right\}.$ Thus by an earlier remark, $\lambda_1^1(P_4 \times C_n)=4$ for all $n \geq 9$, $n \neq 14$.
\end{proof}
\begin{cor} Given that $n \geq 9$, $n \neq 14$, and that $m \in \left\{3,4\right\}$ then $\lambda_1^1(P_m \times C_n)=4.$
\end{cor}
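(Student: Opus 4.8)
The plan is to derive this corollary from the theorem immediately preceding it: for $m=4$ the assertion is literally that theorem, so everything reduces to the case $m=3$, which I would settle by showing $4\le\lambda_1^1(P_3\times C_n)\le 4$.

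For the upper bound, note that $P_3$ is the subgraph of $P_4$ induced by $\{u_0,u_1,u_2\}$, hence $P_3\times C_n$ is the induced subgraph of $P_4\times C_n$ on the vertex set $\{u_0,u_1,u_2\}\times V(C_n)$. I would then invoke the monotonicity of $\lambda_1^1$ under subgraphs — the same principle used implicitly throughout Sections~3 and~4 — namely: if $H\subseteq G$ and $l$ is an $L(1,1)$-labeling of $G$, then $l|_{V(H)}$ is an $L(1,1)$-labeling of $H$, because an edge of $H$ is an edge of $G$, and two vertices at distance two in $H$ are at distance at most two in $G$, so in either case their labels differ. Hence $\lambda_1^1(P_3\times C_n)\le\lambda_1^1(P_4\times C_n)$, and the latter equals $4$ by the preceding theorem precisely under the hypotheses $n\ge 9$, $n\ne 14$. (If $n$ is even and $P_3\times C_n$ is disconnected, one applies this componentwise using Remark~\ref{b}(iii).)

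For the lower bound I would exhibit a $K_{1,4}$ inside $P_3\times C_n$: the middle vertex $u_1$ of $P_3$ has degree two and every vertex $v_j$ of $C_n$ has degree two, so, exactly as in Lemma~\ref{j} and Proposition~\ref{l}, the vertex $u_1v_j$ is adjacent to the four pairwise non-adjacent vertices $u_0v_{j-1},u_0v_{j+1},u_2v_{j-1},u_2v_{j+1}$, which are distinct since $n\ge 3$. By Lemma~\ref{g}, $\lambda_1^1(K_{1,4})=4$, so subgraph monotonicity again gives $\lambda_1^1(P_3\times C_n)\ge 4$. Combining the two inequalities yields $\lambda_1^1(P_3\times C_n)=4$, and together with the $m=4$ case this proves the corollary. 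There is no serious obstacle here; the only care needed is to state and use the subgraph-monotonicity of $\lambda_1^1$ cleanly, and to note that the restriction $n\ge 9$, $n\ne 14$ is inherited verbatim from the theorem being cited.
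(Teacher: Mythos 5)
Your proposal is correct and is essentially the argument the paper intends (the corollary is stated without proof there): the $m=4$ case is the preceding theorem, and the $m=3$ case follows by the subgraph monotonicity of $\lambda_1^1$ applied to $P_3\times C_n\subseteq P_4\times C_n$ together with the $K_{1,4}$ lower bound already used in Proposition \ref{l}. Both of these ingredients are used elsewhere in the paper (e.g.\ the later remark that $P_m\times C_n\subset P_{10m}\times C_n$), so no further comment is needed.
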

\begin{thm}For $m \geq 3$, $\lambda_1^1(P_m \times C_{14})=4$.
\end{thm}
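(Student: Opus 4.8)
The plan is to follow the two–step template of the preceding theorems: first reduce $P_m\times C_{14}$ to a connected component of a product of paths carrying a wrap‑around identification, then bound $\lambda_1^1$ from below by an embedded $K_{1,4}$ and from above by an explicit periodic $4$–labeling. By Remark~\ref{t}(ii)--(iv) together with Remark~\ref{b}(iii), $P_m\times C_{14}$ is the disjoint union of two isomorphic components, each isomorphic to the connected component $G'$ of $P_m\times P_{15}$ obtained by identifying $u_iv_0$ with $u_iv_{14}$ for all $i$ in one parity class; so it suffices to prove $\lambda_1^1(G')=4$. The lower bound is immediate: for $m\ge 3$ the graph $G'$ contains a vertex $u_iv_j$ with $d_{u_i}=d_{v_j}=2$, hence a copy of $K_{1,4}$ by Lemma~\ref{j}, and $\lambda_1^1(K_{1,4})=4$ by Lemma~\ref{g}; thus $\lambda_1^1(P_m\times C_{14})\ge 4$.

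For the upper bound I would construct a concrete $4$–$L(1,1)$–labeling $l(u_iv_j)\in[4]$ of $G'$, with $j$ read modulo $14$. The reason this theorem is stated separately is that $14$ is not a nonnegative combination of the block widths $10,12,16,18$ of Fig.~1, so the labeling cannot be obtained by concatenating those blocks and a genuinely new period‑$14$ pattern in the cycle direction is required. I would proceed exactly as in Theorem~\ref{viii} and Theorem~\ref{a1}: work column by column along the cycle, enforcing the $L(1,1)$ constraints on each induced star $S_4$ and on the distance‑$2$ pairs isolated in Lemmas~\ref{r} and~\ref{s}, and exploit the fact that (for $n\ge4$) the label multiset of a column class $V_i$ determines that of $V_{i+3}$, so that once a consistent assignment on three consecutive diagonals is fixed, periodicity in the $P_m$–direction propagates it to all $m\ge 3$. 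What then remains is to pin down, in the $14$–periodic direction, a valid template on the diagonals through $v_0,\dots,v_{13}$ whose restriction to $v_0,v_1,v_2$ agrees with its own shift by $14$; granting that, $\lambda_1^1(G')\le 4$ and equality follows.

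The hard part is precisely this period‑$14$ template. Because $14=2\cdot 7$, the wrap‑around forces the pattern to close up compatibly over a $7$–fold cyclic structure, which is exactly the regime in which the distance‑$8$ collision of Lemma~\ref{1} — the obstruction that pushes $\lambda_1^1(P_m\times C_7)$ up to $5$ in Theorem~\ref{a1}, and hence $\lambda_1^1(P_m\times C_{14})$ to $5$ via the component isomorphism of Remark~\ref{t}(iii) — is liable to intervene. So the crux of the argument is to exhibit, and verify on an explicit array in the style of Fig.~1, a length‑$14$ cyclic sequence of diagonal labels in which no two equally labeled vertices lie at distance $\le 2$ and no distance‑$8$ pair along a $P_3$–fibre is forced to repeat a label. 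I expect this construction, and its reconciliation with the distance‑$8$ lower‑bound mechanism underlying the $C_7$ (equivalently $C_{14}$) analysis, to be where essentially all the difficulty of the proof resides.
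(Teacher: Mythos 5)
Your approach cannot be completed, and the reason is one you yourself half-identified in your final paragraph: the statement as printed is inconsistent with the rest of the paper, and the value $4$ should read $5$. By Remark \ref{t}(ii) and (iii), $P_m \times C_{14}$ is the disjoint union of two isomorphic components, each isomorphic to $P_m \times C_7$; hence $\lambda_1^1(P_m \times C_{14}) = \lambda_1^1(P_m \times C_7) = 5$ by Theorem \ref{a1}. This is exactly what the corollary immediately following Theorem \ref{a1} already records, what the exclusion ``$n \neq 14$'' in the preceding theorem anticipates, and what the summary table in the Conclusion lists ($14$ appears among the $n$ with $\lambda_1^1 = 5$). The paper's own one-line proof of the present theorem cites precisely Remark \ref{t}(iii) and Theorem \ref{a1}, which yield $5$, not $4$.

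Consequently the ``hard part'' you defer --- an explicit period-$14$ template with labels in $[4]$ closing up around the cycle --- does not exist: any $4$-$L(1,1)$-labeling of $P_m \times C_{14}$ would restrict to a $4$-labeling of a component isomorphic to $P_m \times C_7$, contradicting Theorem \ref{a1}, whose lower-bound argument runs through the distance-$8$ obstruction of Lemma \ref{1} that you correctly flagged. Your lower bound $\lambda_1^1 \geq 4$ via Lemmas \ref{j} and \ref{g} is correct but is not the binding constraint. The fix is not a cleverer construction but a correction of the statement to $\lambda_1^1(P_m \times C_{14}) = 5$, after which the derivation from Remark \ref{t}(iii) and Theorem \ref{a1} is already complete; when your planned construction keeps colliding with Lemma \ref{1}, that is the signal to revisit the claim rather than the construction.
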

\begin{proof} It follows directly from Remark \ref{t}(iii) and Theorem \ref{a1}.
\end{proof}
 Next, we derive the general lower bound for the $L(1,1)-$ labelling of $P_m \times C_n,$ where $m \geq 5,$ $n \not\equiv 0 \mod 5$. That $\lambda_1^1(P_m \times C_n)=4$, where $m,n$ are both multiples of $5$, has already been established. We need the next lemma to prove the theorem that follows.
\begin{lem} \label{a2}If $\lambda_1^1(P_m \times C_n)=4$ for $n \not\equiv 0 \mod 5$, $n \geq 9$. Then, for all $V_j \subset V(P_m \times C_n),$ $0 \leq j \leq n-2$, there exist $v_a,v_b \in V_j,$ such that $l(v_a)=l(v_b)$ and $d(v_a,v_b)=6$.
\end{lem}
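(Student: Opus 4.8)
The plan is to fix an arbitrary column $V=V_j$ and show that $V$ already contains a monochromatic pair at graph distance exactly $6$. If $n$ is even, $P_m\times C_n$ splits into two isomorphic components (Remark~\ref{t}(i)--(ii)) and I argue in one of them; if $n$ is odd the graph is connected. The first step is to pin down distances inside $V$. Any walk joining $u_jv_s$ and $u_jv_t$ has even length (the $P_m$-coordinate must return to $j$), and along a walk of length $2\ell$ the $C_n$-coordinate changes by an integer of the same parity as $2\ell$, hence by an even integer congruent to $t-s\pmod n$; conversely that many steps suffice (bounce in the path coordinate while marching monotonically around the cycle). Thus $d(u_jv_s,u_jv_t)$ is the least non-negative even integer congruent to $\pm(t-s)\bmod n$, so after re-indexing $V=\{y_0,\dots,y_{N-1}\}$ (with $N=n$ when $n$ is odd and $N=n/2$ when $n$ is even, via $y_a=u_jv_{2a+c}$ for a suitable offset $c$) one gets $d(y_a,y_b)=2\cdot\mathrm{circdist}_{\Z_N}(a,b)$. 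In particular all distances in $V$ are even, and a monochromatic pair of $V$ sits at distance $4$, $6$, $8$, $\dots$ according as the two $y$-indices are at circular distance $2$, $3$, $4$, $\dots$ in $\Z_N$.

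The labeling uses only the five labels $0,\dots,4$ while $|V|=N\ge 6$ — here I use $n\ge 9$ together with $n\not\equiv 0\bmod 5$, the only value of $n$ that would give $N=5$ being the excluded $n=10$ — so by pigeonhole some label is repeated in $V$, and since $\lambda_1^1=4$ any such pair is at distance $\ge 4$, i.e.\ at distance in $\{4,6,8,\dots\}$. It therefore suffices to rule out, for a repeated pair, every even distance other than $6$.

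Distance $4$ is excluded by the star lemmas: for an interior column this is Lemma~\ref{r}, its vertices being centres of induced copies of $S_4$ (the four neighbours $u_{j\pm1}v_{\bullet\pm1}$), and two such centres at distance $4$ must carry distinct labels; for an extreme column it follows from Lemma~\ref{s} applied with the triple of columns at that end. Distance $8$ is excluded by Lemma~\ref{1}: restricting the given labeling to three consecutive columns containing $V_j$ yields a subgraph of $P_3\times C_n$, which by unfolding the $C_n$ factor (cf.\ Remark~\ref{t}(iii)--(iv)) is a subgraph of a component of $P_3\times P_{2n+1}$, and since a path of length $8$ cannot wrap around $C_n$ (as $n\ge 9$) the distance $8$ is genuinely realized there; when $V_j$ is interior it is the middle column of the strip and Lemma~\ref{1} applies directly, while when $V_j$ is extreme one first uses Lemmas~\ref{q}, \ref{r} and \ref{s} to force the nearest recurrence of the label into the adjacent interior column, which moves the conflict into the middle column where Lemma~\ref{1} again closes it. The remaining, longer even distances are handled by a counting argument on $\Z_N$: once circular $y$-distances $1,2,4$ are forbidden, if every repeated colour occurred only at circular $y$-distances $\ge 5$ then each colour would occupy at most $\lfloor N/5\rfloor$ positions, so summing over the five colours gives $N\le 5\lfloor N/5\rfloor<N$ since $5\nmid N$ — a contradiction; hence some repeated colour occurs at circular $y$-distance $\le 4$, and as $1,2,4$ are impossible it occurs at circular $y$-distance exactly $3$, i.e.\ at distance $6$. (For $N\in\{6,7\}$ the circular diameter is $3$, so distances $\ge 8$ never arise and the conclusion is immediate; for $N\in\{8,9\}$ removing $1,2,4$ already leaves only $3$.)

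The point I expect to cost the most work is the boundary columns. Lemmas~\ref{r} and~\ref{1} are stated for the centre of a star and for the middle column of $P_3$, so to run the exclusions of distances $4$ and $8$ uniformly for $V_0$ and $V_{m-1}$ one must feed Lemmas~\ref{q} and~\ref{s} carefully enough that a repeated label near the boundary is pushed into an adjacent interior column before the star lemmas are invoked; verifying that these forced recurrences exist, and that the relevant shortest paths can be confined to a three-column strip (so the $P_3\times C_n\leadsto P_3\times P_{2n+1}$ reduction is legitimate), is the delicate part. A secondary, more organizational difficulty is separating the tight small cases $N\le 9$ (which genuinely need Lemma~\ref{1}) from the larger ones, where the $\Z_N$-counting carries the argument.
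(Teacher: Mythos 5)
Your proposal has the same skeleton as the paper's own proof: pass to one component, note that a fixed-path-coordinate row $V_j$ has $N>5$ vertices so some label among $0,\dots,4$ repeats, and then eliminate the even intra-row distances $2$, $4$ and $8$ using the definition of $L(1,1)$-labeling, Lemma \ref{r} (with Lemma \ref{s} at the boundary) and Lemma \ref{1}. The genuine difference is that you finish the argument where the paper does not: the paper concludes ``thus $d(v_a,v_b)=6$'' immediately after ruling out $2$, $4$ and $8$, which is only legitimate when the row is short enough ($N\le 9$, i.e.\ $n\le 18$ for $n$ even, $n=9$ for $n$ odd) that no larger even distance can occur between two row vertices; for $n=22,24,\dots$ and for odd $n\ge 11$ the row contains pairs at distance $10$ or more and the paper's proof is silent about them. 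Your explicit computation $d(y_a,y_b)=2\cdot\mathrm{circdist}_{\Z_N}(a,b)$ (which is correct, boundary bouncing included) together with the packing count --- a colour class avoiding circular distances $1,2,3,4$ in $\Z_N$ has at most $\lfloor N/5\rfloor$ elements, and $5\lfloor N/5\rfloor<N$ because $5\nmid N$, which is exactly where $n\not\equiv 0\bmod 5$ enters --- closes precisely this gap, so on this point your argument is strictly more complete than the source. The one place where your write-up falls short of a full proof is the one you flag yourself: for the rows $V_0$ and $V_{m-1}$ the vertices are not centres of stars and are not the middle row of a $P_3$-strip, so Lemmas \ref{r} and \ref{1} do not apply verbatim, and the ``force the recurrence into an adjacent interior row'' step is only sketched (the distance-$4$ case is genuinely covered by Lemma \ref{s}, but the distance-$8$ case at the boundary still needs a worked argument). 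You should be aware, though, that the paper applies Lemmas \ref{r} and \ref{1} to an arbitrary row without acknowledging this issue at all, so even at its weakest point your proposal is no less rigorous than the published proof.
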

\begin{proof} Let $G=P_m \times C_n$. 
Suppose, without loss of generality, that $n$ is even since by Remark \ref{t}(iii), if $n$ is odd then G is equivalent to one of the two components of $P_m \times C_{2n} $. Let $G'$ be the connected component of $G$. Let $V'_j \subset V(G')$ such that $V'_j \subset V_j.$ Let $v_a \in V'_j$ such that $l(v_a)=\alpha_k \in [4]$. Since $n$ is not a multiple of $5$, and $n \geq 9$, then $\left|V'_j\right|= \frac{n}{2} > 5$. Since $\lambda_1^1(G)=4,$ then there exists at least some vertex $v_b \in V'_j$ such that $l(v_b)= \alpha_k.$ By the definition of $L(1,1)-$ labeling, $d(v_a,v_b)\neq 2$. Likewise by Lemmas \ref{r} and \ref{1}, $d(v_a, v_b) \notin \left\{4,8\right\} $. Thus, $d(v_a,v_b)=6$.
\end{proof}
\begin{thm} \label{a3} Let $m \geq 5$, $n \not\equiv 0\mod  5$ and $n \geq 9.$ Then, $\lambda_1^1 (P_m \times C_n) \geq 5.$
\end{thm}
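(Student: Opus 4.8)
The plan is to argue by contradiction: suppose that $\lambda_1^1(P_m\times C_n)=4$ for some $m\ge5$, $n\not\equiv 0\pmod5$, $n\ge9$, and fix a $4$-$L(1,1)$-labeling $l$. First I would normalize the setting: by Remark \ref{t}(iii) it suffices to work inside a connected component $G'$ (replacing $C_n$ by $C_{2n}$ when $n$ is odd), and by Remark \ref{t}(iv) I would view $G'$ as the connected component of a product of paths with the last path-coordinate identified cyclically, so that the rows $V_0,\dots,V_{m-1}$ all carry $l$ and Lemma \ref{a2} applies verbatim: in every row there is a monochromatic pair at graph-distance exactly $6$, i.e.\ a label that repeats with a ``jump of $6$'' in the cyclic coordinate. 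Since $m\ge5$ we have at least five consecutive rows available, which is the whole point of the hypothesis $m\ge5$ — for narrower strips (compare the preceding theorems, which give $\lambda_1^1(P_4\times C_n)=4$ for all $n\ge9$) there is genuinely more freedom and the bound fails.

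The heart of the argument is to upgrade these local period-$6$ repetitions into a rigid global pattern. Starting from a repeat $l(u_iv_j)=l(u_iv_{j+6})=\alpha$ in an interior row, I would use Lemma \ref{q} (the closest equal label sits at distance $3$ or $4$), Lemma \ref{r} (centers of neighboring copies of $S_4$ at distance $4$ receive distinct labels), Lemma \ref{s} (the two ends of a height-$4$ double-star window avoid the two labels at its other end) and Lemma \ref{1} (equal labels in a width-$3$ component are never at distance $8$) to show that, on a strip of width $5$, the labeling is forced — up to the obvious symmetries of recoloring and reflection — to agree with the diagonal coloring $l(u_av_b)=\lfloor (a+3b)/2\rfloor \bmod 5$ from the proof of Proposition \ref{l}. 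Concretely I would first promote Lemma \ref{a2} to the statement that within each row the equal-label pattern is actually $6$-periodic (not merely that one such pair exists), then show that two horizontally adjacent $6$-periodic rows can be offset only in the one way compatible with the $L(1,1)$ constraints linking them, and then that five stacked rows with these offsets already force all five labels of $[4]$ to appear along any non-wrapping segment. Once the labeling is pinned to (a symmetry image of) the diagonal pattern and this pattern is propagated all the way around the cyclic coordinate, tracing it from coordinate $0$ back to coordinate $n$ at the vertex $u_0v_0=u_0v_n$ shows that the pattern has accumulated a shift of $3\cdot(n/2)$ (in the $C_{2n}$ reduction, of $3n$) modulo $5$; for $l$ to be well-defined on $C_n$ this shift must vanish mod $5$, forcing $5\mid n$. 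Since $n\not\equiv 0\pmod5$ this is a contradiction, so no $4$-$L(1,1)$-labeling exists and $\lambda_1^1(P_m\times C_n)\ge5$.

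I expect the rigidity step to be the main obstacle: it is a finite but delicate propagation argument, of the same flavor as the propagation ``$L(U_a)=L(U_{a+4})$''-type bookkeeping in the proof of Theorem \ref{viii} and the window analysis in Theorem \ref{a1}, and one has to be careful at the two ends $u_0,u_{m-1}$ of $P_m$ (where a vertex is not a star center, so Lemma \ref{r} does not directly apply) and about the parity/indexing shifts introduced by the $C_{2n}$ reduction when $n$ is odd. The cleanest organization is probably to isolate a single lemma of the form ``a width-$5$ strip admitting a $4$-$L(1,1)$-labeling that satisfies the conclusions of Lemmas \ref{r}, \ref{1} and \ref{a2} in every row is, up to symmetry, a substrip of the diagonal coloring,'' prove that lemma by induction on the length of the strip, and then feed it into the wrap-around computation above.
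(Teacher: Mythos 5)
There is a genuine gap: your entire argument funnels through an unproven rigidity claim --- that any $4$-$L(1,1)$-labeling of a width-$5$ strip satisfying Lemmas \ref{q}, \ref{r}, \ref{1} and \ref{a2} must coincide, up to symmetry, with the diagonal coloring $\lfloor(a+3b)/2\rfloor \bmod 5$ --- and you explicitly defer its proof (``I expect the rigidity step to be the main obstacle''). That claim is far stronger than anything the cited lemmas give you directly: Lemma \ref{a2} only produces \emph{one} monochromatic pair at distance $6$ per row, not $6$-periodicity of the whole row, and promoting one pair to a global period, then pinning down the offset between adjacent rows, is precisely where all the work lies. Without that lemma the wrap-around shift computation has nothing to act on, so as written the proposal is a plan rather than a proof. (You would also need to be careful that the ``shift'' is even well-defined: a priori the symmetry relating the labeling to the diagonal pattern could change as you move along the cyclic coordinate.)

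The paper avoids global rigidity entirely with a short local domination argument, and this is the idea your proposal is missing. Take the distance-$6$ monochromatic pair $u_{i+2}v_{j-2},u_{i+2}v_{j+4}$ with common label $\alpha$ supplied by Lemma \ref{a2} in row $i+2$. By Lemma \ref{q} a further occurrence of $\alpha$ must sit at $u_{i+1}v_{j+1}$ or $u_{i+3}v_{j+1}$ (distance $3$ from both ends of the pair). Now stack two stars $S'_4$ (centered at $u_{i+1}v_{j+1}$, spanning rows $i$ to $i+2$) and $S''_4$ (centered at $u_{i+3}v_{j+1}$, spanning rows $i+2$ to $i+4$); this is where $m\ge 5$ enters. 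Whichever case occurs, every vertex of one of these two stars lies within distance $2$ of one of the three vertices labeled $\alpha$, so that star cannot contain the label $\alpha$ at all --- contradicting the fact that a $K_{1,4}$ under a $4$-$L(1,1)$-labeling must use all five labels of $[4]$. No propagation, no classification of the labeling, and no monodromy computation around $C_n$ is needed; the hypothesis $n\not\equiv 0\bmod 5$ is consumed entirely inside Lemma \ref{a2} (it guarantees a row cannot be $5$-periodic, forcing the distance-$6$ repeat). I would recommend replacing your rigidity-plus-wraparound plan with this three-vertex domination argument, or else committing to actually proving the rigidity lemma, which is a substantially harder task than the theorem itself.
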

\begin{proof} Let $m \geq 5, \; n \not \equiv 0 \mod 5$ and $n \geq 9$. Suppose $\lambda^1_1(P_m \times C_n)=4$. Let $G=P_m \times C_n.$ Suppose $n$ is even. Then there exists $G'$, a connected component of $P_m \times C_n$. (If $n$ is odd, we know from an earlier result that $G$ is a connected component of $P_m \times C_{2n}$.) We defined an arbitrary vertex set $V(G'')=\left\{u_iv_j,u_iv_{j+2},u_{i+1}v_{j+1},u_{i+2}v_j,u_{i+2}v_{j+2},u_{i+3}v_{j+1},u_{i+4}v_j,u_{i+4}v_{j+2}\right\}$, with $V(G'')$ $\subset$ $V(G')$. Clearly, $V(G'')$ induces a subgraph $G''$ of $G$'such that $G''=S'_4 \cup S''_4$ where $S'_4,S''_4$ are stars with $V(S'_4)=\left\{ u_iv_j,u_iv_{j+2},u_{i+1}v_{j+1},u_{i+2}v_j,u_{i+2}v_{j+2},\right\}$ and \\$S''_4=u_{i+2}v_j,u_{i+2}v_{j+2},u_{i+3}v_{j+i},u_{i+4}v_j,u_{i+4}v_{j+2}$ respectively. Now, by \ref{a2} above, for all $V_i \subset V(G'), 0 \leq i \leq m-2$ there exist at least a vertex pair $v_a,v_b \in V_i$ such that for some $\alpha_i \in L(V_i) \subseteq [4],$ $l(v_a)=l(v_b)=\alpha_i$ and $d(v_a,v_b)=6.$ Suppose $u_{i+2}v_{j-2},u_{i+2}v_{j+4} \in V_{i+2}$ such that $l(u_{i+2}v_{j-2})=l(u_{i+2}v_{j+4})=\alpha_i$. There exist vertices $u_{i+1}v_{j+1} \in V_{i+1}$ and $u_{i+3}v_{j+1} \in V_{i+3}$. By Lemma  \ref{q}, $l(u_{i+1}v_{j+1})=\alpha_i$ or $l(u_{i+3}v_{j+1})= \alpha_i$. Suppose $l(u_{i+1}v_{j+1})= \alpha_i$, then $d(u_a,u_b) \leq 2$ for any $u_a \in V(S''_4)$ and $u_b \in \left\{u_{i+1}v_{j+1},u_{i+2}v_{j-2},u_{i+2}v_{j+4}\right\}.$ Thus there is no such vertex as $u_a \in S'_4$ such that $l(u_a)= \alpha_i \in V(S'_4)$. Likewise, $d(u'_a,u_b) \leq 2$ for any $u'_a \in V(S'_5)$ and $u_b \in \left\{u_{i+3}v_{j+1},u_{i+2}v_{j-2},u_{i+2}v_{j+4}\right\}$. Thus, there exists no vertex $u'_a \in V(S'_5),$ such that $l(u'_a)= \alpha_j \in [4]$ and therefore, a contradiction.
\end{proof}
By the result obtained in Theorem \ref{a3}, we see that the $\lambda_1^1(P_m \times C_n) \geq 5$ for all $m \geq 5$ and $n \geq 9,$ where $n$ is not a multiple of 5. In the subsequent results, we obtain the $ \lambda_1^1-$number for the remaining $P_m \times C_n$ graphs.

\newpage

{\tiny{
\begin{center}
\pgfdeclarelayer{nodelayer}
\pgfdeclarelayer{edgelayer}
\pgfsetlayers{nodelayer,edgelayer}

\end{center}
}}
\begin{thm} Let $k \in A$. For all $k,m',n'$, $\lambda_1^1(C_{10m'} \times C_{k+10n'})=5$, where $m'$ is any positive integer, $n'$  a non-negative integer and $A=\left\{12,14,16,18\right\}$.
\end{thm}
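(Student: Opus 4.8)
The plan is to prove the two inequalities $\lambda_1^1(C_{10m'}\times C_{k+10n'})\ge 5$ and $\lambda_1^1(C_{10m'}\times C_{k+10n'})\le 5$ separately. Throughout write $N=k+10n'$; for every $k\in A$ one has $N\ge 12$, and since $k\bmod 5\in\{1,2,3,4\}$ also $N\not\equiv 0\pmod 5$.

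For the lower bound I would realise $P_5\times C_N$ as a subgraph of $C_{10m'}\times C_N$: since $m'\ge 1$, five consecutive vertices of $C_{10m'}$ span a copy of $P_5$, and the direct product is monotone under passing to a subgraph in a factor. An $L(1,1)$-labeling of a graph restricts to an $L(1,1)$-labeling of each of its subgraphs, because passing to a subgraph can only increase distances and hence only weakens the two constraints; thus $\lambda_1^1$ is monotone under subgraphs and $\lambda_1^1(C_{10m'}\times C_N)\ge\lambda_1^1(P_5\times C_N)$. As $5\ge 5$, $N\ge 9$ and $N\not\equiv 0\pmod 5$, Theorem~\ref{a3} gives $\lambda_1^1(P_5\times C_N)\ge 5$, which is the desired bound.

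For the upper bound I would first build a $5$-$L(1,1)$-labeling of $C_{10}\times C_N$ (the case $m'=1$) by concatenating blocks along the second factor, in the spirit of the block-combination argument used earlier for $P_4\times C_n$. Figures~2 and~3 furnish $5$-$L(1,1)$-labelings of $C_{10}\times C_{10}$ and of $C_{10}\times C_k$ for each $k\in A$, and these are arranged so that the two end strips of the $C_{10}\times C_k$ pattern (the two consecutive boundary columns at each end --- two suffice since distance-two constraints reach only two columns) agree with the corresponding end strips of the $C_{10}\times C_{10}$ pattern. Granting this, wrap $n'$ successive copies of the $C_{10}\times C_{10}$ block and one copy of the $C_{10}\times C_k$ block around the cycle $C_{10n'+k}$. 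Any two vertices at distance at most two lie in three consecutive columns and so straddle at most one seam; across each seam the relevant width-four column window equals, label for label, a window occurring in one of the original figures, hence is conflict-free; and two vertices bearing the same label are forced to distance at least three because $10$ and $N$ are large. Therefore $\lambda_1^1(C_{10}\times C_N)\le 5$, the case $n'=0$ being the figure for $C_{10}\times C_k$ itself.

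Finally, to raise the first factor from $C_{10}$ to $C_{10m'}$, observe that the labeling of $C_{10}\times C_N$ is, in its first coordinate, a function on $\Z_{10}$; lifting it along the reduction $\Z_{10m'}\to\Z_{10}$ produces a labeling of $C_{10m'}\times C_N$, and this lift is again a valid $L(1,1)$-labeling, since locally the graph is unchanged and any two vertices now sharing a label differ by a nonzero multiple of $10$ in the first coordinate, hence lie at distance at least $10$. (Equivalently, the covering map $C_{10m'}\times C_N\to C_{10}\times C_N$ has fibres $10$ apart, so it induces a homomorphism of squared graphs.) Thus $\lambda_1^1(C_{10m'}\times C_N)\le 5$, and combining with the lower bound gives $\lambda_1^1(C_{10m'}\times C_{k+10n'})=5$. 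The crux --- and essentially the only place real work is needed --- is the compatibility assertion in the upper bound: one must verify against the explicit entries of Figures~2 and~3 that the boundary strips of all four $C_{10}\times C_k$ labelings really do coincide with those of the $C_{10}\times C_{10}$ labeling, so that the blocks interlock without creating a forbidden pair at any seam; this is a finite but somewhat fiddly check, and the figures must have been designed with exactly this compatibility in mind.
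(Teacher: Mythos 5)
Your proposal follows essentially the same route as the paper: the upper bound is obtained exactly as in the paper's proof, by concatenating the explicit $5$-labelings of $C_{10}\times C_{10}$ and $C_{10}\times C_{k}$ from Figures 2 and 3 along the second factor and then repeating along the first, with the seam-compatibility of the figures being the finite check on which everything rests (a check that neither you nor the paper actually carries out). You are more explicit than the paper in two places --- the lower bound via restriction to the subgraph $P_5\times C_N$ combined with Theorem \ref{a3}, and the covering-map lift from $C_{10}$ to $C_{10m'}$ --- but these only flesh out what the paper leaves implicit.
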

\begin{proof} The result follows by combining the $5-$labeling of $C_{10} \times C_{10n'}$ which is obtainable from $n'-$times repeat of Fig.5 a, with the $5-$labeling of $C_{10} \times C_{12}$, $C_{10} \times C_{14}$, $C_{10} \times C_{16}$ and $C_{10} \times C_{18}$ in Fig.5 b  and  of Fig.3 a, b and c respectively along with $C_n$ and then $m'$-copy the resultant graph along with $C_m$.
\end{proof}
\begin{cor} For all $P_m \times C_n$, where $m \geq 5$ and $n \geq 6$, $n \not\equiv 0 \mod 5$ then $\lambda_1^1(P_m \times C_m)=5.$
\end{cor}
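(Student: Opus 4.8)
The plan is to sandwich $\lambda_1^1(P_m\times C_n)$ between the lower bound $5$ supplied by Theorem~\ref{a3} and the upper bound $5$ obtained by embedding $P_m\times C_n$ into one of the tori whose $\lambda_1^1$ was just computed. (The statement should of course read $\lambda_1^1(P_m\times C_n)=5$.) I would first dispose of the small values $n\in\{6,7,8\}$: for these, $\lambda_1^1(P_m\times C_n)=5$ is precisely what the earlier theorems on $P_m\times C_6$, $P_m\times C_7$ and $P_m\times C_8$ assert, and those already cover all $m\ge 3$, hence all $m\ge 5$. So assume from now on $n\ge 9$ and $5\nmid n$; since $m\ge 5$, Theorem~\ref{a3} gives $\lambda_1^1(P_m\times C_n)\ge 5$, and the whole problem reduces to producing a $5$-$L(1,1)$-labeling, i.e.\ to showing $\lambda_1^1(P_m\times C_n)\le 5$.

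For the upper bound I would record the monotonicity used implicitly throughout the paper: if $H$ is a subgraph of $G$, then $\lambda_1^1(H)\le\lambda_1^1(G)$, because any two vertices at distance $1$ or $2$ in $H$ are still at distance $1$ or $2$ in $G$, so the restriction of an optimal labeling of $G$ is a valid labeling of $H$. Since $P_m$ is a subpath of $C_{m'}$ for every $m'\ge m$ and the direct product respects the subgraph relation, $P_m\times C_n\subseteq C_{m'}\times C_n$ for all $m'\ge m$; I would take $m'=10\lceil m/10\rceil$, a positive multiple of $10$ with $m'\ge m$, so that the first factor matches the hypothesis of the preceding theorem.

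It then remains to make the second factor a cycle of length $k+10n'$ with $k\in\{12,14,16,18\}$ and $n'\ge 0$. If $n$ is even, then $n\ge 12$ (because $5\nmid n$) and $n\bmod 10\in\{2,4,6,8\}$, so such a representation exists, whence $\lambda_1^1(C_{m'}\times C_n)=5$ by the preceding theorem and $\lambda_1^1(P_m\times C_n)\le 5$. If $n$ is odd, then by Remark~\ref{t}(iii) $P_m\times C_n$ is isomorphic to a connected component of $P_m\times C_{2n}$, hence to a subgraph of $C_{m'}\times C_{2n}$; here $2n$ is even with $2n\ge 18$, $5\nmid 2n$ and $2n\bmod 10\in\{2,4,6,8\}$, so $\lambda_1^1(C_{m'}\times C_{2n})=5$ by the preceding theorem and again $\lambda_1^1(P_m\times C_n)\le 5$. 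Combining with the lower bound gives $\lambda_1^1(P_m\times C_n)=5$ in all cases.

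I expect the only friction to be organisational rather than analytic: one must verify that the residue classes $\{12,14,16,18\}\pmod{10}$, together with the doubling trick for odd $n$, really capture every admissible $n\ge 9$, and that the values not reached by this reduction are exactly $n=6,7,8$, which have already been settled. All the substantive content — the lower bound of Theorem~\ref{a3} and the explicit $5$-$L(1,1)$-labelings of the tori $C_{10m'}\times C_{k+10n'}$ — is imported, so no new labeling has to be constructed.
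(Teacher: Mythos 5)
Your proposal is correct and follows essentially the same route as the paper: lower bound from Theorem \ref{a3}, upper bound by embedding $P_m\times C_n$ (after doubling the odd cycle via Remark \ref{t}(iii)) into a torus $C_{10m'}\times C_{k+10n'}$ with $k\in\{12,14,16,18\}$ and invoking subgraph monotonicity of $\lambda_1^1$. Your write-up is in fact more complete than the paper's, which leaves the small cases $n\in\{6,7,8\}$, the residue-class bookkeeping, and the monotonicity step implicit.
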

\begin{proof}Let $h$ be a positive even integer with $h \geq 12$. Let $k \in A=\left\{12,14,16,18\right\}$. Then, for all $h,\; h \equiv 0 \mod  k+10n^{\prime} $ for some $k \in A.$ The result thus follows from Remarks \ref{t} (iii) and (iv) and the fact that $P_m \times C_n \subset P_{10m} \times C_n$.
\end{proof}
\section{Conclusion}
The following summarizes the results obtained in this work:

For $G=P_m \times P_n$:
\begin{center}
 $\begin{array}{|c|c|c|} \hline
 m & n & \lambda_1^1(P_m \times P_n)\\ \hline
 2 & 2 & 1\\ \hline
 \geq 3 & 2 & 2\\ \hline
 \geq 3  & \geq 3 & 4\\ \hline

 \end{array}$
 \end{center}
 For $G=P_m \times C_n$:
\begin{center}
 $\begin{array}{|c|c|c|} \hline
 m & n & \lambda_1^1(P_m \times C_n)\\ \hline
 2 & \equiv 0 \mod 3 & 2\\ \hline
 2 & \rm \not\equiv 0 \mod 3 & 3\\ \hline
 \geq 3  & \in \left\{3,4,6,7,8,14\right\} & 5\\ \hline
 \geq 3 & \equiv 0 \mod 5 & 4\\ \hline
  3,4 &  \geq 9,  \neq 14  & 4\\ \hline
 \geq 5  & \geq 9,  \not \equiv 0 \mod 5 & 5\\ \hline
 \end{array}$
 \end{center}


\begin{thebibliography}{99}


\bibitem{BB1}{Bertossi, A.A. and Bonuccelli, M.A. Code Assignment for Hidden Terminal Interference Avoidance in multiloop Pocket Radio Networks. IEEE/ACM Trans. on Networking $3,$ $(4)$\;$1995;441-449$}.
\bibitem{BBB1}{Berttiti, R., Bertossi, A.A. and Bonuccelli, M.A. Assigning Codes in Wireless networks: Bounds and Scaling Properties. Wirel. Netw.,  $5,$ $1999; \;441-449$}.
\bibitem{C2}{Calamonerri, T. The $L(h,k)$-Labeling Problem: A Updated Survey and Annoted Bibliography. The Computer Journal $54(8),$ \;$2011;\; 1344-1371$}.
\bibitem{CPP3}{Calamonerri, T., Pelc, A., and Petreschi, R. Labeling Trees with a Condition at Distance Two. Discrete Math.  $306,$ \;$2006;\; 1534-1539$ }.
\bibitem{CK4}{Chang, G.J. and Kuo, D. The $L(2,1)-$Labeling of Graphs SIAM J. Discrete math.  $9,$\;$1996;\; 309-316$ }.
 \bibitem{CKKL5}{Chang, G.J., Ke, W.-T, Kuo, D. A., Liu, D.D.-F and Yeh, R.K. $(d,1)-$ Labeling of Graphs. Discrete Math.  $220,$ \;$2000;\; 57-66$}.
  \bibitem{CY8}Chiang,S.-H and Yan, J.-H. On $L(d,1)-$labeling of Cartesian product of a cycle and a path. Discrete App. Math $156,$ \;$2008;\; 2867-2881$.
 \bibitem{GMS6}{Georges, J.P, Mauro, D.W and Stein, M.I. Labeling Products of Complete Graphs with a Condition at Distance Two. SIAM J. Discrete Math.  $14,$ \;$2000;\; 28-35$ }.

\bibitem{GM7}Georges, J.P, Mauro, D.W. On Regular Graphs Optimally labeled with a Condition at Distance Two. SIAM j. Discrete Math. $17(2),$ \; $2003;\; 320-331$.
\bibitem{G8}Goncalves, D. $L(p-1)-$Labellings of Graphs. Discrete Math. $308,$ \;$2008;\; 1405-1414$
\bibitem{GKM9}Gravier, S., Klavzer, S. and Mollard M., Codes and $L(2,1)-$Labeling in Sierpinski Graphs. Taiwan. J. math $4,$ \;$2004;\; 671-681$.
\bibitem{GY10}Griggs, J.R., Yeh, R.K. Labeling Graph with a condition at Distance Two. SIAM J. Discrete Math. $5,$ \;$1992;\; 586-595.$
\bibitem{H11}Hale, W.K. Frequency Assignment: Theory and Application. Proc. IEEE $68$ \;$1980;\; 1497-1514$.
\bibitem{JKV13}Jha, P.K, Klavzer, S. and Vessel, A. $L(2,1)-$Labeling of Direct Product of Paths and Cycles. Discrete Applied Math. $145$ \; $(2)$\;$2005;\; 141-159.$
\bibitem{KS14}Kral', D. and Skrekovski, R. A. Theorem About the Channel Assignment J. Discrete Math. $16$\;$(3)$ \;$2003;\; 426-437$.
\bibitem{KY15}Kuo, D. and Yan, J.-H. On $L(2,1)-$Labeling of Cartesian Product of Two Cycles. \\ Discrete Math. $283,$ \;$2004;\; 137-144$.
\bibitem{S16}Sakai, T.D. Chordal Graphs with a Condition at Distance Two. SIAM J. Discrete Math. $7$ \;$1994;\; 133-140$.
\bibitem{SS17}Schwartz, C. and Sakai, T.D. $L(2,1)-$ Labeling of Product of Two Cycles. Discrete Applied Math. $154$ \;$2006;\; 1522-1540$.
\bibitem{SW18}Sopena, E., Wu, J. Coloring the square of the Cartesian product of two Cycles Graphs with a Condition at Distance Two. Discrete Math. $310$ \;$2010;\; 2327-2333$.
\bibitem{WP19}Wensong, D. and Peter C.,L. Distance Two Labeling and Direct Product of Graphs. Discrete Math. $308$ \;$2008;\; 3805-3815$.
\bibitem{IK12}Weichsel, P.M. The Kronecker Product of Graphs. Proc. Amer. Math. Soc. $13$ \; $1962; \; 47-62$
\bibitem{WGM20} Whittlessey, M.A, Georges, J.P. and Mauro, D.W. On the $\lambda-$number of $Q_n$ and Related Graphs. SIAM J. Discrete Math. $8$ \;$1995;\; 499-506$.
\end{thebibliography}
\end{document}